\newcommand{\E}{\ensuremath{\mathbb{E}}}
\newcommand{\V}{\ensuremath{\mathrm{Var}}}
\newcommand{\CV}{\ensuremath{\mathrm{Cov}}}
\newcommand{\Prob}{\ensuremath{\mathbb{P}}}
\newcommand{\R}{\ensuremath{\mathbb{R}}}
\newcommand{\N}{\ensuremath{\mathbb{N}}}
\newcommand{\bo}{\ensuremath{\mathrm{O}}}
\newcommand{\Ind}{\ensuremath{\textbf{1}}}
\newtheorem{thm}{Theorem}[section]
\newtheorem{prop}[thm]{Proposition}
\newtheorem{lem}[thm]{Lemma}
\def\No{{\cal N}}
\begin{document}
\title{\bf A Limit Theorem for Radix Sort and Tries with Markovian Input}
%by Contraction Method}
%OR
%Contaction Method for Markovian Processes: Radix Sort and Tries
%Towards Contraction Method for Markov Processes: Analysis of Radix Sort and Tries
%Radix Sort and Tries with Markovian Input by Contraction Method}
\author{Kevin Leckey and Ralph Neininger\thanks{This author's research was supported by DFG grant Ne 828/2-1.}\\Institute for Mathematics\\
J.W.~Goethe University Frankfurt\\
60054 Frankfurt am Main\\
Germany\\
\{leckey, neiningr\}{@}math.uni-frankfurt.de
   \and Wojciech Szpankowski
\thanks{This author's work was partially done when visiting
J.W.~Goethe University Frankfurt a.M. on the Alexander von Humboldt
research award. This work was also supported by
NSF Center for Science of Information (CSoI) Grant CCF-0939370,
and in addition by NSA Grant 130923, and NSF Grant DMS-0800568,
NIH Grant 1U01CA198941-01, and the MNSW grant DEC-2013/09/B/ST6/02258. 
W. Szpankowski is also with the Faculty of Electronics, Telecommunications and
Informatics,  Gda\'{n}sk University of Technology, Poland.}\\
Department of Computer Science\\
Purdue University\\
W. Lafayette, IN 47907\\
 U.S.A.\\
 spa{@}cs.purdue.edu}

\maketitle

\begin{abstract}
Tries are among the most versatile and widely used data structures on words.
%They are pertinent to (internal) structure of (stored) words
%and several splitting procedures used in diverse contexts
%ranging from document taxonomy to IP addresses lookup, from data
%compression to dynamic hashing, from partial-match queries to
%speech recognition, from leader
%election algorithms to distributed hashing tables and graph compression.
In particular, they are used in  fundamental sorting algorithms such as
radix sort which we study in this paper.
While the performance of radix sort and tries
under a realistic probabilistic model for the generation of words is of
significant importance, its analysis, even for simplest memoryless sources,
has proved difficult.
% Rigorous findings about inherently complex
%parameters  were rarely analyzed
%(with a few notable exceptions)
In this paper we consider a more realistic model where words are generated by a Markov source.
By a novel use of the contraction method combined with
 moment transfer techniques we prove a central limit
theorem for the complexity of radix sort and for the external path length
in a trie.
%The contraction method is arguably the simplest way to establish
%limiting laws for Markov sources.
%The methods described here has further
%applications to other data structures and algorithms.
This is the first application of the
contraction method to the analysis of algorithms and
data structures with Markovian inputs; it relies on the use of systems of stochastic recurrences combined with a product version of the Zolotarev metric.
\end{abstract}

%\noindent
%{\em  AMS 2010 subject classifications.} Primary ; secondary  .\\
%{\em Key words.}

\section{Introduction}

%{\red Noch anzupassen an Radix Sort}

\emph{Tries} are prototype data structures useful for many indexing
and retrieval purposes.
Tries were first proposed by de-la-Briandais in 1959
\cite{de-la-Briandais59} for information processing.
Fredkin in 1960 suggested the current name,
part of the word re\emph{trie}val \cite{Knuth98,Mahmoud92,spa-book}.
They are pertinent to (internal) structure of (stored) words
and several splitting procedures used in diverse contexts
ranging from document taxonomy to IP addresses lookup, from data
compression to dynamic hashing, from partial-match queries to
speech recognition, from leader
election algorithms to distributed hashing tables and graph compression.

Tries are trees whose
nodes are vectors of characters or digits; they are a natural choice
of data structure when the input records involve the notion of
alphabets or digits.  Given a sequence of $n$
binary strings, we
construct a trie as follows. If $n=0$ then the trie is empty. If
$n=1$ then a single external node holding the word is allocated.
If $n\ge1$ then the trie consists of a root (i.e., internal) node
directing strings to two subtrees according to the first symbol
of each string, and strings directed to the same subtree recursively generate a trie among themselves, see Figure \ref{fig_trie_radix} and Section \ref{sec:2} for a more formal definition.
The \emph{internal nodes} in tries are branching nodes, used merely to
direct records to each subtrie; the record strings are all stored in
\emph{external nodes}, which are leaves of such tries.

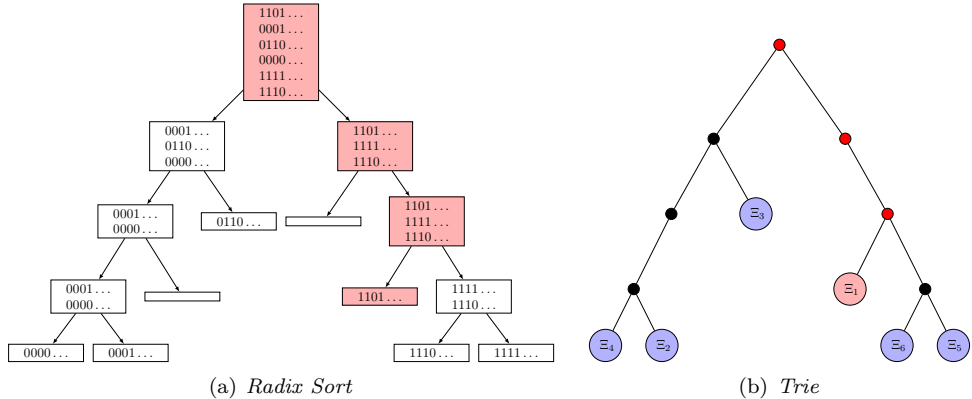
\begin{figure}%[h]
\captionsetup{singlelinecheck=off} %Allows \begin{equation*} in the caption
\begin{center}

\subfigure[\emph{Radix Sort}]{
\scalebox{0.5}{
\begin{tikzpicture}[edge from parent/.style={draw,-latex}]
\tikzstyle{level 0}=[level distance=1cm]
\tikzstyle{level 1}=[sibling distance=50mm,  level distance=2.5cm]
\tikzstyle{level 2}=[sibling distance=27.5mm,  level distance=2cm]
\tikzstyle{level 3}=[sibling distance=25mm, level distance=2cm]
\tikzstyle{level 4}=[sibling distance=22.5mm, level distance=1.5cm]
\node[fill=red!30,draw,text width=5em, text centered]{$1101\ldots$ \\ $0001\ldots$ \\ $0110\ldots$ \\ $0000\ldots$ \\ $1111\ldots$ \\$1110\ldots$}
  child{node[draw,text width=5em, text centered]{$0001\ldots$ \\ $0110\ldots$ \\ $0000\ldots$ \\}
    child{node[draw,text width=5em, text centered]{$0001\ldots$ \\ $0000\ldots$}
      child{node[draw,text width=5em, text centered]{$0001\ldots$ \\$0000\ldots$}
	child{node[draw,text width=5em, text centered]{$0000\ldots$}
	}
	child{node[draw,text width=5em, text centered]{$0001\ldots$}
	}
      }
      child{node[draw,text width=5em, text centered]{\;}
      }
    }
    child{node[draw,text width=5em, text centered]{$0110\ldots$}
    }
  }
  child{node[fill=red!30,draw,text width=5em, text centered]{$1101\ldots$ \\ $1111\ldots$ \\$1110\ldots$}
    child{node[draw,text width=5em, text centered]{\;}
    }
    child{node[fill=red!30,draw,text width=5em, text centered]{$1101\ldots$ \\ $1111\ldots$ \\$1110\ldots$}
      child{node[fill=red!30,draw,text width=5em, text centered]{$1101\ldots$}
      }
      child{node[draw,text width=5em, text centered]{$1111\ldots$ \\$1110\ldots$}
	child{node[draw,text width=5em, text centered]{$1110\ldots$}
	}
	child{node[draw,text width=5em, text centered]{$1111\ldots$}
	}
      }
    }
  }
;
\end{tikzpicture}
}
}
\subfigure[\emph{Trie}]{
\scalebox{0.5}{
 \begin{tikzpicture}
\tikzstyle{level 1}=[sibling distance=35mm, level distance=2.5cm]
\tikzstyle{level 2}=[sibling distance=22.5mm, level distance=2cm]
\tikzstyle{level 3}=[sibling distance=20mm, level distance=2cm]
\tikzstyle{level 4}=[sibling distance=15mm, level distance=1.5cm]
\tikzstyle{level 5}=[sibling distance=10mm]
\node [fill=red!150,circle,draw,inner sep=3pt] (r){}
    child{node [fill=black!255,circle,draw,inner sep=3pt] (0){}
	child{node [fill=black!255,circle,draw,inner sep=3pt] (00){}
	    child{node [fill=black!255,circle,draw,inner sep=3pt] (000){}
		child{node [fill=blue!30,circle,draw,inner sep=3pt] (0000){ $\;\Xi_4$}
		}
		child{node [fill=blue!30,circle,draw,inner sep=3pt] (0001){ $\;\Xi_2$}
		}
	    }
	    child[fill=none] {edge from parent[draw=none]}
	}
	child{node [fill=blue!30,circle,draw,inner sep=3pt] (01) { $\;\Xi_3$}
	}
    }
    child{node [fill=red!150,circle,draw,inner sep=3pt] (1){}
	child[fill=none] {edge from parent[draw=none]}
	child{node [fill=red!150,circle,draw,inner sep=3pt] (11){}
	    child{node [fill=red!30,circle,draw,inner sep=3pt] (110) { $\;\Xi_1$}
	    }
	    child{node [fill=black!255,circle,draw,inner sep=3pt] (111){}
		child{node [fill=blue!30,circle,draw,inner sep=3pt] (1110) { $\;\Xi_6$}
		}
		child{node [fill=blue!30,circle,draw,inner sep=3pt] (1111) { $\;\Xi_5$}
		}
	    }
	}
    }
;
%\path (r) -- (0) node [midway,fill=white!0,inner sep=2pt] {0};
%\path (0) -- (00) node [midway,fill=white!0,inner sep=2pt] {0};
%\path (00) -- (000) node [midway,fill=white!0,inner sep=2pt] {0};
%\path (000) -- (0000) node [midway,fill=white!0,inner sep=2pt] {0};
%\path (11) -- (110) node [midway,fill=white!0,inner sep=2pt] {0};
%\path (111) -- (1110) node [midway,fill=white!0,inner sep=2pt] {0};

%\path (r) -- (1) node [midway,fill=white!0,inner sep=2pt] {1};
%\path (0) -- (01) node [midway,fill=white!0,inner sep=2pt] {1};
%\path (000) -- (0001) node [midway,fill=white!0,inner sep=2pt] {1};
%\path (1) -- (11) node [midway,fill=white!0,inner sep=2pt] {1};
%\path (11) -- (111) node [midway,fill=white!0,inner sep=2pt] {1};
%\path (111) -- (1111) node [midway,fill=white!0,inner sep=2pt] {1};
 \end{tikzpicture}
}
}
\end{center}
\caption[Radix sort and a trie.]{Radix sort and a trie applied to the 
strings: $\Xi_1=1101\ldots,\, \Xi_2=0001\ldots,\, \Xi_3=0110\ldots,\, 
\Xi_4=0000\ldots,\, \Xi_5=1111\ldots,\, \Xi_6=1110\ldots$
Note that radix sort places $\Xi_1$ into three sublists, also called buckets, (and has to read the first three symbols of $\Xi_1$)
whereas the node storing $\Xi_1$ has depth three in the corresponding trie.}
\label{fig_trie_radix}
\end{figure}

Tries can be used in many fundamental algorithms, in particular for sorting
known as radix sort or more precisely most significant digit radix sort
\cite{Knuth98}.  In this cases, the $n$ strings are binary 
representations of keys to be sorted.  They are inserted in a trie 
as described above. A so-called depth-first traversal of
the trie starting at the root node will visit each key in sorted order.
In other words, keys that start with a $0$ are moved to the left subtree
also called a left bucket, while the other keys are stored in the right
subtree or right bucket. In the sequel, we sort keys in the left and the
right buckets using the second symbol, an so on as shown in
Figure \ref{fig_trie_radix}(a).
A {\em  recursive} description of the radix sort algorithm
is presented in Section \ref{sec:2}. In this paper,
we shall use the trie and radix sort paradigms exchangeably.
The complexity of such radix sort is equal to the external path length of
the associated tries, that is, the sum of the lengths of the paths
from the root to all external nodes.

%An alternative, {\em  recursive} description of the radix sort algorithms 
%that does not draw back to tries  is given in Section \ref{sec:2}. In this paper,
%we shall use the trie and radix sort paradigms exchangeably. 
%The complexity of such radix sort is equal to the external path length of
%the associated tries, that is, the sum of the lengths of the paths 
%from the root to all external nodes.

We study the limit law of the radix sort complexity and
the external path length of a trie
built over $n$ binary strings generated by a Markov source.
More precisely, we assume
that the input is a sequence of $n$ independent and identically
distributed random strings, each being composed of an infinite
sequence of symbols such that the next symbol depends on the previous
one and this dependence is governed by a given transition matrix
(i.e., Markov model).
%The corresponding trie constructed from these $n$
%bit-strings is called a \emph{random trie}.

Digital trees, in particular, tries have been intensively studied for the
last thirty years
\cite{clflva01,de84,jare88,jasz95,jaszta01,kiprsz89,de02,de05, Knuth98,Mahmoud92,spa-book},
mostly under Bernoulli (memoryless)
model assumption.
The typical depth under the Markov model was analyzed in \cite{jaszta01},
however, not the external path length. The external path length is more challenging due to
stronger dependency, see \cite{spa-book}. In fact, this is already observed for tries under the 
Bernoulli model \cite{spa-book}. In this paper we establish a central limit theorem for the
external path length in a trie built over a Markov model using a novel
use of the {\it contraction method}.

The contraction method
was introduced in 1991 by Uwe R\"osler \cite{Ro91} for the
distributional analysis of the complexity of the Quicksort algorithm. It was then developed independently by R\"osler and by Rachev and R\"uschendorf \cite{RaRu95} in the early 1990's.
Over the last 20 years this approach, which is based on exploiting an
underlying contracting map on a space of probability distributions,
has been developed as a fairly universal tool for the analysis of
recursive algorithms and data structures. Here, randomness may come
from a stochastic model for the input or from randomization within the
algorithms itself (randomized algorithms). General  developments of this
method were presented in
\cite{Ro92,RaRu95,Ro99,NeRu04,NeRu04b,DrJaNe08,JaNe08,NeSu12}
with numerous applications in computer science, information theory,
and networking.

The contraction method has been used in the analysis of tries and
other data structures only under the
symmetric Bernoulli model (unbiased memoryless source)
\cite[Section 5.3.2]{NeRu04},
where limit laws for the size and the external path length of tries were
re-derived. The application of the method there was heavily based on the fact
that precise expansions of the expectations were available, in particular
smoothness properties of periodic functions appearing in the linear terms
as well as  bounds on error terms which were $\bo(1)$ for the size and
$\bo(\log n)$ for the path lengths. It should be observed that
even in the asymmetric Bernoulli model such error terms seem to be out of
reach for classical analytic methods; see the discussion in
Flajolet, Roux, and Vall{\'e}e \cite{flrova10}. Hence,
for the more general Markov source model considered in the present paper
we develop a novel use of the contraction method.

Furthermore, the contraction method
applied to Markov sources hits another snag, namely,
the Markov model is not preserved when decomposing the trie at its root into
its left and right subtree. The initial distribution of the
Markov source is changed when looking at these subtrees.
To overcome these problems a couple of new ideas are used for
setting up the contraction method: First of all, we will use a system of
distributional recursive equations, one for each subtree.
We then apply the contraction method to this system of recurrences capturing the
 subtree processes and prove asymptotic 
normality for   the path lengths conditioned on the initial distribution.
In fact, our approach avoids dealing with multivariate recurrences and instead
we reduce the whole analysis to a system of  one-dimensional equations.
To come up with an appropriate contracting map we use a product version of the Zolotarev metric.

We also need asymptotic expansions of the mean and the variance for applying the
contraction method. In contrast to very precise information
on periodicities of linear terms for the symmetric Bernoulli model
mentioned above and in view of the results in \cite{flrova10} mentioned above we cannot expect to obtain similarly precise expansions. In fact, our convergence proof does only require the leading
order term together with a Lipschitz continuity
property for the error term.
The lack of a precise expansion is compensated by this Lipschitz continuity combined with a self-centering argument to obtain sufficiently tight  control on error terms.

For the derivation of such an expansions of the mean (and the variance) we use moment transfer theorems. Such theorems were largely developed by H.-K.~Hwang, see, e.g., \cite{hw03,fuhwza10,fuhwza14,bahw98}, for the control of moments related to one-dimensional recurrences. We extend such theorems to systems of recurrences as they occur for the analysis of our Markov model. For the expansion of the variance we also make use of a construction due to Schachinger \cite{Schach95}.

This is the first application of
the contraction method to the analysis of algorithms and data
structures with Markovian inputs. Our results  were announced in the extended abstract \cite{LeNeSz13}.
The methodology developed is general enough to
cover related quantities and structures as well.
Our approach also applies with  minor adjustments at least to the path lengths of digital search trees and PATRICIA tries under
the Markov source model, see the dissertation of the first mentioned author \cite{Leckey2015}.

The Markov source model is more realistic and more flexible than the (memoryless) Bernoulli model. Even more general models have been analyzed in the context of tries.  
Vall{\'e}e \cite{vallee2001} introduced the dynamical source models which, in particular, cover the Markov model. The analysis of dynamical sources for tries started
with the work of Cl{\'e}ment, Flajolet and Vall{\'e}e in \cite{clflva01}, including the asymptotic of the expectation of several trie parameters such as height,
size and the depth/external path length. There is a limit theorem for the depth in tries for special (so-called tame)  dynamical sources, see   \cite{CeVa15},
and a limit theorem for the depth in the (closely related) digital search tree for two types of general sources, see \cite{HuVa14}. However, a
limit theorem for the external path length in tries and the complexity of radix sort has not yet been derived for dynamical sources.
\\
%We should cite here Kevin thesis.

\noindent
{\bf Notations:}
Throughout this paper we use the Bachmann--Landau symbols, in particular
the big $\bo$ notation. We declare $x\log x:=0$ for $x=0$.
By $B(n,p)$ with $n\in\N$ and $p\in [0,1]$ the binomial distribution
is denoted, by $B(p)$ the Bernoulli distribution with success
probability $p$, by ${\cal N}(0,\sigma^2)$ the centered normal
distribution with variance $\sigma^2>0$. We use $C$ as a generic
constant that may change from one occurrence to another.

\section{Main Results}\label{sec:2}

In this section we first describe succinctly the radix sort and his relation
to tries. Then we present our probabilistic model, and
the main result of this paper. \\

\noindent
{\bf Radix sort}. Given $n$ keys represented by binary strings,
we can sort them in the following way. We first split them according to
the first bit: those string starting with a $0$ go to the left bucket,
while the others to the right bucket. In each bucket we sort remaining strings
in the same manner using the second bit. And so on. At the end we
read all keys  from left to right and
all $n$ keys are sorted, see Figure \ref{fig_trie_radix}. 
This is called a radix sort \cite{Knuth98}.
 The number of inspected bits needed to sort such $n$ keys (strings)
 is denoted by $B_n$ and called it in short the number of bucket 
operations. It measures the complexity of radix sort.
We study its limiting distribution in this paper.

It is easy to see that we can achieve the same result by building a trie
from $n$ strings and visit all external nodes in a tree traversal.
Then $B_n$ can be interpreted as the length of the external path length,
that is, the sum of all paths from the root to all external nodes.\\

{\bf The Markov source:} We now define the probabilistic model
for string generation. We shall assume that binary data strings
over the alphabet $\Sigma=\{0,1\}$ are generated by
a homogeneous Markov source.  In general, a homogeneous Markov chain is
given by its initial distribution $\mu=\mu_0 \delta_0 + \mu_1 \delta_1$
on $\Sigma$ and the transition matrix $(p_{ij})_{i,j \in \Sigma}$.
Here, $\delta_x$ denotes the Dirac measure in $x\in \R$.
Hence, the initial state is $0$ with probability $\mu_0$ and $1$
with probability $\mu_1$. We have $\mu_0,\mu_1\in [0,1]$ and $\mu_0+\mu_1=1$.
A transition from state $i$ to $j$ happens with probability
$p_{ij}$, $i,j\in \Sigma$. Now, a data string is generated as the
sequence of states visited by the Markov chain.
In the Markov source model assumed subsequently
all data strings are independent
and  identically distributed according to the given Markov chain.

We always assume that $p_{ij}>0$ for all $i,j\in \Sigma$.
Hence, the Markov chain is ergodic and has a stationary distribution,
denoted by $\pi=\pi_0 \delta_0 + \pi_1 \delta_1$. We have
\begin{align} \label{stat_dist}
\pi_0=\frac{p_{10}}{p_{01}+p_{10}},\qquad \pi_1=\frac{p_{01}}{p_{01}+p_{10}}.
\end{align}
Note however, that our Markov source model does not require
the Markov chain to start in its stationary distribution.

The case $p_{ij}=1/2$ for all $i,j\in\Sigma$ is essentially the
symmetric Bernoulli model (only the first bit may have a different
initial distribution). The symmetric Bernoulli model has already
been studied thoroughly also with respect to the external path
length of tries;
%(which is distributed as the number of bucket operations
%performed by radix sort),
see \cite{kiprsz89,NeRu04}.
Hence, we exclude this case subsequently.
For later reference, we summarize our  conditions as:
\begin{align} \label{cond_prob}
p_{ij}\in (0,1) \mbox{ for all } i,j\in\Sigma,
\qquad p_{ij}\neq \frac{1}{2} \mbox{ for some } (i,j)\in\Sigma^2.
\end{align}

The entropy rate of the Markov chain plays an important role in the asymptotic
behavior of the performance of radix sort.
In particular, it determines the leading order constant of
the average number of bucket operations (path length) performed by radix sort.
The entropy rate for our Markov chain is given by
\begin{eqnarray}\label{def_ent}
H:= -\sum_{i,j\in \Sigma} \pi_i\, p_{ij} \log p_{ij}=\sum_{i\in \Sigma} \pi_i H_i,
\end{eqnarray}
where $H_i:=-\sum_{j\in\Sigma} p_{ij} \log p_{ij}$ is the
entropy of a transition from state $i$ to the next state.
Thus, $H$ is obtained as weighted average of  the entropies of
all possible transitions  with weights according to the
stationary distribution $\pi$. \\

Our main result concerning the distribution of the number of bucket operations
in radix sort or the path length in a trie is presented next. We will write $B_n^\mu$ for $B_n$ to make its dependence on the initial distribution explicit.

\begin{thm}\label{main_thm}
The number $B_n^\mu$ of bucket operations under the Markov source model with conditions \eqref{cond_prob} satisfies, as $n\rightarrow\infty$,
$$\E\left[B_n^\mu\right]=\frac 1 H n \log n +\bo(n),\qquad \V\left(B_n^\mu\right)=\sigma^2 n \log n + \bo \left(n \sqrt{\log n}\right)$$
where the entropy rate $H$ is defined in (\ref{def_ent}) and $\sigma^2$ is given by
\begin{align*}
\sigma^2=\frac{\pi_0 p_{00}p_{01}}{H^3} \left( \log (p_{00}/p_{01})+ \frac {H_1-H_0}{p_{01}+p_{10}}\right)^2+\frac{\pi_1 p_{10}p_{11}}{H^3}\left( \log (p_{10}/p_{11})+ \frac {H_1-H_0}{p_{01}+p_{10}}\right)^2.
\end{align*}
Moreover, as $n\rightarrow\infty$,
\begin{align*}
 \frac{B_n^\mu-\E[B_n^\mu]}{\sqrt{\V(B_n^\mu)}}\stackrel{d}{\longrightarrow}\mathcal{N}(0,1)
\end{align*}
where $\mathcal{N}(0,1)$ denotes a random variable with the standard normal distribution.
\end{thm}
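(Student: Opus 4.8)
The plan is to set up a \emph{system} of distributional recurrences, one for each starting state $i\in\Sigma$, and then apply the contraction method to that system using a product version of the Zolotarev metric. Let $B_n^i$ denote the path length when the underlying Markov source starts in state $i$ (so that $B_n^\mu = \mu_0 B_n^0 + \mu_1 B_n^1$ in distribution, mixed appropriately). Splitting the trie at the root, a string starting in state $i$ moves to the ``$0$''-subtree with probability $p_{i0}$ and to the ``$1$''-subtree with probability $p_{i1}$; crucially, \emph{within a subtree the source has again a Markov distribution}, but now started from state $0$ or state $1$ respectively, according to which branch was taken. Hence if $K_n^i\sim B(n,p_{i0})$ counts the strings sent left, we obtain a recurrence of the schematic form
\begin{align*}
B_n^i \stackrel{d}{=} B_{K_n^i}^{0,(i)} + \widetilde{B}_{n-K_n^i}^{1,(i)} + n,
\end{align*}
where on the right-hand side the two families are independent copies (independent of $K_n^i$) of the path-length families started in state $0$ and state $1$. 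The additive toll $n$ is the number of bits inspected at the root level. This is the system of one-dimensional recurrences referred to in the introduction; the point is that it closes among the two quantities $B_n^0, B_n^1$ and never needs to be made multivariate.

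Next I would normalize. Using the first-order mean expansion $\E[B_n^i]=\frac1H n\log n + \bo(n)$ (with the same leading constant for both starting states, since $H$ is a property of the chain, not of the initial state) and the variance expansion $\V(B_n^i)=\sigma^2 n\log n + \bo(n\sqrt{\log n})$ from the theorem, define
\begin{align*}
Y_n^i := \frac{B_n^i - \E[B_n^i]}{\sqrt{n\log n}}.
\end{align*}
Substituting into the recurrence and collecting terms, the $n$ toll and the discrepancy between $\E[B_n^i]$ and $\E[B_{K_n^i}^0]+\E[B_{n-K_n^i}^1]$ combine into a random centering term; here the precise form of $\sigma^2$ (with the logarithmic derivatives $\log(p_{i0}/p_{i1})$ and the correction $\frac{H_1-H_0}{p_{01}+p_{10}}$) emerges from the fluctuation of $K_n^i$ around $np_{i0}$ together with the expansion of $\E[B_n^i]$. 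With $K_n^i/n \to p_{i0}$ and a law of large numbers/CLT-type control on $K_n^i$, the normalized recurrence should converge to the fixed-point system
\begin{align*}
Y^i \stackrel{d}{=} \sqrt{p_{i0}}\, Y^0 + \sqrt{p_{i1}}\, \widetilde{Y}^1 + b_i Z_i, \qquad i\in\Sigma,
\end{align*}
with $Y^0,\widetilde Y^1,Z_i$ independent, $Z_i$ standard normal, and constants $b_i$ read off from the above. Because the map is a convex-combination-of-independent-copies plus a Gaussian perturbation, its only fixed point with the right first two moments is centered Gaussian; consistency then forces $Y^i\sim\mathcal N(0,\sigma^2)$ for both $i$, matching the variance in the theorem. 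Since $B_n^\mu$ is a $\mu$-mixture of $B_n^0,B_n^1$ and the limit is the \emph{same} Gaussian for both, the mixture also converges, giving the stated CLT for $B_n^\mu$.

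To make the convergence rigorous I would work in the Zolotarev metric $\zeta_3$ (or a suitable $\zeta_s$), but in a \emph{product} form $\zeta_s^{\otimes} = \max_i \zeta_s(\cdot^{(i)}, \cdot^{(i)})$ on pairs of distributions indexed by the starting state, so that the whole system is contracted simultaneously. The standard scheme applies: show $\zeta_s$ is finite on the relevant class (second moments matching, which is why we need the variance expansion and not just the mean), write the accumulated error $\zeta_s^{\otimes}(\mathcal L(Y_n^\bullet), \mathcal L(Y^\bullet))$ in terms of the one-step map plus the discretization error from replacing $K_n^i/n$, its centering, and the toll by their limits, and close a Gronwall-type induction using that the contraction factor is $\sum_i (\sqrt{p_{i0}})^s + (\sqrt{p_{i1}})^s < 1$ for $s>2$ under condition \eqref{cond_prob}. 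The main obstacle — and the reason the introduction stresses a ``self-centering argument'' — is controlling the error terms: the mean is known only up to $\bo(n)$ and the variance up to $\bo(n\sqrt{\log n})$, so the naive toll in the normalized recurrence does not converge, only the \emph{difference} between the toll and the corresponding mean-discrepancy does, and this cancellation must be extracted using the Lipschitz continuity of the $\bo(n)$ error term in the mean (established via the moment-transfer theorems for systems) together with the fact that we recenter by the exact mean at each scale. Handling this cancellation uniformly in the initial distribution $\mu$, and checking that the accumulated discretization errors are $o(1)$ despite the weak error bounds, is where the bulk of the technical work lies; the identification of the Gaussian fixed point and the computation of $\sigma^2$ are comparatively routine once the framework is in place.
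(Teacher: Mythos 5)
Your overall architecture matches the paper's: a system of recurrences indexed by the starting state, the contraction method with a product Zolotarev metric and contraction factor $\max_{i\in\Sigma}\{p_{i0}^{s/2}+p_{i1}^{s/2}\}<1$ for $s>2$ (note: the maximum over $i$, not a sum over $i$), Lipschitz continuity of the $\bo(n)$ mean error obtained from transfer theorems for the system, and a final transfer to general $\mu$. However, there is a genuine gap at the heart of your argument: the variance expansion $\V(B_n^i)=\sigma^2 n\log n+\bo(n\sqrt{\log n})$ is \emph{part of the statement to be proved}, and you both assume it (``from the theorem'') and suggest that $\sigma^2$ can be read off the fixed point of the normalized recurrence. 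That identification cannot work. Under the $\sqrt{n\log n}$ normalization the toll term $b_i(n)=\bigl(n+\nu_0(I_n^i)+\nu_1(n-I_n^i)-\nu_i(n)\bigr)/\sqrt{n\log n}$ has fluctuations of order $\sqrt{n}$ in the numerator, hence $\|b_i(n)\|_s=\bo(1/\sqrt{\log n})\to 0$; the limiting system is the \emph{degenerate} equation $Y^i\stackrel{d}{=}\sqrt{p_{i0}}\,Y^0+\sqrt{p_{i1}}\,\widetilde Y^1$ with no additive Gaussian, and it is satisfied by $\mathcal N(0,\tau^2)$ for every $\tau\ge 0$. So the fixed point pins down Gaussianity but not the constant $\sigma^2$, which must come from a separate second-moment analysis. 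Moreover, that analysis cannot be run from the expansion $\E[B_n^i]=\frac1H n\log n+\bo(n)$ with Lipschitz error alone: the correction $\frac{H_1-H_0}{p_{01}+p_{10}}$ in $\sigma^2$ is produced by the \emph{difference of the linear terms} of $\E[B_n^0]$ and $\E[B_n^1]$, which the $\bo(n)$ bound does not give you. The paper circumvents this by a Schachinger-type splitting $B_n^i\stackrel{d}{=}X_n^i+Z_n^i$ with tolls engineered so that $\E[X_n^i]$ is \emph{exactly} computable (including the $\frac{H_1-H_0}{(p_{01}+p_{10})H}\,n$ term), derives $\V(X_n^i)=\sigma^2 n\log n+\bo(n)$ from the binomial fluctuation of the conditional means, and then shows $\V(Z_n^i)=\bo(n)$ via poissonization, depoissonization and the transfer lemmas. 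Without this (or an equivalent device), your proof establishes at best a CLT with an unidentified limiting variance and does not prove the stated variance asymptotics at all.

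A second, smaller but real, flaw is the passage to a general initial distribution: $B_n^\mu$ is \emph{not} a $\mu$-mixture of $B_n^0$ and $B_n^1$; it satisfies $B_n^\mu\stackrel{d}{=}B^0_{K_n}+B^1_{n-K_n}+n$ with $K_n\sim B(n,\mu_0)$ independent of the two families. The CLT for $B_n^\mu$ then requires two things your mixture argument skips: convergence of $\bigl(B^0_{K_n}-\nu_0(K_n)+B^1_{n-K_n}-\nu_1(n-K_n)\bigr)/\sqrt{n\log n}$ to $\mathcal N(0,\sigma^2)$ by conditioning on $K_n$ in a concentration window, and the fact that the random centering $\nu_0(K_n)+\nu_1(n-K_n)+n-\E[B_n^\mu]$ is $o_{\Prob}(\sqrt{n\log n})$, which again leans on the Lipschitz property of the mean error terms and a self-centering bound for $K_n\log K_n$. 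Finally, a technical point you should repair: $\zeta_s$ is only finite when the first two moments match exactly, so normalizing by $\sqrt{n\log n}$ (variance only asymptotically $\sigma^2$) makes $\zeta_s(Y_n^i,\mathcal N(0,\sigma^2))$ infinite; one should normalize by the exact standard deviations $\sigma_i(n)$ and compare with an accompanying sequence of the form $\frac{\sigma_0(I_n^i)}{\sigma_i(n)}\No_0+\frac{\sigma_1(n-I_n^i)}{\sigma_i(n)}\No_1+b_i(n)$, as the paper does.
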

The analysis of $B_n^\mu$ is based on a system of recursive distributional
equations discussed in the next section. Section \ref{sec:transfer}
contains some moment-transfer theorems
that are used in the analysis of mean and variance.
These theorems are applied to the analysis of the mean in
section \ref{sec:mean} in order to derive the asymptotic expansion in
Theorem \ref{main_thm} as well as a more detailed study of the remaining
term $f_\mu(n):=\E[B_n^\mu]-n \log n /H$ which is necessary
to obtain the limit law in section \ref{sec:limit}.

The first order asymptotic of $\V(B_n^\mu)$
with uniform error term is derived in section
\ref{sec:var}. It is based on the moment-transfer theorems  from
section \ref{sec:transfer} but requires some additional ideas such
as a splitting of $B_n^\mu$ into a suitable sum and a poissonization argument.

Finally, the limit theorem is establish in section \ref{sec:limit}.
The proof is based on the contraction method.
In fact, the asymptotic analysis of the moments enables us to
apply this technique. It is possible to obtain a more detailed
asymptotic expansion of the mean by analytical techniques
however, without the analysis of the increments in proposition
\ref{thEwLip} the analysis in section \ref{sec:limit}
would require an asymptotic expansion
up to the order of $o(\sqrt{n\log n})$.
%({\red WS: do you mean $o(n\sqrt{\log n})$; I believe analytic method can do
%up to $o(n)$})
It should be pointed out that analytic techniques allows asymptotics
of the mean and the variance up to $o(n)$ \cite{spa-book}.

\section{Recursive Distributional Equations}\label{sec:rec_eq}

We formulate in this section a system of distributional recurrences to
capture the distribution of the number of bucket operations.
Our subsequent analysis is entirely based on these equations.
In the sequel, we phrase our discussion in terms of the radix sort algorithm.

We denote by $B_n^\mu$ the number of bucket operations
(i.e., number of bits inspected by radix sort) performed sorting $n$
data under the Markov source model with initial distribution $\mu$
using the radix sorting algorithm. We have $B_0^\mu=B_1^\mu=0$ for all
initial distributions $\mu$. The transition matrix is given in
advance and suppressed  in the notation. We abbreviate
$B_n^i:=B_n^{\rho_i}$ for $i\in\Sigma$ and
$\rho_i= p_{i0}\delta_0 +p_{i1}\delta_1$. We will study
$B_n^0$ and $B_n^1$. From the asymptotic behavior of these
two sequences we can then directly obtain corresponding results
for $B_n^\mu$ for an arbitrary initial distribution
$\mu=\mu_0 \delta_0 + \mu_1 \delta_1$ as follows:
We denote by $K_n$ the number of data among our $n$ that
start with bit $0$. Then $K_n$ has the binomial $B(n,\mu_0)$ distribution.
In the Markov source model the distribution of the second bit
of every data string that starts with bit $0$ is $\rho_0$.
In particular, for any data string $\Xi=\xi_1\xi_2\ldots$ in
the left bucket (i.e.~$\xi_1=0$) the remaining
suffix $\xi_2\xi_2\ldots$ is generated by a Markov source model with initial
distribution $\rho_0$ and the same transition matrix as the
original source. Similarly,
the remaining suffixes in the right bucket are generated by a
Markov source model with initial distribution $\rho_1$ and
the same transition matrix.
Moreover, by the independence of data strings within the Markov source model,
the number of bucket operations in the left bucket and the number
of bucket operations in the right bucket are independent conditionally on $K_n$.
This leads to the following stochastic recurrence:
\begin{eqnarray} \label{decomp_initial}
B_n^\mu \stackrel{d}{=} B_{K_n}^0 + B_{n-K_n}^1+n, \qquad n\ge 2,
\end{eqnarray}
where $(B_0^0,\ldots,B_n^0)$, $(B_0^1,\ldots,B_n^1)$ and
$K_n$ are independent and $\stackrel{d}{=}$ denotes that
left and right hand side have identical distributions.
We will see later that we can directly transfer asymptotic
results for $B_n^0$ and $B_n^1$ to general $B_n^\mu$ via
(\ref{decomp_initial}), see, e.g., the proof of Theorem \ref{thm:limit}.

In particular, (\ref{decomp_initial}) implies for $\mu=\rho_0$ that
\begin{eqnarray} \label{decomp_zero}
B_n^0 \stackrel{d}{=} B^0_{I_n} + B^1_{n-I_n}+n, \qquad n\ge 2,
\end{eqnarray}
with $(B_0^0,\ldots,B_n^0)$, $(B_0^1,\ldots,B_n^1)$ and $I_n$
independent binomially $B(n,p_{00})$ distributed.
A similar argument yields a recurrence for $B_n^1$.
Denoting by $J_n$ a binomially $B(n,p_{10})$ distributed
random variable, we have
\begin{eqnarray} \label{decomp_one}
B_n^1 \stackrel{d}{=} B^0_{J_n} + B^1_{n-J_n}+n, \qquad n\ge 2,
\end{eqnarray}
with $(B_0^0,\ldots,B_n^0)$, $(B_0^1,\ldots,B_n^1)$ and $J_n$ independent.
Our asymptotic analysis of $B_n^\mu$ is based on the distributional
recurrence system (\ref{decomp_zero})--(\ref{decomp_one})
as well as (\ref{decomp_initial}).

For further references, we abbreviate (\ref{decomp_zero}) and (\ref{decomp_one}) by
\begin{eqnarray}\label{decomp_zero_one}
B_n^i \stackrel{d}{=} B^0_{I_n^i} + B^1_{n-I_n^i}+n, \qquad n\ge 2,\, i\in\Sigma,
\end{eqnarray}
with $(B_0^0,\ldots,B_n^0)$, $(B_0^1,\ldots,B_n^1)$ and $I_n^i$ independent, $I_n^i$ binomial $B(n,p_{i0})$ distributed.

\section{Transfer Theorems for Mean and Variance}\label{sec:transfer}

Throughout this section, let $(a_i(n))_{n\in\N_0}$ and
$(\varepsilon_i(n))_{n\in\N_0}$ be real valued sequences for
$i\in\{0,1\}$. Furthermore, let $I_n^i$
follow the binomial distribution $B(n, p_{i0})$
for $i\in\{0,1\}$. Suppose that these sequences either satisfy
\begin{align}\label{rec_transfer_first_kind}
 a_i(n)&=\E [a_0(I_{n}^i)] + \E [a_{1}(n-I_{n}^i)] + \varepsilon_i(n),\qquad i\in\{0,1\},\, n\in\N,
\end{align}
which is the case for, e.g.,
$a_i(n)=\E[B_n^i]$
and $\varepsilon_i(n)=n \Ind_{[2,\infty)}(n)$, or satisfy
\begin{align}\label{rec_transfer_second_kind}
 a_i(n)&=p_{i0}\E [a_0(I_{n}^i)] + p_{i1}\E [a_{1}(n-I_{n}^i)] + \varepsilon_i(n),\qquad i\in\{0,1\},\, n\in\N,
\end{align}
which is the case for, e.g.,
$a_i(n)=f_i(n+1)-f_i(n)$ where
$f_i(n)=\E[B_n^i] -\frac{1}{H} n \log n$ and $\varepsilon_i(n)=1$.

Upper bounds on $\varepsilon_i(n)$ may be transferred to bounds on $a_i(n)$ by the following lemma:
\begin{lem}
 \label{lem_transf}
Assume that \eqref{rec_transfer_first_kind} holds. Then, $\varepsilon_i(n)=\bo(n^{\alpha})$ for an $\alpha\in\R$ and both $i\in\{0,1\}$ implies, as $n\rightarrow\infty$,
$$
 a_i(n)=\begin{cases} \; \bo(n),& \text{ if } \alpha<1,\\ \; \bo( n^\alpha), &\text{ if } \alpha>1,\\
\;\bo(n \log n ), & \text{ if } \alpha=1. \end{cases}
$$
\end{lem}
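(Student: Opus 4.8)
\emph{Proof (plan).}
The first step is to pass to a non-negative inequality: from $|a_i(n)|\le\E[|a_0(I_n^i)|]+\E[|a_1(n-I_n^i)|]+|\varepsilon_i(n)|$ it is enough to bound $b(n):=\max\{|a_0(n)|,|a_1(n)|\}$, which satisfies $b(n)\le\E[b(I_n^i)]+\E[b(n-I_n^i)]+e(n)$ for both $i\in\{0,1\}$, where $e(n):=\max_i|\varepsilon_i(n)|\le C_0 n^\alpha$ for $n\ge1$ by hypothesis. Put $\gamma:=\max_{i,j}p_{ij}\in(0,1)$. The obstruction to a direct induction is that $I_n^i$ can take the values $0$ or $n$ (with total probability $p_{i0}^n+p_{i1}^n\le2\gamma^n$), which reinstates $b(n)$ on the right, contributing $b(0)+b(n)$ there. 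Isolating that term gives, for all $n$ with $2\gamma^n\le\tfrac12$ (the finitely many smaller $n$ being kept as base cases),
$$b(n)\le(1+4\gamma^n)R(n),\qquad R(n):=\max_{i\in\{0,1\}}\sum_{k=1}^{n-1}\binom nk p_{i0}^k p_{i1}^{n-k}(b(k)+b(n-k))+2\gamma^n b(0)+e(n),$$
and $R(n)$ involves $b(m)$ only for $m<n$, which makes strong induction possible.

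I would propagate the hypothesis $b(n)\le\phi(n)$ for all $n\ge1$, with a large constant $K$ fixed only at the end and
$$\phi(n):=\begin{cases}K\,n^{\alpha}, & \alpha>1,\\ K\,n(\log n+1), & \alpha=1,\\ K\,(n-\tau n^{\alpha'}), & \alpha<1,\end{cases}$$
for a fixed $\tau\in(0,1)$ and a fixed $\alpha'\in(\max\{\alpha,0\},1)$ in the last case. In every case $\phi=K\phi_1$ for a fixed function $\phi_1$ that is strictly positive on $\{1,2,\dots\}$ and of exact order $n^\alpha$, $n\log n$, resp.\ $n$; so the inequality $b(n)\le\phi(n)$ is precisely the assertion of the lemma. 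The correction $-\tau Kn^{\alpha'}$ in the case $\alpha<1$ is the crucial device: the identity map is a fixed point of the averaging in the recurrence, so all the slack that closes the induction must come from this lower-order term, and letting its coefficient scale with $K$ is exactly what keeps every threshold below independent of $K$ (and thereby makes a separate a priori polynomial bound unnecessary).

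The heart of the proof is the one-step estimate obtained by inserting the hypothesis into $R(n)$. Since $\phi(0)=0$ and $\phi>0$ on $\{1,2,\dots\}$, the missing $k\in\{0,n\}$ terms are non-negative, so $\sum_{k=1}^{n-1}\binom nk p_{i0}^k p_{i1}^{n-k}(\phi(k)+\phi(n-k))\le\E[\phi(I_n^i)+\phi(n-I_n^i)]$, and it suffices to show, for $n$ past an absolute threshold and both $i\in\{0,1\}$,
$$\E[\phi(I_n^i)+\phi(n-I_n^i)]\le\begin{cases}\theta\,\phi(n)\ \text{ with a fixed }\theta<1, & \alpha>1,\\ \phi(n)-\tfrac34 hKn\ \text{ with }h:=\min_i H_i>0, & \alpha=1,\\ \phi(n)-\tfrac12\tau\eta Kn^{\alpha'}\ \text{ with }\eta:=\min_i(p_{i0}^{\alpha'}+p_{i1}^{\alpha'})-1>0, & \alpha<1.\end{cases}$$
All three rest on concentration of $I_n^i\sim B(n,p_{i0})$ about $p_{i0}n$ (a Hoeffding/Chernoff tail bound) together with the local regularity of $x\mapsto x^r$ and $x\mapsto x\log x$: for $\alpha>1$ one has $\E[(I_n^i)^\alpha]\le(1+o(1))(p_{i0}n)^\alpha$ and uses $p_{i0}^\alpha+p_{i1}^\alpha<1$ (convexity); for $\alpha<1$ one has $\E[(I_n^i)^{\alpha'}]\ge(1-o(1))(p_{i0}n)^{\alpha'}$ and uses $p_{i0}^{\alpha'}+p_{i1}^{\alpha'}>1$ (concavity); for $\alpha=1$ one has $\E[I_n^i\log I_n^i]\le p_{i0}n\log n+p_{i0}n\log p_{i0}+\bo(n^{2/3}\log n)$, whose first two terms produce a slack of order $n$. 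Every $o(\cdot)$ and $\bo(\cdot)$ appearing here is independent of $K$, so the thresholds at which these estimates hold are absolute.

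To finish, feed the one-step estimate together with $e(n)\le C_0 n^\alpha$ and $2\gamma^n b(0)\le2b(0)$ into $b(n)\le(1+4\gamma^n)R(n)$; a short computation reduces $b(n)\le\phi(n)$ to the requirement that the slack just identified dominate both the toll $C_0 n^\alpha$ and the perturbation $4\gamma^n R(n)$. In each case the slack is at least $\mathrm{const}\cdot K\cdot n^{\max\{\alpha,0\}}\ge\mathrm{const}\cdot K\cdot n^\alpha$, so it beats $C_0 n^\alpha$ once $K$ exceeds a fixed constant; and since the slack and the dominant part of $4\gamma^n R(n)$ carry the common factor $K$, the remaining comparison becomes, after cancelling $K$, an inequality of the shape $\gamma^n\,\mathrm{poly}(\log n)\le\mathrm{const}$ that is $K$-free and holds for all $n$ beyond a fixed threshold $N$. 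For $1\le n<N$ the finitely many $b(n)$ are finite constants, so enlarging $K$ to also satisfy $K\ge\max_{1\le n<N}b(n)/\phi_1(n)$ makes the hypothesis hold there; strong induction then gives $b(n)\le\phi(n)$ for all $n\ge1$, i.e.\ $a_i(n)=\bo(\phi_1(n))$, which is the claim. I expect the main obstacle to be exactly the one-step estimate of the third paragraph — pushing the binomial concentration through $x^\alpha$ for non-integer $\alpha$ and through $x\log x$ with sufficient uniformity, and arranging the constants so that every threshold stays free of $K$, which is what allows the induction to close on its own.
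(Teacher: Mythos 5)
Your plan is correct, and it reaches the three bounds by a route that differs from the paper's in two essential ways. The paper works with the monotone envelope $C(n)=\max_i\sup_{k\le n}|a_i(k)|$ and cuts the expectation at the large event $\{I_n^i\notin[(1-\delta)n,\delta n]\}$ with $\delta>\max_{i,j}p_{ij}$; since this reinstates $C(n)\Prob(I_n^i\notin[(1-\delta)n,\delta n])$ on the right, it first proves a rough a priori bound $C(n)=\bo(K^n)$ and only then absorbs that term into the toll via a Chernoff bound, after which a single induction ansatz $C(n)\le \tilde L\, n\sum_{j\le -\log n/\log\delta}(\delta^{1-\alpha})^j$ yields all three regimes at once from the behaviour of the geometric sum. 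You instead keep $b(n)=\max_i|a_i(n)|$ itself, note that only the atoms $I_n^i\in\{0,n\}$ literally reinstate $b(n)$, divide them out exactly (your factor $1+4\gamma^n$), and close a strong induction with a case-specific test function: a fixed contraction factor from $p_{i0}^{\alpha}+p_{i1}^{\alpha}<1$ for $\alpha>1$, an entropy drift of order $n$ for $\alpha=1$, and for $\alpha<1$ the subtracted concave term $-\tau Kn^{\alpha'}$ exploiting $p_{i0}^{\alpha'}+p_{i1}^{\alpha'}>1$; your observation that the linear function is a fixed point of the split, so that all slack must come from this correction and must scale with $K$, is exactly what makes a preliminary exponential bound unnecessary. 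The trade-off: the paper's geometric-sum ansatz is uniform in $\alpha$ and makes the three answers transparent, at the price of the two-stage bootstrap and the envelope bookkeeping; your drift-function induction is one-pass and more self-contained, but needs three tailored one-step estimates --- all of which are indeed provable by Chernoff concentration plus the local regularity of $x\mapsto x^r$ and $x\mapsto x\log x$, with thresholds free of $K$ as you require (the $\alpha=1$ estimate is essentially the expansion the paper carries out inside the proof of Lemma \ref{transf_lem_first_order}). Two cosmetic points: your recurrence for $b(n)$ holds for the maximizing index (equivalently after taking $\max_i$, which is what your $R(n)$ does), not literally ``for both $i$''; and in the final comparison the $K$-free part of $4\gamma^nR(n)$, namely $4\gamma^n(C_0n^\alpha+2\gamma^nb(0))$, is bounded and gets absorbed once $K$ exceeds a fixed constant, consistent with your bookkeeping.
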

More precisely, the first order asymptotic of linear $\varepsilon_i(n)$ terms yield the following first order asymptotic of $a_i(n)$:
\begin{lem}\label{transf_lem_first_order}
Assume that \eqref{rec_transfer_first_kind} holds. Then, $\varepsilon_i(n)=c_i n +\bo(n^{\alpha})$ for $c_0,c_1\in\R$ and $\alpha<1$ and both $i\in\{0,1\}$
implies that, as $n\to\infty$,
$$a_i(n)= \frac{ \pi_0 c_0+\pi_1 c_1} H n \log n +\bo(n)$$
with constants $\pi_0,\pi_1$ and $H$ given in \eqref{stat_dist} and \eqref{def_ent}.
\end{lem}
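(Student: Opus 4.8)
The plan is to reduce the statement to Lemma~\ref{lem_transf} by subtracting from $a_i(n)$ a first-order ansatz of the shape $\kappa\, n\log n+\beta_i\, n$, tailored so that the remainder again solves a recurrence of the form \eqref{rec_transfer_first_kind} but now with a \emph{sublinear} inhomogeneity. The value of $\kappa$ is not free: it will be pinned down to $(\pi_0c_0+\pi_1c_1)/H$ by the requirement that the linear system for the $\beta_i$ be solvable.

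First I would determine the ansatz. Pretending $a_j(m)=\kappa\, m\log m+\beta_j m$ and using that $I_n^i\sim B(n,p_{i0})$ concentrates around $np_{i0}$, the right-hand side of \eqref{rec_transfer_first_kind} becomes, to leading orders, $\kappa\, n\log n+\kappa\,n(p_{i0}\log p_{i0}+p_{i1}\log p_{i1})+(\beta_0p_{i0}+\beta_1p_{i1})n+c_in$; since $-(p_{i0}\log p_{i0}+p_{i1}\log p_{i1})=H_i$ and $p_{i0}+p_{i1}=1$, matching the coefficient of $n$ gives the $2\times2$ linear system $\beta_i=-\kappa H_i+p_{i0}\beta_0+p_{i1}\beta_1+c_i$, $i\in\Sigma$, i.e.\ $(\mathrm{Id}-P)\beta=c-\kappa h$ with $P=(p_{ij})$, $\beta=(\beta_0,\beta_1)$, $c=(c_0,c_1)$, $h=(H_0,H_1)$. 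The matrix $\mathrm{Id}-P$ is singular with left kernel spanned by the stationary vector $\pi=(\pi_0,\pi_1)$ (as $\pi P=\pi$, cf.\ \eqref{stat_dist}), so by the Fredholm alternative the system is solvable precisely when $\pi\cdot(c-\kappa h)=0$, that is $\pi_0c_0+\pi_1c_1=\kappa(\pi_0H_0+\pi_1H_1)=\kappa H$ using \eqref{def_ent}. I therefore set $\kappa:=(\pi_0c_0+\pi_1c_1)/H$ (well defined since $H>0$) and fix any solution $(\beta_0,\beta_1)$ of the now-consistent system.

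Next I would record the elementary estimate $\E[I_n^i\log I_n^i]=np_{i0}\log(np_{i0})+\bo(1)$, and likewise $\E[(n-I_n^i)\log(n-I_n^i)]=np_{i1}\log(np_{i1})+\bo(1)$ (with the convention $0\log 0=0$), which follows from a second-order Taylor expansion of $x\mapsto x\log x$ about $np_{i0}$, whose remainder is $\bo(\V(I_n^i)/n)=\bo(1)$ on the bulk event, together with a Chernoff bound handling the deviation regime. Then, setting $g_i(n):=a_i(n)-\kappa\,n\log n-\beta_i n$ and substituting $a_j(m)=\kappa\,m\log m+\beta_j m+g_j(m)$ into \eqref{rec_transfer_first_kind}, all $\bo(n\log n)$- and $\bo(n)$-level terms cancel by virtue of the defining relations for $\kappa$ and $\beta_i$, leaving $g_i(n)=\E[g_0(I_n^i)]+\E[g_1(n-I_n^i)]+\tilde\varepsilon_i(n)$ with $\tilde\varepsilon_i(n)=\varepsilon_i(n)-c_i n+\bo(1)=\bo(n^{\max(\alpha,0)})$, whose exponent is $<1$ since $\alpha<1$. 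Lemma~\ref{lem_transf} applied to the pair $(g_0,g_1)$ then yields $g_i(n)=\bo(n)$, hence $a_i(n)=\kappa\,n\log n+\beta_i n+\bo(n)=\frac{\pi_0c_0+\pi_1c_1}{H}\,n\log n+\bo(n)$.

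The conceptual heart of the argument — and the only point where the precise constant emerges — is the solvability (Fredholm) condition in the second step; the sole genuinely technical input is the $\bo(1)$ control of $\E[I_n^i\log I_n^i]$, and the main thing to be careful about is the bookkeeping guaranteeing that, after subtracting the ansatz, the inhomogeneity $\tilde\varepsilon_i$ is truly sublinear rather than merely $\bo(n\log n)$, since only the former lets Lemma~\ref{lem_transf} deliver the $\bo(n)$ remainder.
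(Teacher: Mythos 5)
Your proposal is correct and follows essentially the same route as the paper: subtract an ansatz $\kappa\,n\log n+\beta_i n$ so that the remainder again satisfies \eqref{rec_transfer_first_kind} with a sublinear toll, then conclude $\bo(n)$ via Lemma~\ref{lem_transf}; in fact the $\beta_i$ produced by your Fredholm/solvability condition agree (up to an irrelevant additive multiple of $(1,1)$, the kernel direction) with the paper's explicit correction $-c_{1-i}H_i/((p_{01}+p_{10})H)$, and the forced value $\kappa=(\pi_0c_0+\pi_1c_1)/H$ is exactly the paper's leading constant. The only differences are presentational: the paper writes the linear correction down and verifies it by direct computation rather than deriving it from the singular system, and it bounds the transformed toll via $\E[h(I_n^i/n)]-h(p_{i0})=\bo(n^{-2/3})$ (hence $\bo(n^{\max\{\alpha,1/3\}})$), whereas your second-order Taylor argument gives the sharper $\bo(1/n)$ — either bound is sublinear and suffices.
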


Similarly, there are the following results on transfers for \eqref{rec_transfer_second_kind}:
\begin{lem}
\label{lem_transf_inc}
Assume that \eqref{rec_transfer_second_kind} holds. Then, $\varepsilon_i(n) =\bo( n^\alpha)$ for an $\alpha\in\R$ and both $i\in\{0,1\}$ implies that, as $n\rightarrow\infty$,
$$
 a_i(n)=\begin{cases} \; \bo(1)& \text{ if } \alpha<0,\\ \; \bo( n^\alpha) &\text{ if } \alpha>0,\\
\;\bo( \log n ) & \text{ if } \alpha=0. \end{cases}
$$
\end{lem}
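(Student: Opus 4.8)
\noindent
The plan is to prove all three cases by strong induction on $n$, in close parallel to the first-kind transfer Lemma~\ref{lem_transf}. The one structural difference is that in \eqref{rec_transfer_second_kind} the two ``multiplicities'' $p_{i0},p_{i1}$ sum to $1$ instead of to $2$, which moves the critical exponent down from $\alpha=1$ to $\alpha=0$; accordingly the inductive bounds to be set up are $|a_i(n)|\le C$ for $\alpha<0$, $|a_i(n)|\le C\log n+D$ for $\alpha=0$, and $|a_i(n)|\le C n^{\alpha}$ for $\alpha>0$, with $C,D>0$ chosen at the end. Throughout write $K$ for a common constant with $|\varepsilon_i(n)|\le K n^{\alpha}$ and put $p_{\max}:=\max_{i,j}p_{ij}<1$. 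Two elementary facts carry the computation: for fixed $\alpha\neq0$ and $i\in\{0,1\}$ the number $\gamma_i:=p_{i0}^{\,1+\alpha}+p_{i1}^{\,1+\alpha}$ satisfies $\gamma_i<1$ if $\alpha>0$ and $\gamma_i>1$ if $\alpha<0$ (on $(0,1)$ the map $t\mapsto t^{1+\alpha}$ lies strictly below, resp.\ above, $t\mapsto t$, and $p_{i0}+p_{i1}=1$); and for $\alpha=0$ the transition entropy $H_i=-p_{i0}\log p_{i0}-p_{i1}\log p_{i1}$ is strictly positive under \eqref{cond_prob}.

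The first point is that, because $\Prob(I_n^i=n)=p_{i0}^{\,n}$ and $\Prob(I_n^i=0)=p_{i1}^{\,n}$ are positive, the right side of \eqref{rec_transfer_second_kind} still contains $a_0(n)$ and $a_1(n)$ themselves; isolating these ``diagonal'' contributions gives, with $M_n:=\max_i|a_i(n)|$, an estimate of the shape $M_n\le (1-2p_{\max}^{\,n})^{-1}\max_i R_i(n)$ valid for $n$ large, where
\[ R_i(n)=p_{i0}\,\E\bigl[\,|a_0(I_n^i)|\,\Ind_{1\le I_n^i\le n-1}\bigr]+p_{i1}\,\E\bigl[\,|a_1(n-I_n^i)|\,\Ind_{1\le n-I_n^i\le n-1}\bigr]+|\varepsilon_i(n)|+\bo(p_{\max}^{\,n}). \]
In particular only the values $a_j(m)$ with $1\le m\le n-1$ and, essentially, $m\approx np_{i0}$ enter; every fixed index (among them $a_j(0)$ and $a_j(1)$) appears with an exponentially small coefficient and is swept into the $\bo(p_{\max}^{\,n})$ term.

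The induction step then comes down to a single moment estimate: feeding the inductive bound into $R_i(n)$, what is needed as $n\to\infty$ is
\[ \E\bigl[(I_n^i)^{\alpha}\,\Ind_{1\le I_n^i\le n-1}\bigr]=(np_{i0})^{\alpha}\bigl(1+o(1)\bigr)\ \ (\alpha\neq0),\qquad \E\bigl[\log(I_n^i\vee1)\,\Ind_{1\le I_n^i\le n-1}\bigr]\le\log(np_{i0})+\bo(1)\ \ (\alpha=0). \]
For $\alpha\in[0,1]$ the relevant (upper) bound is one line of Jensen's inequality (concavity of $t\mapsto t^{\alpha}$, resp.\ of $\log$); for $\alpha>1$ the upper bound follows from the binomial expansion $\E[(I_n^i)^{\alpha}]=(np_{i0})^{\alpha}+\bo(n^{\alpha-1})$, valid once $np_{i0}\ge1$; for $\alpha<0$ the matching \emph{lower} bound is obtained by restricting to the concentration window $|I_n^i-np_{i0}|\le\delta np_{i0}$, whose complement has exponentially small probability. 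Substituting, the $a_j$-part of $R_i(n)$ equals, up to $o(\cdot)$ and exponentially small terms, $\gamma_i\,C n^{\alpha}$ for $\alpha>0$ and $C\log n-C H_i+D$ for $\alpha=0$; adding $|\varepsilon_i(n)|\le Kn^{\alpha}$, the inductive bound is reproduced as soon as $C(1-\gamma_i)\ge K$, resp.\ $C H_i\ge K$ — each solvable because $\gamma_i<1$, resp.\ $H_i>0$. The finitely many small $n$ (where the $o(1)$ and $p_{\max}^{\,n}$ errors are not yet negligible) and the fixed data $a_j(0)$ are absorbed by enlarging $C$ and $D$.

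The step I expect to be the real obstacle is the case $\alpha<0$. A flat bound $|a_i(n)|\le C$ does \emph{not} close the recursion: the toll $\varepsilon_i(n)=\bo(n^{\alpha})$ decays only polynomially, whereas a flat bound offers only the exponentially small boundary mass $p_{i0}^{\,n}+p_{i1}^{\,n}$ as ``slack''. The remedy is to sharpen the bound to $|a_i(n)|\le C-C' n^{\alpha}$ (still nonnegative once $C\ge C'$, since $n^{\alpha}\le1$ for $n\ge1$): one step turns the extra room $C' n^{\alpha}$ into $\gamma_i C' n^{\alpha}$ with $\gamma_i>1$, so the surplus $(\gamma_i-1)C' n^{\alpha}$ is precisely what pays for the toll, and the induction closes once $C'(\gamma_i-1)\ge K$. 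Carrying this out cleanly needs the two-sided control of $\E[(I_n^i)^{\alpha}\Ind_{1\le I_n^i\le n-1}]$ above, together with the modest bookkeeping that $C'$ (fixed by $K$ and $\min_i(\gamma_i-1)$) is chosen before $C$ and $D$; everything else is routine.
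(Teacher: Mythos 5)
Your argument is correct, but it follows a genuinely different route from the paper's. The paper also extracts the diagonal/extreme values of $I_n^i$, but it does so by working with the monotone majorants $C_i(n)=\sup_{k\le n}|a_i(k)|$, $C(n)=C_0(n)\vee C_1(n)$: after a rough a priori bound $C(n)=\bo(K^n)$ and a Chernoff estimate, the contribution of $\{I_n^i\notin[(1-\delta)n,\delta n]\}$ is absorbed into the toll, and on the complementary event both subproblem sizes are at most $\delta n$, so using only $p_{i0}+p_{i1}=1$ and monotonicity one gets an iteration of the type $C(n)\le C(\delta n)+\bo(n^\alpha)$, whence $C(n)\le \tilde L\sum_{j\le -\log n/\log\delta}\delta^{-\alpha j}$ and all three regimes ($\alpha<0$, $=0$, $>0$) drop out of one geometric sum. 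You instead run a direct strong induction with case-specific ansaetze, driven by $\gamma_i=p_{i0}^{1+\alpha}+p_{i1}^{1+\alpha}\lessgtr 1$ and $H_i>0$, and your corrected ansatz $C-C'n^{\alpha}$ for $\alpha<0$ is exactly the right fix (you are right that a flat bound cannot pay for a polynomially decaying toll). The trade-off: the paper's argument is shorter, uniform in $\alpha$, and needs no binomial moment asymptotics beyond concentration, while yours is more quantitative (it exhibits the contraction factor $\gamma_i$, and for $\alpha=0$ a constant of order $K/H_i$) at the cost of the moment estimates for $(I_n^i)^\alpha$ and more delicate constant bookkeeping. The one point you should make explicit in a full write-up is the order of choice of constants where the exponentially small diagonal terms multiply quantities proportional to $C$ (or $D$), which are later enlarged for the base cases; this is easily closed, e.g.\ by first recording a crude polynomial (or $\bo(K^n)$, as in the paper's proofs of Lemmas \ref{lem_transf} and \ref{lem_transf_inc}) a priori bound on $\max_i|a_i(n)|$, or by tying $D$ to $C$ so that every error term scales with $C$ and the thresholds become independent of it; also note that your displayed requirement $\E[\log(I_n^i\vee 1)\Ind_{\{1\le I_n^i\le n-1\}}]\le\log(np_{i0})+\bo(1)$ must in fact be used with an $o(1)$ (or at least a constant smaller than $H_i$), which is what Jensen's inequality actually delivers and what your subsequent computation uses.
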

\begin{lem}
\label{lem_transf_inc_first_order}
Assume that \eqref{rec_transfer_second_kind} holds. Then, $\varepsilon_i(n) =c_i+\bo( n^{-\alpha})$
for $c_i\in\R$, $\alpha>0$ and both $i\in\{0,1\}$ implies, as $n\rightarrow\infty$,
$$
 a_i(n)= \frac{ \pi_0 c_0 + \pi_1 c_1} H  \log n + \bo(1),\quad i\in\Sigma,
$$
with constants $\pi_0,\pi_1$ and $H$ given in \eqref{stat_dist} and \eqref{def_ent}.
\end{lem}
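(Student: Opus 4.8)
The plan is to reduce the statement to a single one-dimensional recurrence by aggregating the two-component system against the stationary distribution, then peel off the leading $\log n$ term by a direct telescoping/induction argument. First I would write $\varepsilon_i(n)=c_i+r_i(n)$ with $r_i(n)=\bo(n^{-\alpha})$ and split $a_i(n)=b_i(n)+d_i(n)$, where $b_i(n)$ solves \eqref{rec_transfer_second_kind} with right-hand error $c_i$ and $d_i(n)$ solves it with right-hand error $r_i(n)$. By Lemma \ref{lem_transf_inc} (applied with the exponent $-\alpha<0$), the contribution $d_i(n)$ is $\bo(1)$, so it suffices to analyze $b_i(n)$, i.e. to assume $\varepsilon_i(n)\equiv c_i$ exactly. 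The key observation about \eqref{rec_transfer_second_kind} is that the operator on the right is a genuine averaging (substochastic-to-stochastic) operator: since $\E[I_n^i]=np_{i0}$, taking expectations and summing the recurrence for $i=0,1$ against the weights $\pi_0,\pi_1$ and using $\pi_0 p_{00}+\pi_1 p_{10}=\pi_0$, $\pi_0 p_{01}+\pi_1 p_{11}=\pi_1$ (the stationarity relations behind \eqref{stat_dist}) makes the ``mass'' $\pi_0+\pi_1=1$ invariant under the recursion. This is exactly what produces a $\log n$ rather than an $n^\theta$ growth.

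The main technical step is the following: guess $b_i(n)=\gamma\log n+e_i(n)$ with $\gamma=(\pi_0c_0+\pi_1c_1)/H$, substitute into \eqref{rec_transfer_second_kind}, and show the error sequences $e_i(n)$ stay bounded. Plugging in, the $\gamma\log n$ ansatz generates, on the right-hand side, $p_{i0}\gamma\,\E[\log I_n^i]+p_{i1}\gamma\,\E[\log(n-I_n^i)]$. Here one uses $\E[\log I_n^i]=\log(np_{i0})+\bo(1/n)$ and $\E[\log(n-I_n^i)]=\log(np_{i1})+\bo(1/n)$ — standard estimates for the logarithm of a binomial, valid because $p_{i0},p_{i1}\in(0,1)$ are bounded away from $0$ and $1$ — so the right-hand side contributes $\gamma\log n + \gamma(p_{i0}\log p_{i0}+p_{i1}\log p_{i1}) + \bo(1/n) = \gamma\log n - \gamma H_i + \bo(1/n)$. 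Matching with the left-hand $\gamma\log n$ shows the residual recurrence for $e_i(n)$ has the form $e_i(n)=p_{i0}\E[e_0(I_n^i)]+p_{i1}\E[e_1(n-I_n^i)] + (c_i-\gamma H_i) + \bo(1/n)$. The constant forcing term $c_i-\gamma H_i$ need not vanish componentwise, but it does vanish after $\pi$-averaging: $\sum_i \pi_i(c_i-\gamma H_i)=\pi_0c_0+\pi_1c_1-\gamma H=0$ by the definition of $\gamma$ and $H=\pi_0H_0+\pi_1H_1$. So the forcing term, viewed as a two-vector, has zero component along the invariant direction of the averaging operator, and since that operator contracts the complementary direction (the spectral gap coming from $p_{ij}\in(0,1)$, equivalently from ergodicity of the chain), one concludes $e_i(n)=\bo(1)$ — either by appealing to Lemma \ref{lem_transf_inc} applied to a suitably recentered version of $e_i$, or by a direct induction on $n$ showing $|e_i(n)|\le M$ for a large enough $M$.

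The step I expect to be the main obstacle is making the last paragraph rigorous: the forcing term does not decay, so one cannot simply invoke Lemma \ref{lem_transf_inc} on $e_i(n)$ as stated. The clean way is to introduce the recentered sequence $\tilde e_i(n):=e_i(n)-\kappa_i$ where the constants $\kappa_0,\kappa_1$ are chosen to solve the linear system $\kappa_i=p_{i0}\kappa_0+p_{i1}\kappa_1+(c_i-\gamma H_i)$; this system is solvable because the singular direction of $\mathrm{Id}-(p_{ij})$ is spanned by $(\pi_0,\pi_1)$ and the right-hand side is orthogonal to it by the computation above. Then $\tilde e_i(n)$ satisfies a recurrence of type \eqref{rec_transfer_second_kind} with error term $\bo(1/n)$ plus the approximation error from replacing $\E[e_0(I_n^i)]$ by $\E[\tilde e_0(I_n^i)]+\kappa_0$ etc. (which are exact), so Lemma \ref{lem_transf_inc} with $\alpha=-1<0$ gives $\tilde e_i(n)=\bo(1)$ and hence $e_i(n)=\bo(1)$. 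Care is needed with small $n$ (boundary values) and with the fact that $I_n^i$ can be $0$ or $n$, where $\log$ is handled via the convention $x\log x:=0$ and the exponentially small probabilities of these events; these are routine but must be stated. Assembling, $a_i(n)=b_i(n)+\bo(1)=\gamma\log n+\bo(1)$, which is the claim.
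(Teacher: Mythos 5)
Your plan is correct and is essentially the paper's proof: the paper likewise recenters $a_i(n)$ by $L\log n$ plus an additive constant (written explicitly as $c_{1-i}H_i/((p_{01}+p_{10})H)$, which is exactly a particular solution of your linear system $\kappa_i=p_{i0}\kappa_0+p_{i1}\kappa_1+(c_i-\gamma H_i)$, solvable by the same $\pi$-orthogonality observation), so that the constant forcing cancels and the recentered sequences satisfy \eqref{rec_transfer_second_kind} with toll $\bo\left(n^{-\min\{\alpha,1/2\}}\right)$, after which Lemma \ref{lem_transf_inc} gives the $\bo(1)$ bound. Your preliminary splitting of the toll into $c_i$ plus the $\bo(n^{-\alpha})$ part is an extra but harmless organizational step; the paper carries both terms along in one recentering.
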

\begin{proof}[Proof of lemma \ref{lem_transf}]
 The proof relies on the fact that $I_n^0$ and $I_n^1$ are concentrated around their means $p_{00}n$ and $p_{10}n$.
This leads to a geometric decay in the size of the toll term when iterating \eqref{rec_transfer_first_kind} on the right hand side.
It is more convenient to work with the monotone sequences given by
\begin{align*}
C_i(n):= \sup\{|a_i(k)| : 0 \leq k\leq n \},\qquad C(n):=\max \{C_0(n), C_1(n)\},\quad i\in\Sigma, n\in\N_0.
\end{align*}
Due to the upper bound $|a_i(n)|\leq C(n)$ for both $i\in\{0,1\}$, an upper bound on $C(n)$ is sufficient to prove the assertion.
To this end, let $\max_{i,j\in\{0,1\}}\{p_{ij}\}<\delta<1$ be a constant (the exact value of $\delta$ does not matter) and decompose \eqref{rec_transfer_first_kind} into
\begin{align}\label{decomp_pf_trans_lem_1}
 |a_i(n)|&\leq \E[( C(I_n^i)+C(n-I_n^i)) \Ind_{\{I_n^i\in [(1-\delta)n, \delta n]\}}]+ C(n) \Prob(I_n^i\notin [(1-\delta)n, \delta n]) +|\varepsilon_i(n)|.
\end{align}
Note that at least one of the following three equalities needs to hold by definition:
 \begin{align*}
 C(n)=|a_0(n)|\quad \text{or}\quad C(n)=|a_1(n)|\quad \text{or} \quad C(n)=C(n-1).
\end{align*}
Thus, the assumption on $\varepsilon_i(n)$ implies that there exists a constant $L>0$ such that \emph{at least one} of the following
two bounds holds
\begin{equation}\label{pf_lem_asym_small_toll}
\begin{aligned}
\beta(n) C(n) &\leq \max_{i\in\Sigma}\left\{ \E[ (C(I_n^i)+C(n-I_n^i)) \Ind_{\{I_n^i\in [(1-\delta)n,\delta n]\}}]+Ln^\alpha\right\}\\
C(n)&\leq C(n-1),
\end{aligned}
\end{equation}
where $\beta(n):=1-2 \max_{i\in\Sigma} \{\Prob( I_n^i \notin [(1-\delta) n, \delta n])\}$ converges to $1$ by a Chernoff bound on the binomial distribution (or the central
limit theorem).
Now (\ref{pf_lem_asym_small_toll}) implies for any $\varepsilon>0$ by induction on $n$ that
$$C(n) \leq D n^{\max\left\{-\frac {\log 4 }{\log \delta }, 2\alpha\right\} }(1+\varepsilon)^n$$
where $D=D(\varepsilon)>0$ is a sufficiently large constant. This yields for any
$K>1$ the rough upper bound $C(n) = \bo( K^n)$.\\

\noindent
To refine this bound, note that a standard Chernoff bound on the binomial distribution implies the existence of a constant $c>0$ such that for all $n\geq 0$
$$|\beta(n)-1| \leq 4 e^{-c n}$$
which together with $ C(n)=\bo(K^n)$ for $1<K<e^c$ yields a constant $L^\prime>0$ such that
$$|\beta(n)-1| C(n) \leq L^\prime n^\alpha, \quad n\in\N.$$
Combined with \eqref{decomp_pf_trans_lem_1}, this bound implies by induction on $n$ that
$$C(n) \leq \tilde{L} n \sum_{j=0}^{\lfloor -\log n /\log \delta \rfloor} (\delta^{1-\alpha})^j$$
where $\tilde{L}=\max\left\{C(d+1), (L+L^\prime)\max\{\delta^{\alpha-1},1\}\right\}$.
Thus, the assertion holds by the asymptotic of the geometric sum.
\end{proof}
\begin{proof}[Proof of lemma \ref{transf_lem_first_order}] An easy calculation reveals that the sequences
 \begin{align*}
  \tilde{a}_i(n):= a_i(n) - \frac{ \pi_0 c_0+\pi_1 c_1} H n \log n
+\frac{c_{1-i}H_i} {(p_{10}+p_{01})H}n
,\quad n\in\N,\, i\in\{0,1\},
 \end{align*}
satisfy
\begin{align*}
\tilde{a}_i(n)&=\E [\tilde{a}_0(I_{n}^i)] + \E [\tilde{a}_{1}(n-I_{n}^i)] + \bo\left(n^{\max\{\alpha, 1/3\}}\right).
\end{align*}
Thus, lemma \ref{lem_transf} yields $\widetilde{a}_i(n)=\bo(n)$
and the assertion follows.
More precisely,
note that the transformed sequences satisfy for all $n\in\N$ and $i\in\{0,1\}$
\begin{align*}
 \tilde{a}_i(n)=\E[\tilde{a}_0(I_n^i)]+\E[\tilde{a}_1(n-I_n^i)]+\tilde{\varepsilon}_i(n)
\end{align*}
with, for $h(x):=x\log x$,
\begin{align*}
\tilde{\varepsilon}_i(n)&=\varepsilon_i(n)-c\left(h(n) -\E[h(I_n^i)+h(n-I_n^i)]\right) \\
&\quad+\frac{c_{1-i}H_i} {(p_{10}+p_{01})H}n-\frac{c_{1}H_0} {(p_{10}+p_{01})H}n p_{i0}
-\frac{c_{0}H_1} {(p_{10}+p_{01})H}n p_{i1}.
\end{align*}
Thus, it only remains to show $\tilde{\varepsilon}_i(n)=\bo\left(n^{\max\{\alpha, 1/3\}}\right)$.
To this end, note that
\begin{align*}
 &h(n) -\E[h(I_n^i)+h(n-I_n^i)]\\
&=-\E[n h(I_n^i/n)+ n h(1-I_n^i/n)]\\
&=H_i n -n\E[h(I_n^i/n)-h(p_{i0})+h(1-I_n^i/n)-h(p_{i1})]\\
&=H_i n +\bo\left(n^{1/3}\right)
\end{align*}
where the last equality holds by the concentration of the binomial
distribution and the asymptotic of $\log (1+x)$ as $x\rightarrow 0$
(note that $\log(I_n^i /n)-\log(p_{i0})=\log(1+(I_n^i-np_{i0})/(np_{i0}))$).
Details can be found in the appendix, equation \eqref{eee_app}.
Therefore, an easy calculation yields
$\tilde{\varepsilon}_i(n)=\bo\left(n^{\max\{\alpha, 1/3\}}\right)$
and the assertion follows.
\end{proof}
\begin{proof}[Proof of lemma \ref{lem_transf_inc}]
The idea is essentially the same as in the proof of lemma \ref{lem_transf}:
Once again,
it is more convenient to work with the monotone sequences $(C_i(n))_{n\geq 0}$ and $(C(n))_{n\geq 0}$
 given by
$$C_i(n):=\sup\{|a_i(k)| : 0 \leq k\leq n \},\qquad C(n):=\max\{C_0(n), C_1(n)\}, \quad n\in\N_0, i\in\Sigma.$$
With $\max_{i,j\in\{0,1\}}\{p_{ij}\}<\delta < 1$ equation \eqref{rec_transfer_second_kind} may be decomposed into
\begin{align*}
 |a_i(n)|&\leq  \E[(p_{i0} C_0(I_n^i)+p_{i1} C_1(n-I_n^i)) \Ind_{\{I_n^i\in [(1-\delta)n, \delta n]\}}]+C(n)\Prob(I_n^i\notin [(1-\delta)n, \delta n])+ |\varepsilon_i(n)|
\end{align*}
As in the proof of \ref{lem_transf} this implies $C(n)=\bo(K^n)$ for any constant $K>1$ and, by a standard Chernoff bound on the binomial distribution
\begin{align*}
 |a_i(n)| \leq \E[(p_{i0} C_0(I_n^i)+p_{i1} C_1(n-I_n^i)) \Ind_{\{I_n^i\in [(1-\delta)n, \delta n]\}}] + \bo(n^\alpha).
\end{align*}
One obtains by induction on $n$ that
$$C(n) \leq \tilde{L} \sum_{k=0}^{\lfloor - \log n / \log \delta \rfloor} \delta^{-\alpha j}$$
and the assertion follows by the asymptotic behavior of the geometric sum.
\end{proof}
\begin{proof}[Proof of lemma \ref{lem_transf_inc_first_order}] An easy calculation reveals that the sequences
\begin{align*}
 \tilde{a}_i(n)&:= a_i(n)- L g(n) +\frac{c_{1-i} H_i} {(p_{01}+p_{10}) H},\quad i\in\{0,1\},\,n\in\N
\end{align*}
with $L=( \pi_0 c_0 + \pi_1 c_1) / H$ satisfy
\begin{align*}
  \tilde{a}_i(n)&=p_{i0}\E [\tilde{a}_0(I_{n}^i)] + p_{i1}\E [\tilde{a}_{1}(n-I_{n}^i)] +\bo\left( n^{-\min\{\alpha,1/2\}}\right).
\end{align*}
Thus, lemma \ref{lem_transf_inc} implies the assertion.
\end{proof}

\section{Analysis of the Mean} \label{sec:mean}
First we study the asymptotic behavior of the expected number of Bucket operations
with a precise error term needed to derive a limit law in Section \ref{sec:limit}.
\begin{thm}\label{thm:mean}
For the number $B_n^\mu$ of Bucket operations under the Markov source model with conditions (\ref{cond_prob}) we have
\begin{align*}
 \E[B_n^\mu] =\frac{1}{H} n\log n + \bo(n), \qquad (n\to\infty),
\end{align*}
with the entropy rate $H$ of the Markov chain given in (\ref{def_ent}). The $\bo(n)$ error term is uniform in the initial distribution $\mu$.
\end{thm}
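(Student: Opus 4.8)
The plan is to reduce the statement to the two special sequences $a_i(n):=\E[B_n^i]$ for $i\in\Sigma$, establish the first-order asymptotics for these by the transfer machinery, and then lift the result to an arbitrary initial distribution $\mu$ via the decomposition \eqref{decomp_initial}, keeping track of uniformity. First I would observe that $a_i(n)=\E[B_n^i]$ together with the toll sequence $\varepsilon_i(n)=n\Ind_{[2,\infty)}(n)$ satisfies \eqref{rec_transfer_first_kind}: this is immediate from the system \eqref{decomp_zero_one} by taking expectations and using that $(B^0_k)$, $(B^1_k)$ and $I_n^i$ are independent, so that $\E[B_n^i]=\E[\E[B^0_{I_n^i}\mid I_n^i]]+\E[\E[B^1_{n-I_n^i}\mid I_n^i]]+n$ for $n\ge 2$, while the $n=1$ and the corrective $n$-vs-$n\Ind_{[2,\infty)}(n)$ discrepancy only contribute a bounded term. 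Since $\varepsilon_i(n)=n+\bo(1)$, i.e.\ the hypothesis of Lemma \ref{transf_lem_first_order} holds with $c_0=c_1=1$ and any $\alpha<1$, that lemma yields
\begin{align*}
\E[B_n^i]=\frac{\pi_0+\pi_1}{H}\,n\log n+\bo(n)=\frac1H\,n\log n+\bo(n),\qquad i\in\Sigma,
\end{align*}
using $\pi_0+\pi_1=1$.

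Next I would transfer this to general $\mu=\mu_0\delta_0+\mu_1\delta_1$ using \eqref{decomp_initial}, namely $B_n^\mu\stackrel{d}{=}B_{K_n}^0+B_{n-K_n}^1+n$ with $K_n\sim B(n,\mu_0)$ independent of the two families. Taking expectations gives $\E[B_n^\mu]=\E[g(K_n)]+\E[g(n-K_n)]+n$ where $g(k):=\E[B_k^0]$ for the first summand and $\E[B_k^1]$ for the second; by the previous step $g(k)=\frac1H k\log k+\bo(k)$ with an error term uniform in $k$. The key point is that $K_n/n$ concentrates around $\mu_0\in[0,1]$, so a standard argument — split on the event $\{K_n\in[\tfrac n2 - n^{2/3},\,\tfrac n2 + n^{2/3}]\}$ versus its complement, use $B_k^i\le B_n^i$ monotonically together with the just-proved $\E[B_n^i]=\bo(n\log n)$ on the rare complement, and on the likely event Taylor-expand $k\log k$ around $\mu_0 n$ — shows $\E[g(K_n)]=\frac1H\mu_0 n\log n+\bo(n)$ and likewise $\E[g(n-K_n)]=\frac1H(1-\mu_0)n\log n+\bo(n)$, hence $\E[B_n^\mu]=\frac1H n\log n+\bo(n)$. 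Crucially the $\bo(n)$ bound here depends on $\mu$ only through $\mu_0\in[0,1]$, and the Taylor error $n\cdot(K_n/n-\mu_0)^2$ has expectation $\mu_0(1-\mu_0)\le 1/4$ times a bounded factor, so the bound is in fact uniform over all $\mu$; when $\mu_0\in\{0,1\}$ one of the buckets is empty and the statement reduces to the already-established case $\E[B_n^0]$ or $\E[B_n^1]$.

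The main obstacle is making the transfer in Lemma \ref{transf_lem_first_order} rigorous, which is exactly where the nontrivial input sits: the lemma's proof requires both the rough a priori bound $\E[B_n^i]=\bo(K^n)$ and then the concentration-driven refinement, and it leans on the delicate estimate $h(n)-\E[h(I_n^i)+h(n-I_n^i)]=H_i n+\bo(n^{1/3})$ for $h(x)=x\log x$, coming from the fluctuations of the binomial being of order $\sqrt n$ and $\log(1+x)=x+\bo(x^2)$. Since that lemma is already proved in Section \ref{sec:transfer}, for the present theorem the only genuinely new work is the uniformity bookkeeping in the lifting step — verifying that the concentration split above produces a $\mu$-independent constant in the $\bo(n)$ — and this is routine once one notes that the only place $\mu$ enters is through the variance $\mu_0(1-\mu_0)$ of $K_n/n$, which is bounded by $1/4$.
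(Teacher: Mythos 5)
Your overall architecture matches the paper's: first handle the initial distributions $\rho_0,\rho_1$ through the transfer machinery of Section \ref{sec:transfer}, then lift to a general $\mu$ via \eqref{decomp_initial}. The details differ in both steps. For the special cases the paper does not apply Lemma \ref{transf_lem_first_order} directly; it deduces Theorem \ref{thm:mean} from Proposition \ref{thEwLip}, i.e.\ from the Lipschitz continuity of $f_i(n)=\E[B_n^i]-n\log n/H$, which is in turn proved via the increment transfer Lemma \ref{lem_transf_inc_first_order} combined with Lemma \ref{lemdelta_app}. Your direct application of Lemma \ref{transf_lem_first_order} with $a_i(n)=\E[B_n^i]$ and $\varepsilon_i(n)=n\Ind_{[2,\infty)}(n)$ is legitimate --- it is exactly the example the paper records after \eqref{rec_transfer_first_kind} --- and it is the shorter route to the theorem as stated; what the paper's detour through the Lipschitz property buys is the finer control on the error term $f_i$ that is needed later for the variance and the limit law, not for Theorem \ref{thm:mean} itself.

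In the lifting step your argument is heavier than necessary and, as written, the uniformity bookkeeping has a soft spot. From $\E[B_n^\mu]=\E[\nu_0(K_n)]+\E[\nu_1(n-K_n)]+n$ and $\nu_i(k)=k\log k/H+\bo(k)$, the only quantity to control is $\E[K_n\log K_n+(n-K_n)\log(n-K_n)]-n\log n = n\,\E[h(K_n/n)+h(1-K_n/n)]$ with $h(x)=x\log x$, and this is $\bo(n)$ uniformly in $\mu$ simply because $|h|\le 1/e$ on $[0,1]$; no concentration or Taylor expansion is needed, and this is precisely the paper's one-line argument. Your Taylor expansion of $k\log k$ around $\mu_0 n$ with remainder ``$n(K_n/n-\mu_0)^2$ times a bounded factor'' is not uniformly justified: the relevant second derivative $h''(x)=1/x$ is unbounded near $0$, so for $\mu_0$ close to (but different from) $0$ or $1$ the factor is not bounded, and your separate treatment of $\mu_0\in\{0,1\}$ does not cover that regime; also the splitting event should be centered at $\mu_0 n$, not $n/2$. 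This is repairable (indeed superseded by the boundedness argument above), so the conclusion stands, but the uniformity claim as you argue it is not complete.
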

Our proof of Theorem \ref{thm:mean} as well as the corresponding limit law in Theorem \ref{thm:limit} depend on refined
properties of the $\bo(n)$ error term that are first obtained for the initial distributions
$\rho_0=p_{00}\delta_0+p_{01}\delta_1$ and $\rho_1=p_{10}\delta_0+p_{11}\delta_1$ and then generalized to arbitrary initial
distribution via (\ref{decomp_initial}). For those initial distributions we
 denote the error term for all $n\in\N_0$ and $i\in \Sigma$ by
\begin{align}\label{def_fi}
 f_i(n) := \E[B_n^i]-\frac{1}{H} n\log n.
\end{align}
The following Lipschitz continuity of $f_0$ and $f_1$ is crucial for our further analysis:
\begin{prop}\label{thEwLip}
There exists a constant $C>0$ such that for both $i\in\Sigma$ and all $m,n\in\N_0$
\begin{align*}
|f_i(m)-f_i(n)| \leq C |m-n|.
\end{align*}
\end{prop}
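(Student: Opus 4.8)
The plan is to establish the Lipschitz bound $|f_i(m)-f_i(n)|\le C|m-n|$ by first reducing to consecutive increments, i.e.\ it suffices to show $|f_i(n+1)-f_i(n)|\le C$ uniformly in $n$ and $i$, and then prove this bound by setting up a recurrence for the increments $a_i(n):=f_i(n+1)-f_i(n)$ and applying the transfer machinery of Section~\ref{sec:transfer}. First I would record that, from the recurrence \eqref{decomp_zero_one}, $\E[B_n^i]$ satisfies $\E[B_n^i]=\E[\E[B^0_{I_n^i}]]+\E[\E[B^1_{n-I_n^i}]]+n$ for $n\ge 2$; subtracting the corresponding identity for $n+1$ and using the standard ``binomial splitting'' relation between $I_{n+1}^i$ and $I_n^i$ (adding one more independent $B(p_{i0})$ trial), one obtains a recurrence of the form \eqref{rec_transfer_second_kind} for $a_i(n)=f_i(n+1)-f_i(n)$, namely $a_i(n)=p_{i0}\E[a_0(I_n^i)]+p_{i1}\E[a_1(n-I_n^i)]+\varepsilon_i(n)$, where $\varepsilon_i(n)$ collects the toll term (the constant $1$ coming from differencing the ``$+n$''), the contribution from differencing $\tfrac1H n\log n$ (which is $\tfrac1H\log(n+1)+\tfrac1H=\tfrac1H\log n+\bo(1/n)$), and the discrepancy between $\E[g(I_{n+1}^i)]$ and $\E[g(I_n^i)+\text{one extra increment}]$ for $g$ one of the functions $f_0,f_1$ evaluated along the chain.

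Next I would verify that $\varepsilon_i(n)$ is bounded, i.e.\ $\varepsilon_i(n)=\bo(1)$. This is where the bulk of the computation sits: the term coming from $\log$ is $\bo(1)$ trivially; the term from differencing $x\log x$ against the binomial is $\bo(1)$ by concentration of $I_n^i$ around $p_{i0}n$ together with the $\log(1+x)\sim x$ expansion (exactly the estimate already carried out in the proof of Lemma~\ref{transf_lem_first_order}, here in the easier ``increment'' regime where one gains an extra power of $n$); and the genuinely recursive piece is controlled because coupling $I_{n+1}^i$ to $I_n^i$ changes the index by at most $1$, so the relevant difference is itself an increment $a_j$ evaluated at a random point — this would a priori be circular, so one must be careful to isolate only the \emph{deterministic} part here and push the random $a_j$-terms into the recursive structure \eqref{rec_transfer_second_kind} rather than into $\varepsilon_i$. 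Once $\varepsilon_i(n)=\bo(1)$ (in fact $\bo(n^0)$ with $\alpha=0$), Lemma~\ref{lem_transf_inc} would only give $a_i(n)=\bo(\log n)$, which is not enough; so I would instead show the sharper $\varepsilon_i(n)=c_i+\bo(n^{-\alpha})$ for some $\alpha>0$ and constants $c_i$, apply Lemma~\ref{lem_transf_inc_first_order} to get $a_i(n)=\tfrac{\pi_0c_0+\pi_1c_1}{H}\log n+\bo(1)$, and then observe that the definition of $H$ forces $\pi_0c_0+\pi_1c_1=0$, leaving $a_i(n)=\bo(1)$.

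The cancellation $\pi_0c_0+\pi_1c_1=0$ is the conceptual heart of the argument: the leading constants $c_i$ in the toll are precisely the derivatives of the ``$\tfrac1H n\log n$'' subtraction balanced against the toll $+n$, and the whole point of choosing the normalization $\tfrac1H n\log n$ (with $H$ the entropy rate) is that this weighted combination vanishes — this is exactly the same balancing that makes $f_i(n)=\bo(n)$ in Theorem~\ref{thm:mean} rather than $\bo(n\log n)$. I would make this explicit by identifying $c_i$ in closed form (it will involve $H_i$, $p_{ij}$, and $\tfrac1H$) and checking the linear identity against \eqref{def_ent} and \eqref{stat_dist}. I expect the main obstacle to be the bookkeeping in deriving the increment recurrence cleanly — in particular separating the deterministic $\bo(1)$ toll from the genuinely recursive $\E[a_j(\cdot)]$ terms when coupling $I_{n+1}^i$ and $I_n^i$, and confirming that the error in the $x\log x$/binomial comparison is not merely $\bo(1)$ but $\bo(n^{-\alpha})$ for a definite $\alpha>0$ (here $\alpha=1/3$ from the Chernoff-plus-Taylor estimate suffices), so that Lemma~\ref{lem_transf_inc_first_order} actually applies. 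Everything else is a routine application of the transfer lemmas already proved. Finally, to pass from the increment bound to the full statement, write $f_i(m)-f_i(n)=\sum_{k=n}^{m-1}a_i(k)$ for $m>n$ and use $|a_i(k)|\le C$ to conclude $|f_i(m)-f_i(n)|\le C|m-n|$, with the $C$ here absorbing the finitely many small-index exceptions where \eqref{decomp_zero_one} does not apply.
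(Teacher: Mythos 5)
Your proposal is correct and follows essentially the same route as the paper: difference the recurrence for $f_i$ to get a recurrence of type \eqref{rec_transfer_second_kind} for the increments (your coupling of $I_{n+1}^i$ and $I_n^i$ by one extra Bernoulli trial is exactly the paper's Lemma \ref{lemdelta_app}), estimate the differenced toll as $c_i+\bo(n^{-\alpha})$ via binomial concentration, apply Lemma \ref{lem_transf_inc_first_order}, and use the entropy identity $H=\pi_0H_0+\pi_1H_1$ to see that the $\log n$ coefficient vanishes, so the increments are bounded and summing them gives the Lipschitz bound.
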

In order to prove the Lipschitz continuity of the error terms $f_0$ and $f_1$ (proposition \ref{thEwLip}) we will analyze the increments of $(f_0(n))_{n\geq 0}$
and $(f_1(n))_{n\geq 0}$ and apply Lemma \ref{lem_transf_inc_first_order}. We use the following notation for the increments:

For a sequence $x=(x(n))_{n\ge 0}$ in $\R$ we denote its (finite forward) difference sequence by
$(\Delta x(n))_{n\ge 0}$, where
\begin{align*}
\Delta x(n) := (\Delta x)(n) :=  x(n+1)-x(n), \qquad n\in \N.
\end{align*}
Note that the order of operation is first applying the $\Delta$-operator to the sequence then evaluating the difference sequence at $n$. In particular, for any sequence  $(m_n)_{n\in\N}$ in $\N_0$ we have
\begin{align*}
\Delta x(m_n) = x(m_n+1)-x(m_n), \qquad n\in \N_0
\end{align*}
(and in general $\Delta x(m_n) \neq x(m_{n+1})-x(m_n)$).

In the analysis of $(\Delta f_i(n))_{n\geq 0}$, $i\in\Sigma$ we use the following
 Lemma which is a special case of Lemma 2 in Schachinger \cite{Schach95}.
\begin{lem}
\label{lemdelta_app}
For any real sequence $(a(n))_{n \geq 0}$ and binomially $B(n,p)$ distributed $X_n$ with $p\in(0,1)$ we have
\begin{align*}
\Delta \E[a(X_n)]= p \E[\Delta a(X_n)], \quad n\in \N.
\end{align*}
\end{lem}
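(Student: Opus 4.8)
The plan is to prove the identity by the elementary binomial coupling, i.e.\ by realising $X_{n+1}$ as $X_n+Y$ with $Y$ a $B(p)$ variable independent of $X_n$, or equivalently by a direct manipulation of the binomial sum via Pascal's rule. I would start from
$$\E[a(X_{n+1})]=\sum_{k=0}^{n+1}\binom{n+1}{k}p^k(1-p)^{n+1-k}a(k)$$
and apply $\binom{n+1}{k}=\binom{n}{k-1}+\binom{n}{k}$, which holds for all $0\le k\le n+1$ once we adopt the usual conventions $\binom{n}{-1}=\binom{n}{n+1}=0$. Splitting the sum along this identity produces two pieces: in the first, the substitution $j=k-1$ turns $\sum_k\binom{n}{k-1}p^k(1-p)^{n+1-k}a(k)$ into $p\sum_{j=0}^{n}\binom{n}{j}p^{j}(1-p)^{n-j}a(j+1)=p\,\E[a(X_n+1)]$, and the second piece is $(1-p)\sum_{k=0}^{n}\binom{n}{k}p^k(1-p)^{n-k}a(k)=(1-p)\,\E[a(X_n)]$.

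Combining the two pieces gives the one-step mixture relation
$$\E[a(X_{n+1})]=p\,\E[a(X_n+1)]+(1-p)\,\E[a(X_n)].$$
Subtracting $\E[a(X_n)]$ from both sides then yields
$$\Delta\E[a(X_n)]=p\bigl(\E[a(X_n+1)]-\E[a(X_n)]\bigr)=p\,\E\!\left[a(X_n+1)-a(X_n)\right]=p\,\E[\Delta a(X_n)],$$
which is exactly the assertion of the lemma for every $n\in\N$.

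There is no real obstacle here: the only point demanding a moment's care is the handling of the boundary indices $k=0$ and $k=n+1$ when invoking Pascal's identity and when re-indexing the first sum, but under the stated conventions the extra terms are zero, so the computation is short and self-contained. An equally brief alternative is to write $a(X_{n+1})\stackrel{d}{=}a(X_n+Y)$ with $Y\sim B(p)$ independent of $X_n$, condition on the two values of $Y$, and take expectations; this produces the same mixture relation directly and hence the same conclusion.
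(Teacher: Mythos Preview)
Your proof is correct and essentially matches the paper's. The paper takes precisely your ``equally brief alternative'': it writes $X_{n+1}\stackrel{d}{=}X_n+B$ with $B$ Bernoulli$(p)$ independent of $X_n$, computes $\Delta\E[a(X_n)]=\E[a(X_n+B)-a(X_n)]=p\,\E[\Delta a(X_n)]$, and is done in two lines. Your Pascal's-rule computation is just the same coupling written out at the level of pmf's, so there is no substantive difference.
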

\begin{proof}
Note that $X_{n+1}\stackrel{d}{=} X_n + B$ in which $B$ and $X_n$ are independent and $\Prob(B=1)=p=1-\Prob(B=0)$. This yields
\begin{align*}
 \Delta \E[a(X_n)]= \E[a(X_n+B)-a(X_n)]= p\E[\Delta a(X_n)]
\end{align*}
which is the assertion.
\end{proof}
\begin{proof}[Proof of proposition \ref{thEwLip}]
 Note that \eqref{decomp_zero_one} implies
\begin{align*}
 f_i(n) = \E[f_0(I_n^i)]+\E[f_1(n-I_n^i)] +\varepsilon_i(n)
\end{align*}
with the toll function
\begin{align*}
 \varepsilon_i(n)&=n- \frac 1 H (n \log n -\E[I_n^i \log I_n^i] -\E[(n-I_n^i) \log (n-I_n^i)].
\end{align*}
Thus, lemma \ref{lemdelta_app} yields for the increments $a_i(n):=\Delta f_i(n)$
\begin{align*}
 a_i(n)= p_{i0}\E[a_0(I_n^i)]+p_{i1} \E[a_1(n-I_n^i)] + \Delta\varepsilon_i(n).
\end{align*}
Moreover, another application of lemma \ref{lemdelta_app} yields
\begin{align*}
  \Delta\varepsilon_i(n)=1 - \frac 1 H ( \Delta h(n) -p_{i0}\E[\Delta h(I_n^i)]- p_{i1}\E[\Delta h(n-I_n^i)])
\end{align*}
where $h(x):= x \log x$. Since $\Delta h(n)= \log(n+1)+n \log (1+1/n)=\log(n+1) +1 +\bo(1/n)$, one obtains
\begin{align*}
 \Delta\varepsilon_i(n)&= 1- \frac 1 H ( \log(n+1) -p_{i0}\E[\log(I_n^i+1)]-p_{i1}\E[\log(n-I_n^i +1)]+\bo(1/n)\\
&=1- \frac 1 H (-p_{i0}\log p_{i0} -p_{i1}\log p_{i1}) +\bo (n^{-1/2}).
\end{align*}
The last equation is based on the fact that
$\E[\log((I_n^i+1)/(n+1))]=\log(p_{i0})+\bo(n^{-1/2})$
for any binomially $B(n,p_{i0})$ distributed $I_n^i$ (details are given in the appendix, equation \eqref{aaa_app}).
Therefore, lemma \ref{lem_transf_inc_first_order} implies
$\Delta f_i(n) = L \log n +\bo(1)$ with a constant
\begin{align*}
L=\frac 1 H \left( \pi_0 \left(1- \frac 1 H (-p_{00}\log p_{00}
-p_{01}\log p_{01})\right)+\pi_1\left(1- \frac 1 H
(-p_{10}\log p_{10} -p_{11}\log p_{11})\right)\right)
=0.
\end{align*}
Thus, $\Delta f_i(n)$ is bounded and the assertion follows.
\end{proof}
\begin{proof}[Proof of theorem \ref{thm:mean}]
For $\mu=p_{i0}\delta_0+p_{i1}\delta_1$, $i\in\{0,1\}$ theorem \ref{thm:mean} is an immediate consequence of proposition \ref{thEwLip}.
For the general case let $\nu_i(n):= \E[B_n^i]$. Then, the distributional recursion \ref{decomp_initial} yields
\begin{align*}
 \E[B_n^\mu]=\E[\nu_0(K_n)]+\E[\nu_1(n-K_n)] + n.
\end{align*}
Thus, $\nu_i(n)= n \log n / H +\bo(n)$ implies
\begin{align*}
 \E[B_n^\mu]= \frac 1 H n \log n + \frac n H \E[ h(I_n^i/n)]+\E[h(1-I_n^i/n)] + \bo(n)
\end{align*}
where $h(x)=x\log x$. Since $h$ is uniformly bounded on $(0,1]$, the assertion follows.
\end{proof}

\section{Analysis of the Variance}\label{sec:var}

In this section we establish precise growth of the variance with a uniform bound.
We prove the following theorem.

\begin{thm} \label{thm:variance}
For the number $B_n^\mu$ of Bucket operations under the
Markov source model with conditions (\ref{cond_prob}) we have, as $n\to\infty$,
\begin{align}\label{var_asympt}
\V(B_n^\mu) =\sigma^2 n\log n + \bo\left(n \sqrt{\log n}\right),
\end{align}
where $\sigma^2>0$ is independent of the initial distribution $\mu$ and
given by
\begin{align}
 \label{var_formula}
\sigma^2=\frac{\pi_0 p_{00}p_{01}}{H^3} \left( \log (p_{00}/p_{01})+ \frac {H_1-H_0}{p_{01}+p_{10}}\right)^2+\frac{\pi_1 p_{10}p_{11}}{H^3}\left( \log (p_{10}/p_{11})+ \frac {H_1-H_0}{p_{01}+p_{10}}\right)^2.\end{align}
\end{thm}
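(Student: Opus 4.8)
The plan is to follow the same blueprint that worked for the mean in Theorem~\ref{thm:mean}: first establish \eqref{var_asympt} with the explicit constant for the two canonical initial distributions $\rho_0,\rho_1$ and then transfer to an arbitrary $\mu$ via \eqref{decomp_initial}. The difficulty is that the variance does not satisfy a clean additive recurrence of the form \eqref{rec_transfer_first_kind}; squaring \eqref{decomp_zero_one} produces cross terms involving the (now known) means. So the first step will be to write, for $i\in\Sigma$,
\begin{align*}
V_i(n):=\V(B_n^i), \qquad V_i(n)=\E[V_0(I_n^i)]+\E[V_1(n-I_n^i)]+t_i(n),
\end{align*}
where the toll $t_i(n)$ collects (a)~$\V\bigl(\E[B^0_{I_n^i}+B^1_{n-I_n^i}+n \mid I_n^i]\bigr)$, the variance coming from the random split, and (b)~the conditional-variance contribution $\E\bigl[\V(\dots\mid I_n^i)\bigr]-\E[V_0(I_n^i)]-\E[V_1(n-I_n^i)]$, which vanishes by independence. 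Thus $t_i(n)=\V\bigl(g_i(I_n^i)\bigr)$ with $g_i(k):=\E[B_k^0]+\E[B^1_{n-k}]$; using $\E[B_k^0]=k\log k/H+f_0(k)$, $\E[B_{n-k}^1]=(n-k)\log(n-k)/H+f_1(n-k)$ and a second-order Taylor expansion of $x\mapsto x\log x$ around the mean $p_{i0}n$, the leading behaviour of $t_i(n)$ is $\widehat\sigma_i^2\, n+(\text{lower order})$ for an explicit $\widehat\sigma_i^2$; the Lipschitz bound from Proposition~\ref{thEwLip} is exactly what is needed to show the $f$-contributions only perturb $t_i(n)$ by $\bo(\sqrt{n})$ or so. Here is where the ``poissonization argument'' and ``splitting of $B_n^\mu$ into a suitable sum'' alluded to in the introduction enter: one writes $B_n^i$ as a telescoping sum over levels (number of strings surviving to each depth), analyses the variance level-by-level under a Poisson number of strings, and dePoissonizes. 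This is the place the announced Schachinger construction \cite{Schach95} is used to control the error terms in the variance of the level sizes.

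Once $t_i(n)=c_i n+\bo(n^{\beta})$ for some $\beta<1$ is in hand, Lemma~\ref{transf_lem_first_order} immediately gives
\begin{align*}
V_i(n)=\frac{\pi_0 c_0+\pi_1 c_1}{H}\, n\log n+\bo(n),
\end{align*}
and a bookkeeping computation identifies $(\pi_0 c_0+\pi_1 c_1)/H$ with the constant $\sigma^2$ in \eqref{var_formula} — the two summands there being precisely $\pi_i c_i/H$ after the Taylor coefficients are simplified using $H_i=-p_{i0}\log p_{i0}-p_{i1}\log p_{i1}$ and the stationarity identities \eqref{stat_dist}. The shape $p_{i0}p_{i1}(\log(p_{i0}/p_{i1})+\dots)^2$ is the variance of the linearised toll evaluated at the derivative of $x\log x$, i.e.\ it comes from $\V(I_n^i)=np_{i0}p_{i1}$ times the square of the slope $\tfrac1H(\log p_{i0}-\log p_{i1})$ plus the correction coming from the linear term $Ln$ hidden inside $f_i$ (which explains the extra $(H_1-H_0)/(p_{01}+p_{10})$ shift). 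For the transfer to general $\mu$: from \eqref{decomp_initial}, $\V(B_n^\mu)=\E[V_0(K_n)]+\E[V_1(n-K_n)]+\V(\E[B^0_{K_n}+B^1_{n-K_n}+n\mid K_n])$, and the last term is $\bo(n\log^2 n)$ a priori but in fact $\bo(n)$ because the relevant second derivative of the mean is $\bo(1/n)$; plugging in $V_i(n)=\sigma^2 n\log n+\bo(n\sqrt{\log n})$ and using that $\sigma^2$ is the same for $i=0,1$, the $n\log n$ terms combine to $\sigma^2 n\log n$ (since $\E[K_n\log K_n]+\E[(n-K_n)\log(n-K_n)]=n\log n+\bo(1)$), and the claim follows.

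I would also separately verify $\sigma^2>0$: under \eqref{cond_prob} at least one $p_{ij}\neq 1/2$, and a short case analysis shows the two bracketed quantities cannot both vanish (if $\log(p_{00}/p_{01})=-c$ and $\log(p_{10}/p_{11})=-c$ with $c=(H_1-H_0)/(p_{01}+p_{10})$, one derives $p_{00}=p_{10}$, $p_{01}=p_{11}$, hence a memoryless source, forcing $c=0$ and then $p_{ij}=1/2$, a contradiction).

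The main obstacle I expect is Step~1, namely pinning down the toll $t_i(n)$ — equivalently $\V(B_n^i)$ modulo the recurrence — with an error term as good as $\bo(n\sqrt{\log n})$. The naive Taylor expansion of $x\log x$ only controls things to $\bo(n)$ in the mean but the quadratic fluctuations of $I_n^i$ around $p_{i0}n$ are of order $\sqrt n$, so third-order terms and the $\log$-singularity at $0$ both need careful treatment; this is precisely why the poissonization/dePoissonization detour and Schachinger's refined variance estimate for the profile are needed rather than a direct attack on the recurrence. Everything after that — the application of Lemma~\ref{transf_lem_first_order}, the algebraic identification of $\sigma^2$, the transfer via \eqref{decomp_initial}, and positivity — is routine by comparison.
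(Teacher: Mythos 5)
Your overall architecture (treat $\rho_0,\rho_1$ first, use the transfer lemmas, then pass to general $\mu$ via \eqref{decomp_initial}) matches the paper, but the central step of your plan has a genuine gap. You propose the direct variance recurrence $V_i(n)=\E[V_0(I_n^i)]+\E[V_1(n-I_n^i)]+t_i(n)$ with $t_i(n)=\V\bigl(\nu_0(I_n^i)+\nu_1(n-I_n^i)\bigr)$ and claim that Proposition \ref{thEwLip} shows the $f$-contributions perturb $t_i(n)$ only by $\bo(\sqrt n)$. That is not what Lipschitz continuity gives: since $I_n^i$ fluctuates on the scale $\sqrt n$, a Lipschitz $f_0$ only yields $\V\bigl(f_0(I_n^i)\bigr)=\bo(n)$, and the cross term with the $h$-part (whose standard deviation is also of order $\sqrt n$, the $\log n$ slopes cancelling in the sum $h(I_n^i)+h(n-I_n^i)$) is likewise $\bo(n)$. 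So the unknown mean-error contaminates $t_i(n)$ at exactly the order of the leading term $c_in$: you cannot conclude $t_i(n)=c_in+\bo(n^\beta)$ with $\beta<1$, Lemma \ref{transf_lem_first_order} is not applicable, and Lemma \ref{lem_transf} alone only gives $V_i(n)=\bo(n\log n)$, losing $\sigma^2$ entirely. Your heuristic for the shift $(H_1-H_0)/(p_{01}+p_{10})$ ("the linear term $Ln$ hidden inside $f_i$") presupposes an expansion $f_i(n)=\ell_i n+o(\text{fluctuation scale})$ that neither Theorem \ref{thm:mean} nor Proposition \ref{thEwLip} provides, and which the paper explicitly does not attempt to prove.

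The paper's way around this is precisely the Schachinger-type splitting that you mention only in passing and misdescribe: one constructs $(X_n^i,Z_n^i)$ satisfying the joint recurrence \eqref{split_for_var} with the deterministic toll split $n=\eta_n^{i,1}+\eta_n^{i,2}$ of \eqref{def_etani_split_toll}, so that $S_n^i=X_n^i+Z_n^i$ has the same law of mean and variance as $B_n^i$, while $\E[X_n^i]$ is known \emph{exactly} (Lemma \ref{lem_mean_changed_toll}) -- no unknown $f_i$ enters. Then the toll of the variance recurrence for $X_n^i$ can be computed to be $c_in+\bo(n^{2/3})$ and Lemma \ref{transf_lem_first_order} identifies $\V(X_n^i)=\sigma^2 n\log n+\bo(n)$; the residual $Z_n^i$, whose toll is only $\bo(\log n)$ (Lemma \ref{lem_asym_etani}), is shown to have $\V(Z_n^i)=\bo(n)$ by poissonization (which makes the two subtree contributions independent, so Lemma \ref{lem_asym_pois_var} applies) followed by depoissonization via $\Delta V_Z^i(n)=\bo(\sqrt n)$ -- not by a level-by-level telescoping analysis as you sketch. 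Finally the error term $\bo\bigl(n\sqrt{\log n}\bigr)$ in \eqref{var_asympt} arises exactly as the covariance cross term $\bo\bigl(\sqrt{\V(X_n^i)\V(Z_n^i)}\bigr)$ in Lemma \ref{lem_var_sum}; your direct route has no mechanism producing this error scale. The remaining ingredients of your proposal (transfer to general $\mu$, which should be argued via Proposition \ref{thEwLip} rather than a ``second derivative of the mean,'' and the positivity check for $\sigma^2$) are essentially fine, but the identification of $\sigma^2$ as proposed does not go through without the splitting construction or some equally strong substitute for an expansion of $f_i$.
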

In order to derive the first order asymptotics of the variance without studying the mean in
detail, we extend an idea of Schachinger in \cite{Schach95} to Markov Sources. The main
ingredient is to split the number of Bucket operations into a sum of two random variables in which mean and variance
of the first random variable is easy to derive and the variance of the second random variable is small (i.e.~$\bo(n)$).

Once again, for $i\in\Sigma$ and $n\in\N_0$ let $I_n^i$ be a Binomial $B(n,p_{i0})$ distributed random variable.
Now let $(X_n^0,Z_n^0)_{n\in\N_0}$, $(X_n^1, Z_n^1)_{n\in\N_0}$ and $(I_n^0,I_n^1)_{n\in\N_0}$ be independent
sequences of random variables with finite second moments that satisfy the initial conditions
$$ X_n^i=Z_n^i=0, \quad i\in\Sigma, n\leq 1$$
and, for all $n\geq 2$ and $i\in\Sigma$
\begin{equation}
\label{split_for_var}
\begin{pmatrix}  X_{n}^{i} \\ Z_{n}^{i} \end{pmatrix} \stackrel{d}{=}
\begin{pmatrix}  X_{I_n^i}^{0} \\ Z_{I_n^i}^{0} \end{pmatrix}+\begin{pmatrix}  X_{n-I_n^i}^{1} \\ Z_{n-I_n^i}^{1} \end{pmatrix}
+\begin{pmatrix}  \eta_n^{i,1} \\ \eta_n^{i,2} \end{pmatrix},
\end{equation}
where the toll terms are given by $\eta_n^{i,1}=\eta_n^{i,2}=0$ for $n\leq 1$ and
\begin{equation} \label{def_etani_split_toll}
\begin{aligned}
\eta_n^{i,1} & := \frac 1 H \left(n\log(n)-\E \left[I_n^i \log \left(  {I_n^i} \right) + (n-I_n^i) \log \left (  {n-I_n^i} \right) \right]\right) \\
& \quad + \pi_{1-i} \frac {H_{1-i}-H_i} H n + \frac{H_1-H_0}{(p_{01}+p_{10})H} p_{i0}p_{i1}^{n-1} n,\qquad n\geq 2,\\
\eta_n^{i,2}&:=n-\eta_n^{i,1}.
\end{aligned}
\end{equation}
Since we have $\eta_n^{i,1}+\eta_n^{i,2}=n$, note that the sum $S_n^i:=X_n^i+Z_n^i$
satisfies the same initial conditions and the same stochastic recurrence as $B_n^i$, i.e.~equation
(\ref{decomp_zero_one}) and $S_n^i=0=B_n^i$ for $n\leq 1$. In particular, this implies that $S_n^i$ and $B_n^i$
have the same mean and variance. A discussion on the existence of a splitting satisfying (\ref{split_for_var})
and the equality of the moments of $S_n^i$ and $B_n^i$ is given in section \ref{sec:exsplitting}.\\

\noindent
{\bf Remark.} The choice of $\eta_n^{i,1}$ is motivated as follows: Since $Z_n^i$ should be small ($\E[Z_n^i]=\bo(n)$, $\V(Z_n^i)=\bo(n)$), $X_n^i$ should satisfy
$\E[X_n^i]\sim \frac 1 H n\log (n)$ which is the reason for the choice of the first summand in (\ref{def_etani_split_toll}).
The linear term is chosen to obtain $\eta_n^{i,1}\sim n$ and therefore $\eta_n^{i,2}=o(n)$ which implies a small
variance for $Z_n^i$. The last summand is chosen for some technical reasons to compensate the second one in the calculation of $\E[X_n^i]$.\\

The proof of theorem \ref{thm:variance} works as follows: first we
study the asymptotics of $\V(X_n^i)$ and $\V(Z_n^i)$ and then deduce the asymptotics of $\V(B_n^i)$ by the
following Lemma:
\begin{lem}
 \label{lem_var_sum}
For any random variables $X,Y$ with finite second moments we have
\begin{align}
 \label{var_sum_bounds}
\left(\sqrt{\V(X)}-\sqrt{\V (Y)}\right)^2 \leq \V (X+Y) \leq \left(\sqrt{\V (X)}+ \sqrt{\V(Y)}\right)^2.
\end{align}
In particular, if sequences $(X_n)_{n\geq 0}, (Y_n)_{n\geq 0}$ with finite second moments
satisfy $\V (Y_n) =o(\V(X_n))$ then we have
\begin{align}
 \label{var_sum_asymp}
\V(X_n+Y_n) = \V(X_n) + \bo \left( \sqrt{\V (X_n)\V(Y_n)}\right).
\end{align}
\end{lem}
\begin{proof}
By the Cauchy-Schwarz inequality we have
$$| \CV (X,Y) | \leq \sqrt{\V (X)} \sqrt{\V (Y)}$$
which together with $\V(X+Y)= \V(X)+\V(Y) + 2 \CV (X,Y)$ implies (\ref{var_sum_bounds}).
Moreover, (\ref{var_sum_bounds}) obviously implies (\ref{var_sum_asymp}).
\end{proof}
The analysis of $\V(X_n^i)$ is done with lemma \ref{transf_lem_first_order}. This requires a detailed asymptotic expansion of $\E[X_n^i]$.
The choice of $\eta_n^{i,1}$ leads to the following representation of the mean:
\begin{lem}
 \label{lem_mean_changed_toll} Let $(X_n^i)_{n\in\N_0, i\in\Sigma}$ be as in (\ref{split_for_var}). Then we have for all $n\in\N_0$
\begin{align}
\label{mean_changed_toll}
 \E[X_n^0]= \frac 1 H n\log n + \frac {H_1-H_0}{(p_{01}+p_{10})H} n \textbf{1}_{\{n\geq 2\}}, \qquad \E[X_n^1]=\frac 1 H n \log n.
\end{align}
\end{lem}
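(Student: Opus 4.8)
The plan is to identify $\E[X_n^0]$ and $\E[X_n^1]$ as the \emph{unique} solution of the system of relations that \eqref{split_for_var} forces on them, and then to check that the functions on the right-hand side of \eqref{mean_changed_toll} form such a solution. Since the toll vector in \eqref{split_for_var} is deterministic, taking expectations in the first coordinate gives, for $n\ge 2$ and $i\in\Sigma$,
\[
\E[X_n^i]=\E[X_{I_n^i}^0]+\E[X_{n-I_n^i}^1]+\eta_n^{i,1},
\]
with $\E[X_n^i]=0$ for $n\le 1$; these expectations are finite by the standing assumption that the $X_n^i$ have finite second moments. I would first note that this system determines $(\E[X_n^0],\E[X_n^1])_{n\ge 0}$ uniquely: given the values at $0,\dots,n-1$, the pair $(\E[X_n^0],\E[X_n^1])$ solves a linear equation whose coefficient matrix is $I-P^{(n)}$ with $P^{(n)}:=(p_{ij}^{\,n})_{i,j\in\Sigma}$, and for $n\ge 2$ this matrix is invertible, because $P^{(n)}$ is a nonnegative matrix with row sums $p_{i0}^n+p_{i1}^n<p_{i0}+p_{i1}=1$ and hence has spectral radius strictly below $1$. (Equivalently, one can introduce the difference of $\E[X_\cdot^i]$ and the claimed right-hand side and show by induction on $n$ that this difference is annihilated by $I-P^{(n)}$, hence vanishes.)

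It then remains to verify that $g_0(n):=\tfrac1H n\log n+d_0\,n\Ind_{\{n\ge 2\}}$ and $g_1(n):=\tfrac1H n\log n$, with $d_0:=\tfrac{H_1-H_0}{(p_{01}+p_{10})H}$, satisfy these same initial conditions and recurrence. The initial conditions are immediate ($g_i(0)=g_i(1)=0$, using $0\log 0=0$ and $\log 1=0$), so the content is in substituting $g_0,g_1$ and the definition \eqref{def_etani_split_toll} of $\eta_n^{i,1}$ into the recurrence. The one nonroutine point is the treatment of the indicator in $g_0$: one has $\E[g_0(I_n^i)]=\tfrac1H\E[I_n^i\log I_n^i]+d_0\,\E[I_n^i\Ind_{\{I_n^i\ge 2\}}]$ with $\E[I_n^i\Ind_{\{I_n^i\ge 2\}}]=np_{i0}-\Prob(I_n^i=1)=np_{i0}\bigl(1-p_{i1}^{\,n-1}\bigr)$. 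The first summand of $\eta_n^{i,1}$ is exactly what cancels the two $\E[\,\cdot\log\cdot\,]$ contributions, and the last summand $\tfrac{H_1-H_0}{(p_{01}+p_{10})H}p_{i0}p_{i1}^{\,n-1}n$ is exactly what cancels the exponentially small correction $-d_0\,np_{i0}p_{i1}^{\,n-1}$; after these cancellations the recurrence collapses to
\[
\E[g_0(I_n^i)]+\E[g_1(n-I_n^i)]+\eta_n^{i,1}=\tfrac1H n\log n+\Bigl(d_0 p_{i0}+\pi_{1-i}\tfrac{H_{1-i}-H_i}{H}\Bigr)n .
\]

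Finally I would check the single algebraic identity $d_0 p_{i0}+\pi_{1-i}\tfrac{H_{1-i}-H_i}{H}=d_i$ (with $d_1:=0$), which is precisely what makes the linear term of $\eta_n^{i,1}$ the correct one so that the right-hand side above equals $g_i(n)$ for $n\ge 2$; for $i=1$ it reduces to $d_0 p_{10}=\pi_0\tfrac{H_1-H_0}{H}$ and for $i=0$ to $d_0 p_{01}=\pi_1\tfrac{H_1-H_0}{H}$, both of which are immediate from the explicit values $\pi_0=\tfrac{p_{10}}{p_{01}+p_{10}}$, $\pi_1=\tfrac{p_{01}}{p_{01}+p_{10}}$ in \eqref{stat_dist}. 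The main obstacle is organizational rather than conceptual: one must track the exponentially small $p_{i1}^{\,n-1}$ terms carefully (this is why the indicator $\Ind_{\{n\ge 2\}}$ is attached to $g_0$, forcing $g_0(1)=0$) and confirm that the prefactor of the linear term in \eqref{def_etani_split_toll} matches $d_i$; once those cancellations are arranged, the remaining arithmetic is routine and the uniqueness step closes the proof.
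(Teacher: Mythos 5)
Your proposal is correct and follows essentially the same route as the paper: take expectations in \eqref{split_for_var}, observe that the initial conditions together with the resulting recursion determine $\E[X_n^i]$ uniquely, and verify that the claimed formulas satisfy them. You in fact supply the details the paper leaves to the reader (the invertibility of $I-P^{(n)}$ behind the uniqueness, and the cancellation of the $\E[\,\cdot\log\cdot\,]$ and $p_{i1}^{\,n-1}$ terms plus the identity $d_0p_{i0}+\pi_{1-i}(H_{1-i}-H_i)/H=d_i$), and these details check out.
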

\begin{proof}
 Let $\nu_X^i:\N_0\rightarrow\R$ be given by $\nu_X^i(n):=\E[X_n^i]$, $i\in\{0,1\}$. Note that $\nu_X^i$ is uniquely determined by its initial conditions $\nu_X^i(n)=0$ for $n\leq 1$ and
the recursion
\begin{align*}
 \nu_X^i(n)=\E[\nu_X^0(I_n^i)]+\E[\nu_X^1(n-I_n^i)]+\eta_n^{i,1},\quad i\in\Sigma,\, n\geq 2,
\end{align*}
which arises from the recursion \eqref{split_for_var}. Thus, it only remains to check that the choice given in \eqref{mean_changed_toll}
satisfies these conditions which is an easy calculation. Details are left to the reader.
\end{proof}
These expressions and lemma \ref{transf_lem_first_order} lead to the following asymptotics of $\V(X_n^i)$:
\begin{lem}\label{lem_var_changed_toll}
We have for both $i\in\Sigma$ as $n\rightarrow\infty$
$$\V \left(X_n^i\right)= \sigma^2 n \log n + \bo(n)$$
where $\sigma^2$ is given by \eqref{var_formula}.
\end{lem}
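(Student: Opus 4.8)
The plan is to set up a recursion of the first kind \eqref{rec_transfer_first_kind} for the variances $V_i(n):=\V(X_n^i)$ and then to apply lemma \ref{transf_lem_first_order}. First I would condition on $I_n^i$ in the recurrence \eqref{split_for_var}: since $(X_k^0,Z_k^0)_{k\geq 0}$, $(X_k^1,Z_k^1)_{k\geq 0}$ and $(I_n^i)_n$ are independent and the toll $\eta_n^{i,1}$ is deterministic, the conditional variance decomposition gives, for all $n$,
\begin{align*}
V_i(n)=\E[V_0(I_n^i)]+\E[V_1(n-I_n^i)]+\varepsilon_i(n),\qquad \varepsilon_i(n):=\V\bigl(\nu_X^0(I_n^i)+\nu_X^1(n-I_n^i)\bigr),
\end{align*}
where $\nu_X^i(n)=\E[X_n^i]$ is given explicitly by lemma \ref{lem_mean_changed_toll}. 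In view of that lemma and of lemma \ref{transf_lem_first_order}, it then suffices to prove that, for both $i\in\Sigma$,
\begin{align*}
\varepsilon_i(n)=\sigma_i^2\,n+\bo(n^\alpha)\quad\text{for some }\alpha<1,\qquad\text{where }\ \sigma_i^2:=\frac{p_{i0}p_{i1}}{H^2}\left(\log(p_{i0}/p_{i1})+\frac{H_1-H_0}{p_{01}+p_{10}}\right)^{2},
\end{align*}
and then to check by a direct algebraic comparison with \eqref{var_formula} that $(\pi_0\sigma_0^2+\pi_1\sigma_1^2)/H=\sigma^2$.

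For the estimate on $\varepsilon_i(n)$ I would write $D_n:=I_n^i-np_{i0}$ and $h(x):=x\log x$, and use the binomial moments $\E[D_n]=0$, $\V(D_n)=np_{i0}p_{i1}$ and $\E[D_n^4]=\bo(n^2)$. Fix $\delta\in(\max_{i,j}p_{ij},1)$ and set $G_n:=\{I_n^i\in[(1-\delta)n,\delta n]\}$. On $G_n$, and for $n$ large enough that $(1-\delta)n\geq 2$, lemma \ref{lem_mean_changed_toll} gives $\nu_X^0(I_n^i)+\nu_X^1(n-I_n^i)=\frac1H\bigl(h(I_n^i)+h(n-I_n^i)\bigr)+\frac{H_1-H_0}{(p_{01}+p_{10})H}I_n^i$; a second-order Taylor expansion of $h$ at the points $np_{i0}$ and $np_{i1}$, in which the constant $+1$ contributions of $h'$ cancel, then yields
\begin{align*}
\nu_X^0(I_n^i)+\nu_X^1(n-I_n^i)=A_i(n)+b_iD_n+R_n,\qquad b_i:=\frac1H\left(\log(p_{i0}/p_{i1})+\frac{H_1-H_0}{p_{01}+p_{10}}\right),
\end{align*}
with $A_i(n)$ deterministic and, since $h''(x)=1/x=\bo(1/n)$ on $[(1-\delta)n,n]$, a remainder obeying $|R_n|\leq CD_n^2/n$ on $G_n$; globally one still has the crude bound $|R_n|=\bo(n\log n)$. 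Putting $W_n:=A_i(n)+b_iD_n$ one has the exact identity $\V(W_n)=b_i^2\V(D_n)=\sigma_i^2\,n$, while $\V(R_n)=\bo(1)$ because $\E[R_n^2\Ind_{G_n}]\leq C^2\E[D_n^4]/n^2=\bo(1)$ and $\E[R_n^2\Ind_{G_n^c}]=\bo\bigl((n\log n)^2\Prob(G_n^c)\bigr)=\bo(1)$ by the Chernoff bound $\Prob(G_n^c)\leq e^{-cn}$. A Cauchy--Schwarz estimate on the covariance then gives $\varepsilon_i(n)=\V(W_n)+2\CV(W_n,R_n)+\V(R_n)=\sigma_i^2\,n+\bo(\sqrt n)$, and lemma \ref{transf_lem_first_order} completes the argument.

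The step I expect to be the main obstacle is the control of the Taylor remainder $R_n$: since $h''(x)=1/x$ explodes near $0$, the expansion is only valid on the bulk event $G_n$, so every expectation has to be split according to $G_n$ and its complement, and the atypical event in which one bucket is nearly empty --- in particular the values $I_n^i\in\{0,1\}$ at which the indicator $\Ind_{\{n\geq 2\}}$ in $\nu_X^0$ matters --- must be absorbed separately, using the exponential tail bound for the binomial distribution together with the deterministic bound $0\leq h(x)\leq n\log n$ for $x\in[0,n]$. All errors produced this way lie far below the $\bo(n)$ tolerated in the statement, so the only genuinely quantitative input is the exact variance $\V(W_n)=\sigma_i^2\,n$, which requires no approximation.
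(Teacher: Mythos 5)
Your proposal is correct and takes essentially the same route as the paper: the conditional-variance recursion with toll $\V(\nu_X^0(I_n^i)+\nu_X^1(n-I_n^i))$, the explicit means from lemma \ref{lem_mean_changed_toll}, linearization of $h(I_n^i)+h(n-I_n^i)$ around the mean to identify the coefficient $b_i^2 p_{i0}p_{i1}\,n$, and an application of lemma \ref{transf_lem_first_order}. The only (minor) technical difference is that you control the nonlinear remainder by a second-order Taylor expansion together with the binomial fourth moment, while the paper uses the mean value theorem with a self-centering argument; both give errors far below the $\bo(n)$ needed.
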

\begin{proof}
 Let $V_X^i(n):=\V(X_n^i)$ and $\nu_X^i(n):=\E[X_n^i]$ as in the previous proof. Then, the recursion \eqref{split_for_var} and the independence therein imply
\begin{align}\label{split_var_changed_toll}
 V_X^i(n)= \E[V_X^0(I_n^i)]+\E[V_X^1(n-I_n^i)] + \V(\nu_X^0(I_n^i)+\nu_X^1(n-I_n^i)).
\end{align}
It suffices to derive the first order asymptotic of
$\V(\nu_X^0(I_n^i)+\nu_X^1(n-I_n^i))$ to apply lemma \ref{transf_lem_first_order}.
To this end, note that by lemma \ref{lem_mean_changed_toll} with the notation $h(x):=x\log x$
\begin{align}\label{pf_var_changed_toll}
 \V(\nu_X^0(I_n^i)+\nu_X^1(n-I_n^i))= \V\left( \frac {h(I_n^i)+h(n-I_n^i)} H +\frac {H_1-H_0}{(p_{01}+p_{10})H} I_n^i +R_n^i\right)
\end{align}
where $R_n^i=-\frac {H_1-H_0}{(p_{01}+p_{10})H} \Ind_{\{I_n^i=1\}}$
and thus, $\V(R_n^i)=o(1)$.
Subtracting $n\log n$ in the variance on the right hand side of
\eqref{pf_var_changed_toll} yields
\begin{align*}
 \V(\nu_X^0(I_n^i)+\nu_X^1(n-I_n^i))=\V\left( \frac 1 H (I_n^i\log p_{i0}+(n-I_n^i)\log p_{i1})+\frac {H_1-H_0}{(p_{01}+p_{10})H} I_n^i+\widetilde{R}_n^i\right)
\end{align*}
where $\widetilde{R}_n^i=R_n^i+ \frac 1 H (I_n^i (\log(I_n^i/n) -\log p_{i0})+(n-I_n^i) (\log(1-I_n^i/n)-\log(p_{i1})))$. It is not hard to check that
$\V(\widetilde{R}_n^i)=\bo(\log n)$, as formally proved below.
Therefore, combined with lemma \ref{lem_var_sum} and  $\V(I_n^i)=p_{i0}p_{i1}n$
\begin{align*}
\V(\nu_X^0(I_n^i)+\nu_X^1(n-I_n^i))=\left(\frac 1 H \left(\log p_{i0}-\log p_{i1}+\frac {H_1-H_0}{(p_{01}+p_{10})}\right)\right)^2 p_{i0}p_{i1} n +\bo(n^{2/3}).
\end{align*}
Hence, the assertion follows by \eqref{split_var_changed_toll} and
lemma \ref{transf_lem_first_order}.\\

\noindent
To complete the proof we now establish that
$\V(\widetilde{R}_n^i)=O(\log n)$. Note that the function
\begin{align*}
 \phi:[0,1]\rightarrow\R,\quad x \to x(\log x -\log p_{i0}) + (1-x) (\log(1-x) -\log(1-p_{i0}))
\end{align*}
is bounded and that the derivative is given by  $\phi^\prime(x)= \log(x/p_{i0})- \log((1-x)/(1-p_{i0}))$. In particular,
there exists a constant $C>0$ such that for all sufficiently large $n$
\begin{align*}
 |\phi^\prime(x)| \leq C \sqrt{\frac {\log n } n },\qquad x\in\left[p_{i0}- \sqrt{(\log n ) /n}\,,\,p_{i0}+\sqrt{(\log n)/n}\right].
\end{align*}
One obtains
$$
\V(\phi(I_n^i/n) \Ind_{\{|I_n^i-n p_{i0}|\geq \sqrt{n \log n}\}})=\bo (n^{-2})
$$
by the boundedness of $\phi$ and a standard Chernoff bound and, by the previous
observations, the mean value theorem and a self centering argument
(let $J_n^i$ be an independent copy of $I_n^i$)
\begin{align*}
 &\V\left(\phi(I_n^i/n) \Ind_{\{|I_n^i-np_{i0}| < \sqrt{ n \log n}\}}\right)\\
&= \frac 1 2 \E\left[ \left( \phi(I_n^i/n) \Ind_{\{I_n^i-np_{i0}| < \sqrt{ n \log n}\}}
-\phi(J_n^i/n) \Ind_{\{|J_n^i-np_{i0}| < \sqrt{ n \log n}\}}\right)^2\right]\\
&=\frac  {C^2} 2 \frac {\log n } n \E\left[\left( I_n^i/n -J_n^i/n\right)^2\right]+\bo\left(n^{-2}\right)=\bo\left(\frac {\log n}{n^2}\right).
\end{align*}
The bound on $\V(\widetilde{R}_n^i)$ follows by lemma \ref{lem_var_sum} since $\widetilde{R}_n^i=R_n^i + n \phi(I_n^i/n)$ and $\V(R_n^i)=o(1)$.
\end{proof}
In order to derive the asymptotics of $\V(Z_n^i)$ we start with an upper bound on $\eta_n^{i,2}$:
\begin{lem}
 \label{lem_asym_etani}
For $\eta_n^{i,2}$ defined in (\ref{def_etani_split_toll}) we have for both $i\in\Sigma$, as $n\rightarrow\infty$
$$\eta_n^{i,2}=\bo\left(\log n \right).$$
\end{lem}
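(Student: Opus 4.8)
The starting point is the identity $\eta_n^{i,2}=n-\eta_n^{i,1}$ from \eqref{def_etani_split_toll}, so it suffices to show $\eta_n^{i,1}=n+\bo(\log n)$; in fact the argument below even yields $\eta_n^{i,1}=n+\bo(1)$. Of the three summands defining $\eta_n^{i,1}$, the last one, $\frac{H_1-H_0}{(p_{01}+p_{10})H}\,p_{i0}p_{i1}^{n-1}n$, tends to $0$ geometrically (since $p_{i1}<1$) and is thus $o(1)$. It therefore remains to handle
\begin{align*}
\frac 1H\Bigl(n\log n-\E[h(I_n^i)+h(n-I_n^i)]\Bigr)+\pi_{1-i}\frac{H_{1-i}-H_i}{H}\,n,
\end{align*}
where $h(x):=x\log x$ (with $h(0)=0$, consistent with the convention of the paper).

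First I would scale out the factor $n$ inside the expectation. Since $h(k)=n\,h(k/n)+k\log n$ for $0\le k\le n$, we get $h(I_n^i)+h(n-I_n^i)=n\,g(I_n^i/n)+n\log n$ with $g(x):=h(x)+h(1-x)$, hence
\begin{align*}
n\log n-\E[h(I_n^i)+h(n-I_n^i)]=-n\,\E\bigl[g(I_n^i/n)\bigr].
\end{align*}
The function $g$ is continuous on $[0,1]$ with $g(p_{i0})=p_{i0}\log p_{i0}+p_{i1}\log p_{i1}=-H_i$, and it is smooth on $(0,1)$ with $g''(x)=1/(x(1-x))$, which is bounded on any compact subinterval of $(0,1)$. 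The key step is the estimate $\E[g(I_n^i/n)]=-H_i+\bo(1/n)$: by a Chernoff bound $\Prob(|I_n^i-np_{i0}|>\varepsilon n)\le e^{-cn}$ for suitable $\varepsilon,c>0$, and on the complementary event a second-order Taylor expansion of $g$ at $p_{i0}$ has bounded coefficients and, together with $\E[I_n^i/n-p_{i0}]=0$ and $\V(I_n^i/n)=p_{i0}p_{i1}/n$, gives the claim; the excluded boundary regime (where $g$ is bounded but $g''$ is not) contributes only exponentially little. This is the same type of computation as in the proof of Lemma~\ref{transf_lem_first_order}, but with the error term tightened. Plugging in, $\frac 1H(n\log n-\E[h(I_n^i)+h(n-I_n^i)])=-\frac nH\,\E[g(I_n^i/n)]=\frac{H_i}{H}n+\bo(1)$.

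Adding the linear term gives $\eta_n^{i,1}=\frac{H_i}{H}n+\pi_{1-i}\frac{H_{1-i}-H_i}{H}n+\bo(1)$, and since $\pi_i+\pi_{1-i}=1$ and $\pi_0H_0+\pi_1H_1=H$ by \eqref{def_ent},
\begin{align*}
\frac{H_i}{H}+\pi_{1-i}\frac{H_{1-i}-H_i}{H}=\frac{\pi_iH_i+\pi_{1-i}H_{1-i}}{H}=1,
\end{align*}
so $\eta_n^{i,1}=n+\bo(1)$ and therefore $\eta_n^{i,2}=n-\eta_n^{i,1}=\bo(1)=\bo(\log n)$ for both $i\in\Sigma$. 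The only non-routine point is the key estimate $\E[g(I_n^i/n)]=-H_i+\bo(1/n)$ — a cruder $\bo(\log n/n)$ would already be enough — i.e.\ making the Taylor-plus-concentration argument clean near the endpoints of $[0,1]$; everything else is elementary bookkeeping.
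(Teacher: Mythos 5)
Your proof is correct and follows essentially the same route as the paper: reduce via $\eta_n^{i,2}=n-\eta_n^{i,1}$ to the estimate $n\log n-\E[h(I_n^i)+h(n-I_n^i)]=H_i\,n+\mathrm{error}$ using concentration of $I_n^i/n$ around $p_{i0}$, then invoke the cancellation $\pi_i H_i+\pi_{1-i}H_{1-i}=H$. The only difference is the local estimate: the paper bounds the centered entropy function by a first-order (mean-value) argument on a window of width $\sqrt{(\log n)/n}$, giving $\eta_n^{i,2}=\bo(\log n)$, whereas your second-order Taylor expansion on a fixed window gives the slightly stronger $\eta_n^{i,2}=\bo(1)$, which of course implies the stated bound.
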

\begin{proof}
By the definition of $\eta_n^{i,2}$ in \eqref{def_etani_split_toll} one only needs to compute the asymptotic of
\begin{align*}
 h(n)-\E[h(I_n^i)]-\E[h(n-I_n^i)],\qquad h(n):=n \log n.
\end{align*}
Since $h(n)=\E[I_n^i \log n]+\E[(n-I_n^i)\log n]$, one obtains
\begin{align*}
  h(n)-\E[h(I_n^i)]-\E[h(n-I_n^i)]&=- n(\E[h(I_n^i/n)]+\E[h(1-I_n^i/n)])=n H_i - n \E[\phi(I_n^i/n)]
\end{align*}
where $H_i=-p_{i0}\log p_{i0}-p_{i1}\log p_{i1}$ and $\phi(x)= x(\log x - \log p_{i0})+(1-x)(\log(1-x)-\log(1-p_{i0})$. With the same arguments as at
the end of the previous proof one obtains $|\phi(x)|=\bo((\log n) /n)$ uniformly for $x\in[p_{i0}-\sqrt{(\log n) /n},p_{i0}+\sqrt{(\log n)/n}$ which implies
by a standard Chernoff bound on the binomial distribution that $n\E[\phi(I_n^i/n)]=\bo(\log n)$. Hence, $\eta_n^{i,1}=n + \bo(\log n)$ since
$H=\pi_0 H_0 +\pi_1 H_1$ and the assertion follows since $\eta_n^{i,2}=n-\eta_n^{i,1}$.
\end{proof}

Note that we have the following Lipschitz-continuity of the means:

\begin{lem}\label{lem_lip_small}
 For $i\in\Sigma$ let $\nu_Z^i : \N_0\rightarrow \R$ be given by
$$\nu_Z^i(n)=\E[Z_n^i],$$
where $(Z_n^i)_{n\in\N_0, i\in\Sigma}$ satisfies (\ref{split_for_var}).
Then, the functions $\nu_Z^0$ and $\nu_Z^1$ are Lipschitz continuous,
i.e.~there exists a constant $C>0$
such that for $i\in\Sigma$ and $n,m\in\N_0$ we have
$$|\nu_Z^i(n)-\nu_Z^i(m)| \leq C|n-m|.$$
\end{lem}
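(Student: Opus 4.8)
The plan is to identify $\nu_Z^i$ with a difference of two sequences that we already control, arranged so that the only super-linear contribution, $\frac1H n\log n$, cancels. The starting observation is that, since $\eta_n^{i,1}+\eta_n^{i,2}=n$, the sum $S_n^i:=X_n^i+Z_n^i$ satisfies exactly the same initial conditions and the same distributional recurrence \eqref{decomp_zero_one} as $B_n^i$; as noted right after \eqref{def_etani_split_toll}, this gives $\E[S_n^i]=\E[B_n^i]$ for every $n\in\N_0$ and $i\in\Sigma$. Hence
$$\nu_Z^i(n)=\E[S_n^i]-\E[X_n^i]=\E[B_n^i]-\E[X_n^i],\qquad n\in\N_0,\ i\in\Sigma.$$

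Next I would insert the explicit forms of the two terms on the right. By definition \eqref{def_fi}, $\E[B_n^i]=\frac1H n\log n+f_i(n)$, while Lemma \ref{lem_mean_changed_toll} gives $\E[X_n^0]=\frac1H n\log n+\frac{H_1-H_0}{(p_{01}+p_{10})H}\,n\Ind_{\{n\ge 2\}}$ and $\E[X_n^1]=\frac1H n\log n$. Subtracting, the $\frac1H n\log n$ terms drop out and we are left with
$$\nu_Z^0(n)=f_0(n)-\frac{H_1-H_0}{(p_{01}+p_{10})H}\,n\Ind_{\{n\ge 2\}},\qquad\nu_Z^1(n)=f_1(n),\qquad n\in\N_0.$$

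It then remains only to check that both right-hand sides are Lipschitz on $\N_0$. Proposition \ref{thEwLip} supplies a constant with $|f_i(m)-f_i(n)|\le C_1|m-n|$ for both $i$; the sequence $n\mapsto n\Ind_{\{n\ge 2\}}$ is Lipschitz on $\N_0$ (for $m,n$ both $\ge 2$ the increment equals $|m-n|$, for $m,n$ both $\le 1$ it is $0$, and for $m\le 1<n$ one has $n\le|m-n|+1\le 2|m-n|$ because $|m-n|\ge 1$); and a finite linear combination of Lipschitz sequences is Lipschitz. Taking $C$ to be the resulting constant proves the lemma.

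I do not expect a genuine obstacle: the real work has been done in Lemma \ref{lem_mean_changed_toll}, which is engineered precisely so that the non-Lipschitz part $\frac1H n\log n$ of $\E[X_n^i]$ coincides with that of $\E[B_n^i]$. As a robustness check one can also argue directly on $\nu_Z^i$ without invoking $\E[S_n^i]=\E[B_n^i]$: from \eqref{split_for_var} one has $\nu_Z^i(n)=\E[\nu_Z^0(I_n^i)]+\E[\nu_Z^1(n-I_n^i)]+\eta_n^{i,2}$, so Lemma \ref{lemdelta_app} yields $\Delta\nu_Z^i(n)=p_{i0}\E[\Delta\nu_Z^0(I_n^i)]+p_{i1}\E[\Delta\nu_Z^1(n-I_n^i)]+\Delta\eta_n^{i,2}$; the computation already performed in the proof of Proposition \ref{thEwLip}, together with $H=\pi_0 H_0+\pi_1 H_1$ and $\pi_0+\pi_1=1$, shows that the constant term of $\Delta\eta_n^{i,2}$ vanishes, so $\Delta\eta_n^{i,2}=\bo(n^{-1/2})$, and then Lemma \ref{lem_transf_inc} gives $\Delta\nu_Z^i(n)=\bo(1)$, the same conclusion. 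The only mildly delicate point in this variant is verifying this cancellation.
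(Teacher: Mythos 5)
Your proposal is correct and follows essentially the same route as the paper: the paper's proof also rests on $\E[X_n^i+Z_n^i]=\E[B_n^i]$ together with Proposition \ref{thEwLip} and Lemma \ref{lem_mean_changed_toll}, so that the $\frac{1}{H}n\log n$ terms cancel and $\nu_Z^i$ is a Lipschitz sequence plus a linear term. You merely spell out the details (and add an optional alternative argument) that the paper leaves implicit.
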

\begin{proof}
 Since we have
$$\E[X_n^i+Z_n^i]=\E[B_n^i]$$
the assertion immediately follows from proposition \ref{thEwLip} and lemma \ref{lem_mean_changed_toll}.
\end{proof}

The next step is to show that $\V(Z_i)=O(n)$ which we present in
lemma~\ref{lem_var_small} below. However, to establish it we need
another key ingredient, namely poissonization. In poissonization
one replaces $n$ by a Poisson $\Pi(\lambda)$ distributed random
variable $N$ to derive asymptotics as $\lambda\rightarrow \infty$.
%({\red WS: Later you write $N$ for normal!})
This turns out to be easier than the original problem owing to some nice
properties of the Poisson process such as independence of
the splitting  processes.
The transfer lemma used after
poissonization is the following:
\begin{lem}
 \label{lem_asym_pois_var}
For $i\in\Sigma$ let $f_i:\R^+ \rightarrow \R$ be some function that is bounded on $(0,a]$ for all $a>0$.
Assume that there exist constants $p_0,p_1\in(0,1)$ such that for all $x>0$ and $i\in\Sigma$
\begin{align}
 \label{rec_func_small_toll}
f_i(x) = f_i(x p_{i}) + f_{1-i} (x (1-p_{i})) + \eta_i(x)
\end{align}
where $\eta_i:\R^+\rightarrow \R$ is some function.

Then, as $x\rightarrow\infty$, $\eta_i(x) = \bo( x^{1-\alpha})$ for some $\alpha>0$ and both $i\in\Sigma$ implies
$$f_i(x) =\bo (x),\quad i\in\Sigma.$$
\end{lem}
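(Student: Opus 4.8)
The plan is to mimic the discrete transfer lemmas (Lemma~\ref{lem_transf} and Lemma~\ref{lem_transf_inc}) in the continuous, poissonized setting, where the binomial splitting is replaced by exact independence: a Poisson$(\lambda)$ number of items thinned by the first bit yields independent Poisson$(\lambda p_i)$ and Poisson$(\lambda(1-p_i))$ subpopulations, which is exactly the self-similar structure encoded in \eqref{rec_func_small_toll}. Since here the split is deterministic in $x$ (no concentration argument needed), the iteration of \eqref{rec_func_small_toll} along the binary tree of rescalings will be cleaner than in the discrete case. First I would set $\delta:=\max\{p_0,p_1,1-p_0,1-p_1\}<1$ and introduce the monotone majorants $g_i(x):=\sup\{|f_i(y)|:0<y\le x\}$ and $g(x):=\max\{g_0(x),g_1(x)\}$, which are finite and nondecreasing by the hypothesis that each $f_i$ is bounded on bounded intervals. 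The goal reduces to showing $g(x)=\bo(x)$.

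Next I would iterate the recursion. Unfolding \eqref{rec_func_small_toll} once on both summands, any argument $x$ is replaced by two arguments, each at most $\delta x$, and the accumulated toll after $k$ levels is a sum of $2^k$ terms whose arguments are bounded by $\delta^k x$ and whose toll contributions are controlled by $C(\delta^{k} x)^{1-\alpha}$ each — but crucially, because the total ``mass'' is conserved (the $2^k$ arguments at level $k$ sum to $x$), summing $\sum_j (\text{argument}_j)^{1-\alpha}$ at a fixed level is at most $x^{1-\alpha}$ times the number of terms raised to a power, or better, is dominated using $\sum a_j^{1-\alpha} \le (\sum a_j)^{1-\alpha}\cdot(\#\text{terms})^{\alpha}$ by Hölder; one stops the recursion once the arguments drop below a fixed threshold (say $1$), after roughly $k\approx -\log x/\log\delta$ levels, at which point the remaining $f_i$-values are bounded by a constant times the number of such leaves. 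Collecting the geometric series $\sum_{j\ge 0}(\delta^{1-\alpha})^{j}\cdot(\text{something linear in }x)$ — exactly as in the last display of the proof of Lemma~\ref{lem_transf} — yields $g(x)\le C' x$ provided the per-level growth factor is summable, which holds because $\delta<1$ and $\alpha>0$ force $\delta^{1-\alpha}<1$ (note $1-\alpha<1$; if $\alpha\ge 1$ the toll is $\bo(1)$ and the estimate is only easier). The bookkeeping that the number of ``active'' branches times $\delta^{k(1-\alpha)}$ stays geometrically small is the analogue of the Chernoff step in the discrete proof, but here it is purely deterministic.

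The main obstacle I anticipate is organizing the two-variable ($f_0$ versus $f_1$) branching so that the recursion closes: at each node the child with scaling $p_i$ feeds into $f_i$ while the child with scaling $1-p_i$ feeds into $f_{1-i}$, so the labels alternate in a way that is not a single fixed-point map but a system, just as \eqref{rec_transfer_first_kind} was. The clean fix is precisely the device already used twice in Section~\ref{sec:transfer}: work with the single majorant $g=\max\{g_0,g_1\}$, which satisfies the \emph{scalar} super-recursion $g(x)\le g(x p_i)+g(x(1-p_i))+C x^{1-\alpha}$ for whichever $i$ achieves the max (together with the monotonicity fallback $g(x)\le g(x^-)$ absorbed into the sup), after which the system disappears and the one-dimensional geometric-sum estimate applies verbatim. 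A minor secondary point is that $x$ ranges over a continuum rather than $\N_0$, so the induction ``on $n$'' must be replaced by an induction over dyadic-type scales $\delta^{-k}\le x<\delta^{-(k+1)}$, or equivalently a direct estimate via the unfolded sum; either way no new idea is needed beyond what Lemma~\ref{lem_transf} already contains.
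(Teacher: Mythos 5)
Your high-level plan --- pass to the monotone majorant $g(x)=\max_i\sup_{0<y\le x}|f_i(y)|$, iterate the deterministic recursion, and close with a convergent geometric series --- is exactly the route the paper takes (its proof is a short induction on the scales $x\in[1,p_\vee^{-n}]$, $p_\vee:=\max\{p_0,p_1,1-p_0,1-p_1\}$, giving $|f_i(x)|\le Cx\sum_j p_\vee^{\alpha j}$). However, the bookkeeping you commit to would not deliver $\bo(x)$. First, your geometric ratio is the wrong one: for a toll of order $x^{1-\alpha}$ the series must be $\sum_j \delta^{\alpha j}$ (the paper's $p_\vee^{\alpha j}$), not $\sum_j(\delta^{1-\alpha})^j$. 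In the inductive absorption step one needs $Cx^{1-\alpha}\le C'\,x\,r^{L(x)}$ with $L(x)\approx \log x/\log(1/\delta)$, i.e.\ $r^{L(x)}\approx x^{-\alpha}$, which forces $r=\delta^{\alpha}$; with $r=\delta^{1-\alpha}$ one only gets $x\,r^{L(x)}\approx x^{\alpha}$, so the step fails whenever $\alpha<1/2$ --- precisely the regime in which the lemma is later applied (the toll in \eqref{pois_var_rec} is $\lambda^{3/4}\sqrt{\log\lambda}$, so $\alpha$ is just below $1/4$).

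Second, the unfolded per-level accounting does not work as described. Since $\delta\ge 1/2$ (strictly, except in the excluded symmetric case), the maximal depth is $K\approx\log x/\log(1/\delta)\ge\log_2 x$, so $2^K$ is polynomially larger than $x$: the H\"older bound $x^{1-\alpha}\,2^{k\alpha}$ summed up to level $K$, and likewise ``a constant times the number of such leaves'' read as $2^K$ leaves, give bounds polynomially worse than $x$. Even if you cap the number of active nodes per level by $x$ via mass conservation (an active argument exceeds $1$, so its weight exceeds $1/x$), the levels between $\log_2 x$ and $K$ each contribute up to order $x$, so this accounting certifies only $\bo(x\log n)$-type bounds, not $\bo(x)$; and your claim that ``the number of active branches times $\delta^{k(1-\alpha)}$ stays geometrically small'' is false for $\delta\ge 1/2$. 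The decay must be organized along paths, or equivalently by scales as in the paper's induction: an active ancestor $h$ levels above the stopping antichain has argument at least a constant times $\delta^{-h}$, so distributing each toll $(x w_I)^{1-\alpha}$ over the leaves below it in proportion to their weights produces the factor $\delta^{\alpha h}$ along every path, while the leaf antichain has total weight $1$ with each leaf weight at least $\min\{p_0,1-p_0,p_1,1-p_1\}/x$ (this, not the depth bound, is why the leaf term is $\bo(x)$). With these corrections your argument collapses into the paper's proof; as written, the two quantitative steps above are genuine gaps.
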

\begin{proof}
Iterating (\ref{rec_func_small_toll}), by
induction on $n$ we find that for a sufficiently large constant $C>0$
and all $n\in\N$
\begin{align*}
|f_i(x)| \leq C x \sum_{j=0}^{\left\lfloor - \frac{\log x} {\log p_\vee} \right\rfloor} p_\vee^{\alpha j},\qquad x\in [1, p_\vee^{-n}],\, i\in\{0,1\},
\end{align*}
where $p_\vee:=\max\{p_0,p_1,1-p_0,1-p_1\}$. The assertion follows since the sum converges as $x\rightarrow\infty$. Details on the induction are left to the reader.
\end{proof}
The crucial part after poissonization is to transfer the asymptotics as $\lambda\rightarrow\infty$ into asymptotics of the original problem. One way of doing this is the next lemma:
\begin{lem}\label{depois_lem}
Let $(a(n))_{n\in\N_0}$ be a real valued sequence. Moreover, let $N_\lambda$ be Poisson distributed with mean $\lambda>0$.
Then, as $n\rightarrow \infty$, $\Delta a(n):=a(n+1)-a(n)=\bo(\sqrt{n})$ implies
$$|a(n)-\E[a(N_n)]| =\bo\left( n \right).$$
\end{lem}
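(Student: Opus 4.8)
The plan is to expand the de-Poissonization error as a weighted average of increments of $a$ and to control those increments by telescoping the difference sequence. Writing $\E[a(N_n)]-a(n)=\sum_{k\ge 0}\Prob(N_n=k)\,(a(k)-a(n))$, the first step is to bound $a(k)-a(n)$. Since $\Delta a(j)=\bo(\sqrt{j})$ holds for all large $j$ and only finitely many indices are excluded, the implied constant may be enlarged so that $|\Delta a(j)|\le C_1\sqrt{j+1}$ for every $j\ge 0$. Summing this over the $|k-n|$ indices strictly between $\min(k,n)$ and $\max(k,n)$ (the three cases $k>n$, $k<n$, $k=n$ being handled separately, which is what produces the symmetric bound) gives
$$|a(k)-a(n)|\le C_1\,|k-n|\,\sqrt{\max(k,n)}\le C_1\,|k-n|\left(\sqrt{k}+\sqrt{n}\right),\qquad k\in\N_0.$$

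Next I would substitute this into the averaged representation and take expectations, reducing the statement to a second-moment estimate for the Poisson law:
$$|a(n)-\E[a(N_n)]|\le C_1\,\E\!\left[|N_n-n|\left(\sqrt{N_n}+\sqrt{n}\right)\right].$$
By the Cauchy--Schwarz inequality the right-hand side is at most
$$C_1\left(\E\!\left[(N_n-n)^2\right]\right)^{1/2}\left(\E\!\left[\left(\sqrt{N_n}+\sqrt{n}\right)^2\right]\right)^{1/2}.$$
Here $\E[(N_n-n)^2]=\V(N_n)=n$, while $\E[(\sqrt{N_n}+\sqrt{n})^2]=\E[N_n]+2\sqrt{n}\,\E[\sqrt{N_n}]+n\le 4n$ using $\E[N_n]=n$ and Jensen's inequality $\E[\sqrt{N_n}]\le\sqrt{\E[N_n]}=\sqrt{n}$. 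Multiplying, $|a(n)-\E[a(N_n)]|\le 2C_1 n=\bo(n)$, as claimed.

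There is no serious obstacle here; the proof is entirely elementary. The only points requiring a little care are absorbing the finitely many initial indices (where the hypothesis $\Delta a(j)=\bo(\sqrt{j})$ is not yet asserted) into the constant $C_1$, and keeping track of the telescoping direction so that the bound on $|a(k)-a(n)|$ is symmetric in $k$ and $n$; after that the estimate follows from Cauchy--Schwarz together with the standard facts $\E[N_n]=\V(N_n)=n$ for the Poisson distribution.
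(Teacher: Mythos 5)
Your proof is correct and follows essentially the same route as the paper: a telescoping bound of the form $|a(k)-a(n)|\le C\,|k-n|(\sqrt{k}+\sqrt{n})$ obtained from $\Delta a(j)=\bo(\sqrt{j})$, followed by the Cauchy--Schwarz inequality together with $\E[(N_n-n)^2]=n$. The paper states the increment bound as $|a(n)-a(m)|\le C\sqrt{n+m}\,|n-m|$, which is the same estimate up to a constant, so there is nothing substantive to add.
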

\begin{proof}
First note that $\Delta a(n)=\bo(\sqrt n)$ implies that there exists a constant $C>0$ such
that for all $n,m\in\N_0$
$$|a(n)-a(m)| =\left| \sum_{i=m\wedge n}^{m\vee n-1} \Delta a(i) \right|\leq C \sqrt{n+m} |n-m|.$$
Hence, we have that
\begin{align*}
|a(n)-\E[a(N_n)]|&\leq \E[|a(n)-a(N_n)|]
\leq C \E[ \sqrt{n+N_n} |N_n-n|]
\end{align*}
which implies the assertion by the Cauchy-Schwarz inequality.
\end{proof}

This finally leads to the following bounds on $\V(Z_n^0)$ and $\V(Z_n^1)$
which we present next.

\begin{lem}
\label{lem_var_small}
We have for both $i\in\Sigma$, as $n\rightarrow \infty$
$$\V(Z_n^i) = \bo( n ).$$
\end{lem}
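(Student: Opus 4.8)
The plan is to exploit the recursive structure \eqref{split_for_var} for $Z_n^i$, poissonize, apply the transfer lemma \ref{lem_asym_pois_var}, and then de-poissonize via lemma \ref{depois_lem}. First I would extract the variance recursion: from \eqref{split_for_var} and the independence built into the model,
\begin{align*}
\V(Z_n^i) = \E[\V(Z_{I_n^i}^0)] + \E[\V(Z_{n-I_n^i}^1)] + \V\bigl(\nu_Z^0(I_n^i) + \nu_Z^1(n-I_n^i) + \eta_n^{i,2}\bigr),
\end{align*}
where $\nu_Z^i(n) = \E[Z_n^i]$. The key point is that the toll term here is small: by lemma \ref{lem_lip_small} the functions $\nu_Z^0, \nu_Z^1$ are Lipschitz, and $\eta_n^{i,2} = \bo(\log n)$ by lemma \ref{lem_asym_etani}. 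A Lipschitz function of $I_n^i$ has variance $\bo(\V(I_n^i)) = \bo(n)$, so the toll term in the variance recursion is $\bo(n)$, in fact $\bo(n)$ comfortably — one could even aim for $\bo(n)$ with room to spare since $\V(I_n^i) = p_{i0}p_{i1}n$.

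Next I would pass to the Poisson model: replace $n$ by a Poisson$(\lambda)$ random variable $N_\lambda$ and set $g_i(\lambda) := \V(Z_{N_\lambda}^i)$ (or, more carefully, work with the variance of the poissonized process built from \eqref{split_for_var}). Under poissonization the two subproblems split into independent Poisson processes with parameters $\lambda p_{i0}$ and $\lambda p_{i1}$, so $g_i$ satisfies a recurrence of the exact shape \eqref{rec_func_small_toll} with $p_i = p_{i0}$, namely $g_i(\lambda) = g_0(\lambda p_{i0}) + g_1(\lambda(1-p_{i0})) + \eta_i(\lambda)$, where $\eta_i(\lambda)$ is the poissonized toll plus a variance-of-the-random-index correction term; the earlier estimates give $\eta_i(\lambda) = \bo(\lambda)$, hence certainly $\eta_i(\lambda) = \bo(\lambda^{1-\alpha})$ fails — so here I must be careful. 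The transfer lemma \ref{lem_asym_pois_var} needs $\eta_i(x) = \bo(x^{1-\alpha})$ with $\alpha > 0$, i.e. strictly sublinear, whereas the naive bound on the toll is only $\bo(x)$. This is the main obstacle, and the fix is the same self-centering idea used in the proof of lemma \ref{lem_var_changed_toll}: the Lipschitz bound on $\nu_Z^i$ should be upgraded, on the concentration window $|I_n^i - np_{i0}| < \sqrt{n\log n}$, to control via the mean value theorem, yielding a variance contribution of order $n \cdot (\log n/n) = \bo(\log n)$ rather than $\bo(n)$; combined with $\eta_n^{i,2} = \bo(\log n)$ this makes the toll $\bo(\log n) = \bo(n^{1-\alpha})$ for any $\alpha < 1$. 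Alternatively, if only a Lipschitz (not differentiable) bound on $\nu_Z^i$ is available, one argues that $\nu_Z^i$ itself, being the mean of the small process, has slope $o(1)$ on average (indeed the leading $n\log n/H$ terms cancel in $\nu_Z^i$ by lemma \ref{lem_mean_changed_toll}), giving the needed sublinearity.

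Granting $\eta_i(\lambda) = \bo(\lambda^{1-\alpha})$, lemma \ref{lem_asym_pois_var} yields $g_i(\lambda) = \V(Z_{N_\lambda}^i) = \bo(\lambda)$. Finally I de-poissonize: the increments $\Delta\nu_Z^i(n) = \bo(1)$ by lemma \ref{lem_lip_small}, so one checks that $\Delta \V(Z_n^i) = \bo(\sqrt n)$ — this follows from writing $\V(Z_n^i)$ through the recursion and using lemma \ref{lemdelta_app} together with the $\bo(n)$ toll bound and a Cauchy--Schwarz estimate — and then lemma \ref{depois_lem} gives $|\V(Z_n^i) - \E[\V(Z_{N_n}^i)]| = \bo(n)$. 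Since $\E[\V(Z_{N_n}^i)] = \bo(n)$ is not literally $g_i(n)$ but its average, one more application of the concentration of $N_n$ and the Lipschitz-type control closes the gap. Combining, $\V(Z_n^i) = \bo(n)$ for both $i \in \Sigma$, which is the assertion. The delicate point throughout is keeping the toll terms strictly sublinear; everything else is bookkeeping already set up in sections \ref{sec:transfer}--\ref{sec:var}.
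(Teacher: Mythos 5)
Your overall route---set up the variance recursion from \eqref{split_for_var}, poissonize, apply the transfer Lemma \ref{lem_asym_pois_var}, then depoissonize via Lemma \ref{depois_lem} after checking $\Delta \V(Z_n^i)=\bo(\sqrt n)$ with Lemmas \ref{lemdelta_app} and \ref{lem_transf_inc}---is the same as the paper's, and your endgame is in order. But there is a genuine gap at the decisive middle step. You assert that the poissonized recursion still carries an $\bo(\lambda)$ toll (a ``variance-of-the-random-index correction''), and you propose to repair this by upgrading the Lipschitz bound on $\nu_Z^i$ to get an $\bo(\log n)$ toll via the self-centering/mean-value argument from Lemma \ref{lem_var_changed_toll}. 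That repair does not work: in Lemma \ref{lem_var_changed_toll} the $\bo(\log n)$ bound hinged on the specific function $\phi$ having derivative of order $\sqrt{(\log n)/n}$ on the concentration window (because $\phi'(p_{i0})=0$), whereas for $\nu_Z^i$ all you have is Lipschitz continuity with a fixed constant (Lemma \ref{lem_lip_small}); $\E[Z_n^i]$ is in general of order $n$ with merely \emph{bounded}, not vanishing, increments, so $\V\bigl(\nu_Z^0(I_n^i)+\nu_Z^1(n-I_n^i)\bigr)$ cannot be pushed below $\bo(n)$ by this route, and your ``slope $o(1)$ on average'' alternative is likewise unsubstantiated. (A sanity check: if the fixed-$n$ toll really were $\bo(n^{1-\alpha})$, Lemma \ref{lem_transf} would give $\V(Z_n^i)=\bo(n)$ directly and poissonization would be superfluous---a sign the fix misreads what poissonization is for.)

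What actually makes the argument work, and what the paper does, is that after poissonization the subtree sizes $N_{\lambda p_{i0}}=I^i_{N_\lambda}$ and $M_{\lambda p_{i1}}=N_\lambda-I^i_{N_\lambda}$ are \emph{independent} Poisson variables, so $Z^0_{N_{\lambda p_{i0}}}$ and $Z^1_{M_{\lambda p_{i1}}}$ are genuinely independent and
\begin{align*}
\V\bigl(Z^0_{N_{\lambda p_{i0}}}+Z^1_{M_{\lambda p_{i1}}}\bigr)=\widetilde V_0(\lambda p_{i0})+\widetilde V_1(\lambda p_{i1})
\end{align*}
exactly: the variance-of-conditional-means term you worry about never appears as a toll, being absorbed into the subtree variances themselves. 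The only extra contributions are $\V(\eta^{i,2}_{N_\lambda})=\bo(\sqrt\lambda)$ (from Lemma \ref{lem_asym_etani}) and the covariance cross terms, which Lemma \ref{lem_var_sum} bounds by $\bo\bigl(\sqrt{\lambda\log\lambda\cdot\sqrt\lambda}\bigr)=\bo\bigl(\lambda^{3/4}\sqrt{\log\lambda}\bigr)$; for this you need the a priori rough bound $\V(Z^i_n)=\bo(n\log n)$, obtained from the fixed-$n$ recursion with its $\bo(n)$ toll and Lemma \ref{lem_transf}, a step your write-up omits. With the poissonized toll thus genuinely sublinear, Lemma \ref{lem_asym_pois_var} gives $\widetilde V_i(\lambda)=\bo(\lambda)$, and your depoissonization then finishes the proof as in the paper.
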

\begin{proof} Let $V_Z^i(n):=\V(Z_n^i)$ and $\nu_Z^i(n):=\E[Z_n^i]$.
 First note that similar arguments to the ones given in the proof of lemma \ref{lem_var_changed_toll} reveal that
\begin{align}\label{var_rec_small}
 V_Z^i(n)=\E[V_Z^0(I_n^i)]+\E[V_Z^1(n-I_n^i)]+\V(\nu_Z^0(I_n^i)+\nu_Z^1(n-I_n^i)).
\end{align}
Since $\nu_Z^0$ and $\nu_Z^1$ are Lipschitz-continuous, we have $\V(\nu_Z^0(I_n^i)+\nu_Z^1(n-I_n^i))=\bo(n)$ which can be proven by a self centering argument
similar to the one at the end of the proof of lemma \ref{lem_var_changed_toll}. Thus, lemma \ref{lem_transf} yields the rough upper bound
\begin{align}\label{var_small_toll_rough_upper}
 \V(Z_n^i)=\bo(n\log n).
\end{align}
In order to refine this bound, let $N_\lambda$ be a Poisson distributed random variable with mean $\lambda>0$ which is independent
of $\{Z_n^i, I_n^i :  n\geq 0,i\in\{0,1\}\}$.
Then, (\ref{split_for_var}) implies for both $i\in\Sigma$
\begin{align}\label{pois_rec_small}
 Z_{N_\lambda}^i\stackrel{d}{=} Z_{N_{\lambda p_{i0}}}^0+ Z_{M_{\lambda p_{i1}}}^{1} + \eta_{N_\lambda}^{i,2}
\end{align}
where $N_{\lambda p_{i0}}:=I_{N_\lambda}^i$ and $M_{\lambda p_{i1}}:= N_\lambda- I_{N_\lambda}^i$. It is a well known fact, e.g.~from Poisson processes,
 that $N_{\lambda p_{i0}}$ and $M_{\lambda p_{i1}}$ are independent and Poisson distributed with
means $\lambda p_{i0}$ and $\lambda p_{i1}$.

Note that $V_Z^i(n)=\bo( n \log n)$ and the Lipschitz continuity of $\nu_Z^i$ imply that, as $\lambda\rightarrow\infty$
\begin{align}\label{pois_var_upper_bound}
\V( Z_{N_\lambda }^i )=\E[V_Z^i(N_\lambda)]+\V(\nu_Z^i(N_\lambda)) =\bo(\lambda \log \lambda),\quad i\in\Sigma,
\end{align}
where $\E[N_\lambda \log(N_\lambda)]=\bo(\lambda \log \lambda)$ is not hard to check (details are given in the appendix, lemma \ref{lemPoisAs}).
Moreover, Lemma \ref{lem_asym_etani} implies, as $\lambda\rightarrow\infty$
\begin{align}\label{pois_toll_upper_bound}
\V(\eta_{N_\lambda}^{i,2})=\bo\left(\E[ (\log (N_\lambda+1))^2\right)=\bo\left(\sqrt{\lambda}\right),
\end{align}
where the second bound holds since $(\log(n+1))^2=\bo(\sqrt n)$ and $\E[\sqrt{N_\lambda}]=\bo(\sqrt{\lambda})$ as $\lambda\rightarrow\infty$ (details are given in the appendix,
lemma \ref{lemPoisAs}).
Hence, (\ref{pois_rec_small}) implies for $\widetilde{V}_i(\lambda):=\V(Z_{N_\lambda}^i)$
\begin{align}
\widetilde{V}_i(\lambda)&=\V(Z_{N_{\lambda p_{i0}}}^0+ Z_{M_{\lambda p_{i1}}}^{1} + \eta_{N_\lambda}^{i,2})\nonumber\\
&=\V(Z_{N_{\lambda p_{i0}}}^0+ Z_{M_{\lambda p_{i1}}}^{1})+\bo\left( \lambda^{3/4}\sqrt{\log \lambda}\right)\nonumber\\
&= \widetilde{V}_0(\lambda p_{i0})+ \widetilde{V}_{1}(\lambda p_{i1})+\bo\left( \lambda^{3/4}\sqrt{\log \lambda)}\right).\label{pois_var_rec}
\end{align}
in which the second equality holds by (\ref{pois_var_upper_bound}), (\ref{pois_toll_upper_bound}) and Lemma \ref{lem_var_sum}
and the last equality holds since $Z_{N_{\lambda p_{i0}}}^0$ and $Z_{M_{\lambda p_{i1}}}^{1}$ are independent (which is one of the reason for poissonization).

Lemma \ref{lem_asym_pois_var} yields the refined upper bound
\begin{align}\label{pois_var_bound}
\widetilde{V}_i(\lambda)=\bo(\lambda).
\end{align}
Finally, we need to deduce asymptotic results for $V_Z^i(n)$ out of (\ref{pois_var_bound}).
Since we have for both $i\in\Sigma$
$$\V(Z_{N_\lambda }^i)= \E[V_Z^i(N_\lambda ) ] + \V(\nu_Z^i(N_\lambda))$$
and, by the Lipschitz continuity of $\nu_Z^i$ that $\V(\nu_Z^i(N_\lambda))=\bo(\lambda)$,
we may conclude that, as $\lambda\rightarrow\infty$
\begin{align}\label{pois_var_pf_1}\E[V_Z^i(N_\lambda ) ]=\bo(\lambda).\end{align}
In order to apply Lemma \ref{depois_lem} we need to check that
\begin{align}\label{zz_fuer_depois}
\Delta V_Z^i(n)=\bo(\sqrt{n})
\end{align}
which may be done by the transfer theorem \ref{lem_transf_inc}:
First note that (\ref{var_rec_small}) and
Lemma \ref{lemdelta_app} imply for the differences
$$\Delta V_Z^i(n)= p_{i0} \E[\Delta V_Z^0(I_n^i)] + (1-p_{i0})\E[\Delta V_Z^{1}(n-I_n^i)]+\varepsilon_i(n),$$
where $\varepsilon_i$ is given by
$$\varepsilon_i(n)=\V(\nu_Z^0(I_{n+1}^i)+\nu_Z^{1}(n+1-I_{n+1}^i))-\V(\nu_Z^0(I_n^i)
+\nu_Z^{1}(n-I_n^i)).$$
The Lipschitz-continuity of $\nu_Z^i$ yields $\V(\nu_Z^0(I_n^i)+\nu_Z^{1}(n-I_n^i))=\bo(n)$. Moreover,
\begin{align*}
 \V(\nu_Z^0(I_{n+1}^i)+\nu_Z^{1}(n+1-I_{n+1}^i))= \V\left( \nu_Z^0(I_n^i)+\nu_Z^1(n-I_n^i) + B \Delta \nu_Z^0(I_n^i) + (1-B) \Delta \nu_Z^1(n-I_n^i)\right)
\end{align*}
where $B$ is independent of $I_n^i$ and Bernoulli distributed with parameter $p_{i0}$. Since $\Delta \nu_Z^0$ and $\Delta \nu_Z^1$ are bounded,
we may conclude by lemma \ref{lem_var_sum} that
\begin{align*}
 \V(\nu_Z^0(I_{n+1}^i)+\nu_Z^{1}(n+1-I_{n+1}^i))=\V(\nu_Z^0(I_n^i)+\nu_Z^{1}(n-I_n^i))+\bo(\sqrt n)
\end{align*}
which implies $\varepsilon_i(n)=\bo(\sqrt n)$ and therefore, $\Delta V_Z^i(n)=\bo(\sqrt n)$ by lemma \ref{lem_transf_inc}.
Hence, the depoissonization lemma \ref{depois_lem} is applicable and the assertion follows.
\end{proof}
We finish the section with the proof of theorem \ref{thm:variance}:
\begin{proof}[Proof of theorem \ref{thm:variance}]
Recall that for $n\in\N_0$, $i\in\Sigma$ we have
\begin{align*}
\rho_i:=p_{i0}\delta_0+p_{i1}\delta_1,\qquad B_n^i:=B_n^{\rho_i}.
\end{align*}
Moreover, we define for $n\in\N_0$ and $i\in\Sigma$
$$V_i(n):=\V(B_n^i), \qquad \nu_i(n):=\E[B_n^{i}].$$
 We start with the proof for the special cases $\mu=\rho_i$, $i\in\Sigma$. In these cases we have by definition of $(X_n^i, Z_n^i)_{n\geq 0, i\in\Sigma}$ that
\begin{align}\label{V_i_pf_var}
V_i(n)=\V(X_n^i+Z_n^i)= \sigma^2 n \log n + \bo \left(n \sqrt{\log n}\right).
\end{align}
where the last equality holds by Lemma \ref{lem_var_changed_toll}, \ref{lem_var_small} and \ref{lem_var_sum}.

In order to obtain the result for arbitrary initial distributions $\mu$ recall that, by (\ref{decomp_initial}),
$$B_{n}^\mu=B_{K_n^\mu}^0+B_{n-K_n^\mu}^{1}+n$$
where $K_n^\mu$ is binomial $B(n,\mu(0))$ distributed.

Hence, we have by the independence of $(B_n^0)_{n\geq 0}$, $(B_n^1)_{n\geq 0}$ and $(K_n^\mu)_{n\geq0}$
\begin{align*}
 \V(B_{n}^\mu)&= \E[V_0(K_n^\mu)]+\E[V_{1}(n-K_n^\mu)] + \V(\nu_0(K_n^\mu)+\nu_{1}(n-K_n^\mu))\\
&=\sigma^2 \E [K_n^\mu\log K_n^\mu +(n-K_n^\mu)\log(n-K_n^\mu) ] + \V(\nu_0(K_n^\mu)+\nu_{1}(n-K_n^\mu)) \\
&\quad+ \bo \left(n \sqrt{\log n}\right)
\end{align*}
where the second equality holds by (\ref{V_i_pf_var}).
Therefore, it only remains to show that
\begin{align}
 \E [K_n^\mu\log K_n^\mu +(n-K_n^\mu)\log(n-K_n^\mu) ]&= n \log n + \bo \left(n \sqrt{\log n}\right),\label{thm_var_zz_1}\\
\V(\nu_0(K_n^\mu)+\nu_{1}(n-K_n^\mu))&= \bo \left(n \sqrt{\log n}\right).\label{thm_var_zz_2}
\end{align}
For (\ref{thm_var_zz_1}) note that $x\mapsto x \log x +(1-x)\log (1-x)$ is bounded on $[0,1]$ (with $0 \log 0:=0$). Therefore,
we have
\begin{align*}
 &\E [K_n^\mu\log K_n^\mu +(n-K_n^\mu)\log(n-K_n^\mu) ]-n\log n \\
=&n\E [K_n^\mu/n\log (K_n^\mu /n) +(1-K_n^\mu/n)\log(1-K_n^\mu/n) ]\\
=&\bo(n)
\end{align*}
which implies (\ref{thm_var_zz_1}).
Note that by Proposition \ref{thEwLip} we have for $i\in\Sigma$ and $n\in\N_0$
$$\nu_i(n)= \frac 1 H n \log n + f_i(n)$$
where $f_0$ and $f_1$ are Lipschitz continuous functions. Since the Lipschitz continuity implies
$\V(f_0(K_n^\mu)+f_{1}(n-K_n^\mu))=\bo(n)$, it only remains to show that
$$\V( K_n^\mu \log K_n^\mu + (n-K_n^\mu) \log (n-K_n^\mu))=\bo (n),$$
which is an easy computation and essentially covered by the proof of lemma \ref{lem_var_changed_toll}. Thus, we leave the details to the reader.
\end{proof}

\subsection{Existence of the Splitting}\label{sec:exsplitting}

In the analysis of the variance we work with pairs $(X_n^i, Z_n^i)_{n\in\N_0}$, $i\in\Sigma$, that satisfy the initial
conditions
\begin{align}\label{split_initial_app}
 X_n^i=Z_n^i=0, \quad i\in\Sigma, n\leq 1,
\end{align}
as well as the stochastic recurrences
\begin{equation}
\label{split_for_var_app}
\begin{pmatrix}  X_{n}^{i} \\ Z_{n}^{i} \end{pmatrix} \stackrel{d}{=}
\begin{pmatrix}  X_{I_n^i}^{0} \\ Z_{I_n^i}^{0} \end{pmatrix}+\begin{pmatrix}  X_{n-I_n^i}^{1} \\ Z_{n-I_n^i}^{1} \end{pmatrix}
+\begin{pmatrix}  \eta_n^{i,1} \\ \eta_n^{i,2} \end{pmatrix},\quad n\geq 2,i\in\Sigma
\end{equation}
where $(X_0^0,\ldots,X_n^0,Z_0^0,\ldots,Z_n^0)$, $(X_0^1,\ldots,X_n^1,Z_0^1,\ldots,Z_n^1)$ and $I_n^i$
are independent, $\eta_n^{i,2}=n-\eta_n^{i,1}$ and $\eta_n^{i,1}$ is some constant satisfying
$$\eta_n^{i,1}=0,\quad n\leq 1 \qquad \text{and} \qquad \eta_n^{i,1}=\bo(n) \quad (n\rightarrow\infty).$$
We now discuss how to get $(X_n^i,Z_n^i)_{n\in\N_0, i\in\Sigma}$ with finite second moment that satisfy
(\ref{split_initial_app}) and (\ref{split_for_var_app}) as well as
\begin{align}\label{moments_split}
\E[X_n^i+Z_n^i]=\E[B_n^i]\qquad \text{and}\qquad\V(X_n^i+Z_n^i)=\V(B_n^i), \quad n\in\N_0,\; i\in\Sigma.
\end{align}

By iterating (\ref{split_for_var_app}) on the right hand side
one expects
\begin{align}\label{def_split}\begin{pmatrix} X_n^i \\ Z_n^i \end{pmatrix}
\stackrel{d}{=} \begin{pmatrix} \eta_n^{i,1}+\sum_{k=1}^\infty \sum_{I:=(i_1,\ldots,i_k)\in\{0,1\}^k} \eta_{J_{i}^{I} (n)}^{i_k, 1} \\
\eta_n^{i,2}+\sum_{k=1}^\infty  \sum_{I:=(i_1,\ldots,i_k)\in\{0,1\}^k} \eta_{J_{i}^{I} (n)}^{i_k, 2}
 \end{pmatrix}
\end{align}
where $J_i^I(n)$ is some iteration of binomial distributed random variables that is generated as follows: For $n\in\N_0$ and $i\in\Sigma$
let $I_i(n):=\sum_{j=1}^n L_j^i$ where $(L_j^i)_{j\in\N}$ is a sequence of independent Bernoulli $B(p_{i0})$ distributed
random variables. Moreover, for each $k\geq 1$, $i\in\Sigma$ and $I\in\{0,1\}^k$ let $(I_{i,0}^I(n), I_{i,1}^I(n))_{n\geq 0}$
be an independent copy of $(I_i(n), n-I_i(n))_{n\geq 0}$.
Then we define for both $i\in\Sigma$
$$J_i^{(0)}(n):=I_{i}(n),\quad J_i^{(1)}(n)=n-I_{i}(n),$$
and, for $k\geq 2$ and $I=(i_1,\ldots,i_k)\in\{0,1\}^k$
$$J_i^I(n):=I_{i_{k-1},i_k}^{(i_1,\ldots.i_{k-1})}\left(J_i^{(i_1,\ldots,i_{k-1})}(n)\right).$$
In the context of radix sort $J_i^I(n)$ may be interpreted as the number of strings with prefix $I$ among $n$ i.i.d.~strings generated by a Markov source.\\

\noindent
Now let $\tau_i(n) := \min\{k\geq 1 \,:\, J_i^I(n) \leq 1 \text{ for all } I\in\{0,1\}^k\}$. Since $\eta_n^{i,1}=\eta_n^{i,2}=0$ for
$n\leq 1 $ and $i\in \{0,1\}$, note that all summands for $k\geq \tau_i$ equal zero in (\ref{def_split}). Hence, if we have
$\tau_i(n)<\infty$ then the sum in (\ref{def_split}) is finite.

We will now discuss that for every $n\in\N$ we have $\tau_i(n)<\infty$ almost surely and then use (\ref{def_split}) to define $(X_n^i,Z_n^i)$
and finally check that (\ref{split_for_var_app}) and (\ref{moments_split}) holds. To this end note that
$$M_k^i(n):=\max\{J_i^I(n) \,:\, I\in\{0,1\}^k\}$$
is bounded by $n$, non-increasing in $k$ and for $M_k(n)\geq 2$ the probability that $M_k(n)$ decreases by at least one (i.e.~$M_{k+1}(n)\leq M_k(n)-1$) is
at least $(2p(1-p))^{n/2}$, $p:=\max\{p_{ij}|i,j\in\Sigma\}$, which can be seen as follows:
At each step $k$ there are at most $n/2$ indices $I_1,\ldots, I_{n/2}\in\{0,1\}^k$ with $J_i^{I_j}(n)\geq 2$ since
we have
$$\sum_{I\in\{0,1\}^k} J_i^I(n)=n.$$
For each of these indices $I_j=(i_1^j,\ldots, i_k^j)$ the probability that the next binomial splitter decreases
$\max\{J_i^{(i_1^j,\ldots,i_k^j,0)},J_i^{(i_1^j,\ldots,i_k^j,1)}\}$ by at least one is at least $2p(1-p)$ since
starting the underlying Bernoulli chain of $(I_{i_{k}^j,0}^{I_j}(m))_{m\geq 0}$ with $01$ or $10$
causes a decrease. By the independence of $(I_{i_k^1,0}^{I_1}(m))_{m\geq 0},\ldots (I_{i_k^{n/2},0}^{I_{n/2}}(m))_{m\geq 0}$
we obtain the upper bound $(2p(1-p))^{n/2}$.

This yields that $\tau_i(n)$ is stochastically dominated by a negative
binomial \mbox{$nB(n,(2p(1-p))^{n/2})$} distributed random variable.
In particular, we have for all $n\in\N$
$$\E[\tau_i(n)]\leq \frac {n} {(2p(1-p))^{n/2}} <\infty \qquad \text{and} \qquad \V(\tau_i)<\infty.$$
This implies that mean and variance of $X_n^i$ and $Z_n^i$ defined by (\ref{def_split}) are finite since
we have $|\eta_n^{i,1}|\leq Cn$ for some constant $C>0$ which together with $\sum_{I\in\{0,1\}^k} J_i^I(n)=n$ yields
$$\E[|X_n^i|]\leq |\eta_n^{i,1}| + C n \E[|\tau_i(n)|]<\infty,\qquad \V(X_n^i)\leq \E[(|\eta_n^{i,1}|+Cn \tau_i(n))^2]<\infty$$
and similar bounds for $Z_n^i$ since $\eta_n^{i,2}=\bo(n)$.

Hence, it only remains to show that the definition (\ref{def_split}) implies (\ref{split_for_var_app}) and (\ref{moments_split}).
But (\ref{split_for_var_app}) holds by construction and is not hard to check. For (\ref{moments_split}) note that (\ref{split_initial_app})
and (\ref{split_for_var_app}) implies for the sum $S_n^i:=X_n^i+Z_n^i$ in the case $d=0$ that for both $i\in\Sigma$
$$S_n^i=0, \quad n\leq 1 \qquad\text{ and } \qquad S_n^i\stackrel{d}{=} S_{I_n^i}^0+S_{n-I_n^i}^{1}+n$$
which uniquely defines all moments of $S_n^i$ that are finite. Since $B_n^i$ satisfies the same conditions we obtain
$$\E[S_n^i]=\E[B_n^i]\qquad \text{and} \qquad \V(S_n^i)=\V(B_n^i).$$

\section{Asymptotic Normality} \label{sec:limit}

Our main result is the asymptotic normality of the number of bucket operations:
\begin{thm} \label{thm:limit}
For the number $B_n^\mu$ of bucket operations under the Markov source model with conditions (\ref{cond_prob}) we have
\begin{align}\label{limit_law_app}
 \frac{B_n^\mu - \E[B_n^\mu]}{\sqrt{n\log n}} \stackrel{d}{\longrightarrow} {\cal N}(0,\sigma^2), \qquad (n\to\infty),
\end{align}
where $\sigma^2>0$ is independent of the initial distribution $\mu$ and given by (\ref{var_formula}).
\end{thm}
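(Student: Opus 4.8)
The plan is to run the contraction method on the two--type system \eqref{decomp_zero_one} and then transfer the conclusion to an arbitrary initial distribution via \eqref{decomp_initial}. By Theorem \ref{thm:variance}, $\V(B_n^i)=\sigma^2 n\log n+\bo(n\sqrt{\log n})>0$ for all large $n$, so for $i\in\Sigma$ set $Y_n^i:=(B_n^i-\E[B_n^i])/\sqrt{\V(B_n^i)}$ whenever $\V(B_n^i)>0$ and $Y_n^i:=0$ otherwise, so that $\E[Y_n^i]=0$ and $\V(Y_n^i)=1$ \emph{exactly}. Conditioning on $I_n^i$ in \eqref{decomp_zero_one}, and writing $\nu_j(k):=\E[B_k^j]=\tfrac1H h(k)+f_j(k)$ with $h(x):=x\log x$ and $f_j$ as in \eqref{def_fi}, one obtains for $n\ge 2$
$$Y_n^i\stackrel{d}{=}A_{n,0}^{i}\,\widehat Y^{\,0}_{I_n^i}+A_{n,1}^{i}\,\widehat Y^{\,1}_{n-I_n^i}+b_n^i,\qquad A_{n,0}^{i}=\sqrt{\tfrac{\V(B^0_{I_n^i})}{\V(B_n^i)}},\quad A_{n,1}^{i}=\sqrt{\tfrac{\V(B^1_{n-I_n^i})}{\V(B_n^i)}},$$
where $\widehat Y^{\,0},\widehat Y^{\,1}$ are independent copies of $(Y_k^0)_k,(Y_k^1)_k$, independent of $I_n^i$, and, using $\nu_i(n)=\E[\nu_0(I_n^i)]+\E[\nu_1(n-I_n^i)]+n$, the toll is the \emph{self-centered} quantity
$$b_n^i=\frac{\bigl(\nu_0(I_n^i)-\E[\nu_0(I_n^i)]\bigr)+\bigl(\nu_1(n-I_n^i)-\E[\nu_1(n-I_n^i)]\bigr)}{\sqrt{\V(B_n^i)}},\qquad \E[b_n^i]=0.$$

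Next I would identify the limiting objects. Since $I_n^i/n\to p_{i0}$ in every $L^p$ and $\V(B_k^j)\sim\sigma^2 k\log k$ by Theorem \ref{thm:variance}, the coefficients converge, $(A_{n,0}^i,A_{n,1}^i)\to(\sqrt{p_{i0}},\sqrt{p_{i1}})$ in $L^s$. The decisive step — and the place where Proposition \ref{thEwLip} is indispensable — is that $b_n^i\to 0$ in $L^s$. In the numerator of $b_n^i$ the $h$--contributions appear only through $h(I_n^i)+h(n-I_n^i)$, whose derivative in $I_n^i$ is $\log\bigl(I_n^i/(n-I_n^i)\bigr)=\bo(1)$ on the overwhelmingly likely event $I_n^i\in[\varepsilon n,(1-\varepsilon)n]$ (the complement contributing negligibly by a Chernoff bound); hence a first-order expansion around $np_{i0}$, exactly as in the proofs of Lemma \ref{transf_lem_first_order} and Lemma \ref{lem_var_changed_toll}, gives $h(I_n^i)+h(n-I_n^i)-\E[\,\cdot\,]=\bo(\sqrt n)$ in $L^s$. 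Likewise $f_0(I_n^i)-\E[f_0(I_n^i)]$ and $f_1(n-I_n^i)-\E[f_1(n-I_n^i)]$ are $\bo(\sqrt n)$ in $L^s$ by the Lipschitz continuity of $f_0,f_1$ (Proposition \ref{thEwLip}), since $I_n^i$ fluctuates on scale $\sqrt n$. Dividing by $\sqrt{\V(B_n^i)}\sim\sigma\sqrt{n\log n}$ yields $b_n^i=\bo(1/\sqrt{\log n})\to 0$. (Without Proposition \ref{thEwLip} one could bound $f_0(I_n^i)-\E[f_0(I_n^i)]$ only by $\bo(n)$, which would not suffice; this is precisely the obstruction mentioned after Theorem \ref{thm:mean}, and it is the conceptual heart of the proof.) The accompanying fixed-point system is $\eta^i\stackrel{d}{=}\sqrt{p_{i0}}\,\eta^0+\sqrt{p_{i1}}\,\eta^1$ with $\eta^0,\eta^1$ independent, which, because $p_{i0}+p_{i1}=1$, is solved by $\eta^0=\eta^1=\mathcal{N}(0,1)$.

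The contraction argument proper then runs as follows. On the space of pairs $(\nu_0,\nu_1)$ of probability measures on $\R$ with mean $0$, variance $1$ and finite $s$-th absolute moment, for a suitably chosen $s\in(2,3]$, put the product Zolotarev metric $\zeta^\star\bigl((\mu_0,\mu_1),(\nu_0,\nu_1)\bigr):=\max\{\zeta_s(\mu_0,\nu_0),\zeta_s(\mu_1,\nu_1)\}$. The map
$$T(\nu_0,\nu_1):=\Bigl(\,\mathcal{L}\bigl(\sqrt{p_{00}}Z_0+\sqrt{p_{01}}Z_1\bigr)\,,\ \mathcal{L}\bigl(\sqrt{p_{10}}Z_0+\sqrt{p_{11}}Z_1\bigr)\,\Bigr),\qquad Z_j\sim\nu_j\ \text{independent},$$
maps this (complete) space into itself — means, variances and finiteness of $s$-th moments are preserved, again using $p_{i0}+p_{i1}=1$ — and, by the ideality of order $s$ of $\zeta_s$ together with $t^{s/2}\le t$ on $(0,1)$, it is a strict $\zeta^\star$--contraction with factor $\xi:=\max_{i\in\Sigma}\bigl(p_{i0}^{s/2}+p_{i1}^{s/2}\bigr)<1$; here condition \eqref{cond_prob} guarantees $\sigma^2>0$, so that after the back-rescaling below the limit is a genuine Gaussian and not a point mass. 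Hence $T$ has the unique fixed point $(\mathcal{N}(0,1),\mathcal{N}(0,1))$. One still needs the uniform bound $\sup_n\E|Y_n^i|^s<\infty$, obtained by induction from the displayed recursion using the $L^s$--boundedness of $(b_n^i)$ and the fact that $\E[(Y_n^i)^2]=1$ holds exactly — this is exactly why the precise variance expansion of Theorem \ref{thm:variance} is needed, namely to keep $\zeta_s$ finite for $s>2$. With this in hand, the standard convergence lemma of the contraction method (compare $Y_n^i$ with $A_{n,0}^iW_0+A_{n,1}^iW_1+b_n^i$ for independent $W_0,W_1\sim\mathcal{N}(0,1)$, use ideality and convexity of $\zeta_s$, and dispose of the small-index terms by a Chernoff bound) gives the self-improving estimate $\limsup_n\Delta_n\le\xi\limsup_n\Delta_n$ for $\Delta_n:=\max_i\zeta_s\bigl(Y_n^i,\mathcal{N}(0,1)\bigr)$, whence $\Delta_n\to0$ and $Y_n^i\stackrel{d}{\longrightarrow}\mathcal{N}(0,1)$. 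Multiplying by $\sqrt{\V(B_n^i)}/\sqrt{n\log n}\to\sigma$ (Theorem \ref{thm:variance}) yields $\bigl(B_n^i-\E[B_n^i]\bigr)/\sqrt{n\log n}\stackrel{d}{\longrightarrow}\mathcal{N}(0,\sigma^2)$, i.e.\ the theorem for $\mu\in\{\rho_0,\rho_1\}$.

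Finally, an arbitrary $\mu=\mu_0\delta_0+\mu_1\delta_1$ is handled through \eqref{decomp_initial}. If $\mu_0\in\{0,1\}$ then $B_n^\mu=n+B_n^{\rho_{1-\mu_0}}$, so its centered law coincides with that of $B_n^{\rho_{1-\mu_0}}$ and the case is already done. If $\mu_0\in(0,1)$, decompose $B_n^\mu$ exactly as above with $K_n\sim B(n,\mu_0)$ in place of $I_n^i$: the two coefficients converge to $(\sqrt{\mu_0},\sqrt{\mu_1})$, the two blocks $\bigl(B^0_{K_n}-\E[B^0_{K_n}\mid K_n]\bigr)/\sqrt{K_n\log K_n}$ and $\bigl(B^1_{n-K_n}-\E[\,\cdot\mid K_n]\bigr)/\sqrt{(n-K_n)\log(n-K_n)}$ converge in distribution to $\mathcal{N}(0,\sigma^2)$ (by the case just proved, since $K_n,n-K_n\to\infty$) and remain conditionally independent given $K_n$, and the toll tends to $0$ by the same self-centering/Lipschitz estimate. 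Hence $\bigl(B_n^\mu-\E[B_n^\mu]\bigr)/\sqrt{n\log n}\stackrel{d}{\longrightarrow}\sqrt{\mu_0}\,Z_0+\sqrt{\mu_1}\,Z_1=\mathcal{N}(0,\sigma^2)$ with $Z_0,Z_1$ i.i.d.\ $\mathcal{N}(0,\sigma^2)$. The only genuinely hard point throughout is the control of the toll terms in the absence of a sharp expansion of the mean, resolved by the self-centering identity $\E[b_n^i]=0$ combined with Proposition \ref{thEwLip}; the uniform third-moment bound and the legitimacy of $\zeta_s$ for $s>2$ ride on the exact second-moment information of Theorem \ref{thm:variance}.
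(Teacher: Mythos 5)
Your proposal is correct and follows essentially the same route as the paper: exact standardization of $B_n^0,B_n^1$, the two-type recursion \eqref{decomp_zero_one} treated by the contraction method in the Zolotarev metric $\zeta_s$ ($2<s\le 3$) with an accompanying Gaussian sequence, the toll controlled by the self-centering identity together with Proposition \ref{thEwLip} and Theorem \ref{thm:variance} to get $\bo(1/\sqrt{\log n})$, the contraction factor $\max_{i\in\Sigma}\{p_{i0}^{s/2}+p_{i1}^{s/2}\}<1$, and the transfer to general $\mu$ via \eqref{decomp_initial}. The only deviation is your insistence on a uniform bound $\sup_n\E|Y_n^i|^s<\infty$, which the paper's argument avoids (it only needs finiteness of the $s$-th moments for each fixed $n$ and boundedness of $\|Q_n^i\|_s$), but this extra step is obtainable and harmless.
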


As in the analysis of the mean, we first derive limit laws for $B_n^0$ and $B_n^1$ and then transfer these to a limit law for $B_n^\mu$ via (\ref{decomp_initial}). We abbreviate for $i\in\Sigma$ and $n\in\N_0$
\begin{align*}
\nu_i(n):=\E[B_n^i],   \qquad \sigma_i(n):=\sqrt{\V(B_n^i)}.
\end{align*}
Note that we have $\nu_i(0)=\nu_i(1)=\sigma_i(0)=\sigma_i(1)=0$ and $\sigma_i(n)>0$ for all $n\ge 2$.
We define the standardized variables by
\begin{align}\label{normal_rec}
Y^i_n := \frac{B_n^i - \E[B_n^i]}{\sigma_i(n)}, \qquad i\in\Sigma, n\ge 2,
\end{align}
and $Y^i_0:=Y^i_1:=0$.

Our proof if based on an application of the contraction method to the recursive distributional system (\ref{decomp_zero})--(\ref{decomp_one}). The Zolotarev metric used here has been
studied in the context of the contraction method systematically in \cite{NeRu04}. We only need
the following properties, see Zolotarev \cite{zo76,zo77}:
For distributions
${\cal L}(X)$, ${\cal L}(Y)$ on $\R$ the Zolotarev distance $\zeta_s$, $s>0$, is defined by

\begin{equation}
\label{eq:3.6_app} \zeta_s(X,Y) := \zeta_s({\cal L}(X),{\cal L}(Y)):=\sup_{f\in {\cal F}_s}|\E[f(X) -
f(Y)]|
\end{equation}
where $s=m+\alpha$ with $0<\alpha\le 1$,
$m\in\N_0$, and
\begin{equation}
{\cal F}_s:=\{f\in
C^m(\R,\R):\|f^{(m)}(x)-f^{(m)}(y)\|\le
\|x-y\|^\alpha\},
\end{equation}
 the space of $m$ times
continuously differentiable functions from
$\R$ to $\R$ such that the $m$-th
derivative is H\"older continuous of order
$\alpha$ with H\"older-constant $1$.
We have that $\zeta_s(X,Y)<\infty$, if all
moments of orders $1,\ldots,m$ of $X$ and $Y$ are
equal and if the $s$-th absolute moments of $X$ and
$Y$ are finite.  Since later on only the case $2<s\le 3$ is
used, for finiteness of  $\zeta_s(X,Y)$ it is thus sufficient  for these
$s$ that  mean and variance of
$X$ and $Y$ coincide and both have a finite absolute moment of
 order $s$.

\noindent
{\bf Properties of $\zeta_s$}:
(1)  Convergence in $\zeta_s$ implies weak convergence on $\R$.
\\
(2) $\zeta_s$ is $(s,+)$ ideal, i.e., we have
\begin{eqnarray*}
\zeta_s(X+Z,Y+Z)\le\zeta_s(X,Y), \quad  \zeta_s(cX,cY) = c^s \zeta_s(X,Y)
\end{eqnarray*}
for all  $Z$ being independent of $(X,Y)$ and all $c>0$.

We will use an upper bound of $\zeta_s$ by the minimal $L_p$ metric $\ell_p$.
For distributions
${\cal L}(X)$, ${\cal L}(Y)$ on $\R$ and  $p>0$ we have
\begin{eqnarray*}
\ell_p(X,Y):= \ell_p({\cal L}(X),{\cal L}(Y)):= \inf\left\{\|X'-Y'\|_p\;:\;X'\stackrel{d}{=} X,    Y'\stackrel{d}{=} Y\right\},
\end{eqnarray*}
where $\|X\|_p:=(\E \|X\|^p)^{(1/p) \wedge 1}$ denotes the $L_p$ norm. We have  $\ell_p(X,Y)<\infty$ if $\|X\|_p, \|Y\|_p<\infty$. The bound  used later  for $2<s\le 3$ is, see Lemma 5.7 in \cite{DrJaNe08},
\begin{equation}\label{ralph1_app}
\zeta_s(X,Y)\le \left( (\E \|X\|^s)^{1-1/s} +(\E \|Y\|^s)^{1-1/s}
  \right)\ell_s(X,Y),
\end{equation}
for all $X$ and $Y$ with joint mean and variance and finite absolute moments of order $s$.

\begin{prop} \label{conv_zolo_prop_app}
For both sequences $(Y^i_n)_{n\ge 0}$, $i\in\Sigma$, we have for all $2<s\le 3$
\begin{align} \label{conv_zolo_app}
\zeta_s\left(Y^i_n, {\cal N}(0,1)\right) \to 0,\qquad (n\to\infty).
\end{align}
\end{prop}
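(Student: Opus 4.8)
The plan is to apply the contraction method to the bivariate system of standardized variables $(Y_n^0, Y_n^1)$ arising from \eqref{decomp_zero}--\eqref{decomp_one}, using the product version of the Zolotarev metric on pairs, i.e.\ $\zeta_s$ applied componentwise and then maximized (or summed) over $i\in\Sigma$. First I would rewrite the recursion \eqref{decomp_zero_one} in standardized form: dividing $B_n^i-\nu_i(n)$ by $\sigma_i(n)$ and inserting the recursive decomposition yields, for $n\ge 2$,
\begin{align*}
Y_n^i \stackrel{d}{=} \frac{\sigma_0(I_n^i)}{\sigma_i(n)} \bar Y^0_{I_n^i} + \frac{\sigma_1(n-I_n^i)}{\sigma_i(n)} \bar Y^1_{n-I_n^i} + b_n^i,
\end{align*}
where $\bar Y^0, \bar Y^1$ are independent copies of the respective families (independent of $I_n^i$), and the toll term is
\begin{align*}
b_n^i = \frac{1}{\sigma_i(n)}\left( \nu_0(I_n^i)+\nu_1(n-I_n^i)+n-\nu_i(n)\right).
\end{align*}
The candidate limit system is the fixed point $(Y^0, Y^1) = ({\cal N}(0,1), {\cal N}(0,1))$ of the limiting map, which one verifies using the stability of the normal distribution under the limiting coefficients.

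Second, I would identify the limits of the coefficients. Using Theorems \ref{thm:mean} and \ref{thm:variance}, $\sigma_i(n)^2 = \sigma^2 n\log n + \bo(n\sqrt{\log n})$, so $\sigma_0(I_n^i)/\sigma_i(n)$ and $\sigma_1(n-I_n^i)/\sigma_i(n)$ concentrate (as $n\to\infty$, on the event $I_n^i\approx p_{i0}n$) around $\sqrt{p_{i0}}$ and $\sqrt{p_{i1}}$ respectively, since the $\log$ factors cancel asymptotically. Because $p_{i0}+p_{i1}=1$, the squared coefficients sum to $1$, which is exactly the contraction condition $\sum (\text{coeff})^s < 1$ for $s>2$ (as $p_{i0}^{s/2}+p_{i1}^{s/2}<1$ strictly for $s>2$, using $p_{ij}\in(0,1)$). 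For the toll term, the crucial point is that $b_n^i \to 0$ in a suitable sense: by Proposition \ref{thEwLip}, $\nu_i(n) = \frac1H n\log n + f_i(n)$ with $f_i$ Lipschitz, so the numerator of $b_n^i$ equals $\frac1H(h(I_n^i)+h(n-I_n^i)-h(n)) + n + f_0(I_n^i)+f_1(n-I_n^i)-f_i(n)$ with $h(x)=x\log x$; the leading $\frac1H(\cdots)+n$ part is $\bo(n)$ — in fact from the analysis of the mean one sees the linear-in-$n$ contributions cancel up to $\bo(n)$ fluctuations of order $\sqrt{n}$ — while the $f$-terms are $\bo(n)$ by Lipschitz continuity, and dividing by $\sigma_i(n)\sim\sqrt{\sigma^2 n\log n}$ gives $b_n^i = \bo(\sqrt{n}/\sqrt{n\log n}) = \bo(1/\sqrt{\log n}) \to 0$; more care is needed to get $\E|b_n^i|^s\to 0$, which follows from the concentration of $I_n^i$ plus the self-centering/Lipschitz bounds, and this is where Proposition \ref{thEwLip} does its essential work (it is precisely what replaces the missing $o(\sqrt{n\log n})$ expansion of the mean).

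Third, I would run the standard contraction argument on $\zeta_s$ for a fixed $s\in(2,3]$: set $\Delta(n) := \max_{i\in\Sigma}\zeta_s(Y_n^i, {\cal N}(0,1))$, which is finite since $Y_n^i$ and ${\cal N}(0,1)$ share mean $0$ and variance $1$ and have finite $s$-th moments (the $s$-th moments of $Y_n^i$ are bounded uniformly in $n$ — this needs a separate a priori bound, provable by induction via the same recursion together with \eqref{ralph1_app} and $\ell_s$-estimates, or cited from the general toolbox). Applying the ideality properties of $\zeta_s$ (shift-invariance and $(s,+)$-homogeneity) and a standard accompanying-sequence / triangle-inequality step to compare $Y_n^i$ with the random-coefficient version of the limit, one obtains a recursive inequality of the form
\begin{align*}
\Delta(n) \le \E\!\left[ \left(\tfrac{\sigma_0(I_n^i)}{\sigma_i(n)}\right)^{\!s} \Delta(I_n^i) + \left(\tfrac{\sigma_1(n-I_n^i)}{\sigma_i(n)}\right)^{\!s}\Delta(n-I_n^i) \right] + o(1),
\end{align*}
maximized over $i$, where the $o(1)$ absorbs $\zeta_s$ between the true toll and its limit and the coefficient discrepancies (controlled via \eqref{ralph1_app}, the moment bounds, and $b_n^i\to 0$ in $L_s$). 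Since the expected sum of $s$-th powers of the coefficients converges to $p_{i0}^{s/2}+p_{i1}^{s/2} =: \xi_i < 1$ and $I_n^i, n-I_n^i$ are concentrated and bounded by $n$, a routine induction (splitting off small indices where $\Delta$ is bounded, and using $\max_i \xi_i < 1$) gives $\Delta(n)\to 0$. The main obstacle is the toll-term control — establishing $\E|b_n^i|^s \to 0$ with the correct handling of the $\bo(n)$-but-not-$o(\sqrt{n\log n})$ error in $\E[B_n^i]$; this is exactly what the Lipschitz property of Proposition \ref{thEwLip} together with a self-centering argument is designed to overcome, since the Lipschitz bound lets one replace $f_i(I_n^i)+f_{1-i}(n-I_n^i)-f_i(n)$ by its fluctuation around the mean, which is $\bo(\sqrt n)$ by concentration of $I_n^i$, hence $o(\sigma_i(n))$. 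A secondary technical point is verifying the uniform-in-$n$ bound on $\E|Y_n^i|^s$, needed for finiteness of $\zeta_s$ and for \eqref{ralph1_app}; this is handled by a preliminary induction on the same recursive system.
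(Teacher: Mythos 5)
Your proposal follows essentially the same route as the paper's proof: standardize via \eqref{decomp_zero_one}, compare with the accompanying normal mixture using the $(s,+)$-ideality of $\zeta_s$ and the bound \eqref{ralph1_app}, control the toll term by the self-centering argument with the Lipschitz property of Proposition \ref{thEwLip} (giving $\|b_i(n)\|_s=\bo(1/\sqrt{\log n})$), and close with the contraction factor $\max_{i}\{p_{i0}^{s/2}+p_{i1}^{s/2}\}<1$ and a boundedness-plus-limsup step for the system $d_0(n),d_1(n)$. Apart from a small notational slip ($f_0(I_n^i)+f_1(n-I_n^i)$, not $f_i(I_n^i)+f_{1-i}(n-I_n^i)$) and the unnecessary a priori uniform $s$-th moment bound on $Y_n^i$ (the paper only needs $\|Q_n^i\|_s$ uniformly bounded), the argument matches the paper's.
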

\begin{proof}
>From the recurrences (\ref{decomp_zero_one}) and the normalization (\ref{normal_rec})
we obtain for $i\in\Sigma$
 \begin{align}\label{mod_rec}
Y^i_n \stackrel{d}{=}\frac{\sigma_0(I_n^i)}{\sigma_i(n)} Y^0_{I_n^i}+ \frac{\sigma_{1}(n-I_n^i)}{\sigma_i(n)}Y^{1}_{n-I_n^i} + b_i(n), \qquad n\ge 2,
\end{align}
where
\begin{align*}
 b_i(n)= \frac{1}{\sigma_i(n)}\left( n +\nu_0(I_n^i) + \nu_{1}(n-I^i_n)-\nu_i(n) \right),
\end{align*}
and in (\ref{mod_rec}) we have that $(Y^0_0,\ldots,Y^0_n)$, $(Y^1_0,\ldots,Y^1_n)$ and
$(I_n^0,I_n^1)$ are independent.

For independent normal ${\cal N}(0,1)$ distributed random variables
$\No_0,\No_1$ also independent of  $(I_n^0,I_n^1)$ we define
\begin{align} \label{acc_seq_app}
Q^i_n:=\frac{\sigma_0(I_n^i)}{\sigma_i(n)} \No_0 + \frac{\sigma_{1}(n-I_n^i)}{\sigma_i(n)}\No_{1} + b_i(n), \qquad n\ge 2.
\end{align}
Note that we have $\E[Q^i_n]=0$ and $\V(Q^i_n)=1$ for all $n\ge 2$. For the variance, this
is seen by conditioning on $I_n^i$ in (\ref{mod_rec}) and (\ref{acc_seq_app}) and using that
$Y^i_j$ and $\No_i$ have the same variance $1$ for all $j\ge 2$ and that for $j\in\{0,1\}$
the coefficients $\sigma_0(j)/\sigma_i(n)$ are zero, whereas for  $j\in\{n-1,n\}$ the coefficients $\sigma_{1}(n-j)/\sigma_i(n)$ are zero.  Hence, the Zolotarev
distances $\zeta_s(Y^i_n,Q^i_n)$, $\zeta_s(Q^i_n,\No_i)$
and $\zeta_s(Y^i_n,\No_i)$ are
finite for all $n\ge 2$ and $i\in\Sigma$, where we have $2<s\le 3$.

We denote by $\No$ another normal ${\cal N}(0,1)$ distributed random variable. Then we have
\begin{align*}
\zeta_s(Y_n^i,\No) \le \zeta_s(Y_n^i,Q_n^i) + \zeta_s(Q_n^i,\No).
\end{align*}
In the first step we show that $\zeta_s(Q_n^i,\No)\to 0$ as $n\to\infty$ for both $i\in\Sigma$.
Note that $\|Q_n^i\|_s$ is uniformly bounded in $n\ge 2$ and $i\in\Sigma$. Hence, by (\ref{ralph1_app}) there exists a constant $C>0$ such that
 $\zeta_s(Q_n^i,\No) \le C \ell_s(Q_n^i,\No)$. Thus, it is sufficient to show
$\ell_s(Q_n^i,\No)\to 0$. With
\begin{align*}
\No \stackrel{d}{=} \sqrt{p_{i0}} \No_0 + \sqrt{1-p_{i0}} \No_{1}
\end{align*}
we obtain
\begin{align}\label{ineq_1_app}
\ell_s(Q_n^i,\No)\le \left\| \left(\frac{\sigma_0(I_n^i)}{\sigma_i(n)} -\sqrt{p_{i0}}\right)\No_0 \right\|_s
+\left\| \left(\frac{\sigma_{1}(n-I_n^i)}{\sigma_i(n)}-\sqrt{1-p_{i0}}\right)\No_{1} \right\|_s + \|b_i(n)\|_s.
\end{align}
For the first summand in (\ref{ineq_1_app}) we have, by the strong law of large numbers and the variance expansion (\ref{var_asympt})  that $\sigma_0(I_n^i)/\sigma_i(n)\to \sqrt{p_{i0}}$ almost surely. Since $\No_0$ is independent from $I_n^i$ and $\|\No_0\|_s<\infty$ we obtain from dominated convergence that this first summand tends to zero. By similar arguments we also have that the second summand in (\ref{ineq_1_app}) tends to zero.
 The third summand $\|b_i(n)\|_s$ is bounded as
follows: With the notation (\ref{def_fi}) and $h(x)=x\log x$ as in
Lemma \ref{lemBinAs_app} of the Appendix, we have
\begin{align*}
b_i(n)=\frac{1}{\sigma_i(n)}\Big( & \frac{n}{H}\left\{h(I_n^i/n)-\E[h(I_n^i/n)] +
h((n-I_n^i)/n)-\E[h((n-I_n^i)/n)] \right\} \\
&~+f_0(I_n^i)-\E[f_0(I_n^i)] +
f_{1}(n-I_n^i) - \E[f_{1}(n-I_n^i)] \Big)
\end{align*}
With $\sigma_i(n)=\Omega(\sqrt{n\log n})$ and (\ref{ccc_app}) the contributions of all
summands involving the function $h$ are $\bo(1/\sqrt{\log n})$ in the  $L_s$-norm, hence we have
\begin{align*}
\|b_i(n)\|_s \le & \;\|f_0(I_n^i)-\E[f_0(I_n^i)]\|_s + \| f_{1}(n-I_n^i) - \E[f_{1}(n-I_n^i)]\|_s\\
 &~+\bo(1/\sqrt{\log n}), \quad (n\to\infty).
\end{align*}
Furthermore, to bound $\|f_0(I_n^i)-\E[f_0(I_n^i)]\|_s$ we use an independent copy $H_n^i$ of $I_n^i$. Then, by Jensen's inequality for conditional expectations
and the Lipschitz property of $f_i$ in Proposition \ref{thEwLip} (with Lipschitz constant bounded by $C$)
\begin{align}
\|f_0(I_n^i)-\E[f_0(I_n^i)]\|_s &= \|\E[f_0(I_n^i)-f_0(H_n^i)\,|\,I_n^i] \|_s  \nonumber \\
&\le \|f_0(I_n^i)-f_0(H_n^i)\|_s   \nonumber \\
&\le C  \|I_n^i-H_n^i\|_s   \nonumber \\
&\le 2C \|I_n^i-\E[I_n^i]\|_s  \nonumber \\
&= \bo(\sqrt{n}). \label{est_cond_app}
\end{align}
Since $\| f_{1}(n-I_n^i) - \E[f_{1}(n-I_n^i)]\|_s$ is bounded analogously and $\sigma_i(n)=\Omega(\sqrt{n\log n})$ we obtain altogether  as  $n\to\infty$
and for both $i\in\Sigma$.
\begin{align*}
\|b_i(n)\|_s = \bo\left(\frac{1}{\sqrt{\log n}}\right).
\end{align*}
This completes the estimate for the first step $\zeta_s(Q_n^i,\No)\to 0$ as $n\to\infty$.

Now, we denote the distances $d_i(n):=\zeta_s(Y_n^i,\No)$, for $n\ge 2$, and $d_i(0):=d_i(1):=0$  for $i\in\Sigma$. Conditioning on $I_n^i$ and using that $\zeta_s$ is $(s,+)$ ideal we obtain for all $n\ge 2$
\begin{align}
\lefteqn{d_i(n)}  \nonumber \\
&\le \zeta_s(Y_n^i,Q_n^i) + o(1) \nonumber\\
&=\zeta_s\left(\frac{\sigma_0(I_n^i)}{\sigma_i(n)} Y^0_{I_n^i}+ \frac{\sigma_{1}(n-I_n^i)}{\sigma_i(n)}Y^{1}_{n-I_n^i} + b_i(n),
\frac{\sigma_0(I_n^i)}{\sigma_i(n)} \No_0 +
\frac{\sigma_{1}(n-I_n^i)}{\sigma_i(n)}\No_{1} + b_i(n)  \right)+ o(1)\nonumber \\
&\le \sum_{j=0}^n {n \choose j} p_{i0}^j (1-p_{i0})^{n-j} \zeta_s\left(\frac{\sigma_0(j)}{\sigma_i(n)} Y^0_{j}+ \frac{\sigma_{1}(n-j)}{\sigma_i(n)}Y^{1}_{n-j} + {\bf 1}_{\{I_n^i=j\}}b_i(n),   \right. \nonumber\\
& \left. \hspace{4.5cm}
\frac{\sigma_0(j)}{\sigma_i(n)} \No_0 + \frac{\sigma_{1}(n-j)}{\sigma_i(n)}
\No_{1} + {\bf 1}_{\{I_n^i=j\}}b_i(n)  \right)+ o(1)  \nonumber\\
&\le \sum_{j=2}^n {n \choose j} p_{i0}^j (1-p_{i0})^{n-j}\left\{ \left(\frac{\sigma_0(j)}{\sigma_i(n)}\right)^s \zeta_s(Y_j^0,\No_0) + \left(\frac{\sigma_{1}(n-j)}{\sigma_i(n)}\right)^s \zeta_s(Y_{n-j}^{1},\No_{1})\right\} + o(1)  \nonumber\\
&= \E \left[ \left(\frac{\sigma_0(I_n^i)}{\sigma_i(n)}\right)^s d_0(I_n^i) +  \left(\frac{\sigma_{1}(n-I_n^i)}{\sigma_i(n)}\right)^s d_{1}(n-I_n^i)  \right]+ o(1). \label{basic_est_app}
\end{align}
With $d(n):=d_0(n)\vee d_1(n)$ we obtain for both $i\in\Sigma$ that
\begin{align} \label{dist_est_app}
d_i(n)
\le &\; \E \left[{\bf 1}_{\{ 1\le I_n^i \le n-1  \}}\left\{ \left(\frac{\sigma_0(I_n^i)}{\sigma_i(n)}\right)^s +  \left(\frac{\sigma_{1}(n-I_n^i)}{\sigma_i(n)}\right)^s  \right\} \right] \sup_{1\le j\le n-1} d(j)  \\ &~
+ ((1-p_{i0})^n+ p_{i0}^n) d(n) +o(1). \nonumber
\end{align}
With
\begin{align*}
\xi(n)&:= \max_{i\in\Sigma} \E \left[{\bf 1}_{\{ 1\le I_n^i \le n-1  \}}\left\{ \left(\frac{\sigma_0(I_n^i)}{\sigma_i(n)}\right)^s +  \left(\frac{\sigma_{1}(n-I_n^i)}{\sigma_i(n)}\right)^s  \right\} \right],  \\
\varepsilon(n)&:=  \max_{i\in\Sigma}\left\{(1-p_{i0})^n+ p_{i0}^n\right\}
\end{align*}
we obtain by taking the maximum of the right hand sides in (\ref{dist_est_app})
\begin{align} \label{dist_est_2_app}
d(n)
\le  \frac{\xi(n)}{1-\varepsilon(n)} \sup_{1\le j\le n-1} d(j) +o(1).
\end{align}
We have $\varepsilon(n)\to 0$ and, since $s>2$ and $p_{ii}\in(0,1)$ for both $i\in\Sigma$,
\begin{align}\label{limit_cont_app}
\xi &:= \lim_{n\to\infty} \xi(n) = \max_{i\in\Sigma} \left\{ p_{i0}^{s/2} + (1-p_{i0})^{s/2}\right\} <1.
\end{align}
With (\ref{dist_est_2_app}) this implies that $(d(n))_{n\ge 1}$ remains bounded. We denote
$\varrho:=\sup_{n \ge 0} d(n) $ and $\eta:=\limsup_{n\to\infty} d(n)$. Hence, we have $\varrho, \eta < \infty$ and for any $\varepsilon>0$ there exists an $n_0\ge 2$ such that for all $n\ge n_0$ we have $d(n)\le \eta + \varepsilon$. From (\ref{basic_est_app})
we obtain with (\ref{limit_cont_app}) for both $i\in\Sigma$
\begin{align}
d_i(n)
\le &\;\E \left[{\bf 1}_{\{I_n^i <n_0\} \cup \{I_n^i > n-n_0\}} \left\{
\left(\frac{\sigma_0(I_n^i)}{\sigma_i(n)}\right)^s  +
\left(\frac{\sigma_{1}(n-I_n^i)}{\sigma_i(n)}\right)^s   \right\}\right]\varrho\\
&~+\E \left[{\bf 1}_{\{n_0\le I_n^i \le n-n_0\}} \left\{\left(\frac{\sigma_0(I_n^i)}{\sigma_i(n)}\right)^s  +  \left(\frac{\sigma_{1}(n-I_n^i)}{\sigma_i(n)}\right)^s   \right\}\right](\eta+\varepsilon)  +o(1)\\
\le &\; (\xi+o(1))(\eta+\varepsilon) + o(1)
\end{align}
with appropriate $o(1)$ terms. Maximizing over $i\in\Sigma$ this yields
$d(n)\le o(1)+(\xi+o(1))(\eta+\varepsilon)$ and with $n\to\infty$
\begin{align*}
\eta \le \xi (\eta+\varepsilon).
\end{align*}
Since $\varepsilon>0$ can be chosen arbitrarily small we obtain $\eta=0$, i.e.~$\zeta_s(Y_n^i,\No)\to 0$ as $n\to\infty$ for both $i\in \Sigma$.
\end{proof}

\begin{proof}[Proof of Theorem \ref{thm:limit}]
We write
\begin{align*}
\frac{B_n^\mu - \E[B_n^\mu]}{\sqrt{n\log n}}
\stackrel{d}{=}\frac{B_{K_n}^0 - \nu_0(K_n)+ B_{n-K_n}^1- \nu_1(n-K_n)}{\sqrt{n\log n}} + \frac{\nu_0(K_n)+ \nu_1(n-K_n)+n - \E[B_n^\mu]}{\sqrt{n\log n}}.
\end{align*}
By the Lemma of Slutzky it is sufficient to show, as $n\to\infty$,
\begin{align}
\frac{B_{K_n}^0 - \nu_0(K_n)+ B_{n-K_n}^1- \nu_1(n-K_n)}{\sqrt{n\log n}} & \stackrel{d}{\longrightarrow} {\cal N}(0,\sigma^2)  \label{conv_1_app}\\
 \frac{\nu_0(K_n)+ \nu_1(n-K_n)+n - \E[B_n^\mu]}{\sqrt{n\log n}} & \stackrel{\Prob}{\longrightarrow} 0.\label{conv_2_app}
\end{align}
For showing (\ref{conv_1_app}) note that by Proposition \ref{conv_zolo_prop_app} $(B_n^i - \E[B_n^i])/\sqrt{n\log n} \to {\cal N}(0,\sigma^2) $ in distribution for both $i\in\Sigma$.
We set $A_n:= [\mu_0 n -n^{2/3}, \mu_0 n +n^{2/3}] \cap \N_0$ and $A_n^c:=\{0,\ldots,n\}\setminus A_n$. Then by Chernoff's bound (or the central limit theorem) we have $\Prob(K_n \in A_n) \to 1$. For all $x\in\R$ we have
\begin{align*}
\lefteqn{\Prob\left(\frac{B_{K_n}^0 - \nu_0(K_n)+ B_{n-K_n}^1- \nu_1(n-K_n)}{\sqrt{n\log n}} \le x\right)}\\
&= o(1) + \sum_{j\in A_n} \Prob(K_n=j) \Prob\left(\frac{B_{j}^0 - \nu_0(j)}{\sqrt{n\log n}} +  \frac{B_{n-j}^1   - \nu_1(n-j)}{\sqrt{n\log n}} \le x\right).
\end{align*}
For $j\in A_n$ we have $\sqrt{j\log j}/\sqrt{n\log n} \to \sqrt{\mu_0}$ and
$\sqrt{(n-j)\log(n- j)}/\sqrt{n\log n} \to \sqrt{1-\mu_0}$. Hence, we have $(B_{j}^0 - \nu_0(j))/\sqrt{n\log n} \to
{\cal N}(0,\mu_0 \sigma^2)$ and $(B_{n-j}^1   - \nu_1(n-j))/\sqrt{n\log n}\to
{\cal N}(0,(1-\mu_0) \sigma^2)$ in distribution and the two summands are independent. Together, denoting by $N_{0,\sigma^2}$ an  ${\cal N}(0,\sigma^2)$ distributed random variable we obtain
\begin{align*}
\Prob\left(\frac{B_{K_n}^0 - \nu_0(K_n)+ B_{n-K_n}^1- \nu_1(n-K_n)}{\sqrt{n\log n}} \le x\right)
&= o(1) + \sum_{j\in A_n} \Prob(K_n=j) ( \Prob\left(N_{0,\sigma^2} \le x\right) +o(1))\\
&\to \Prob\left(N_{0,\sigma^2} \le x\right),
\end{align*}
where the latter convergence is justified by dominated convergence. This shows (\ref{conv_1_app}).

For (\ref{conv_2_app}) note that (\ref{decomp_initial}) implies
$$\E[B_n^\mu] = \E[\nu_0(K_n)] + \E[\nu_1(n-K_n)]+n.$$
Hence, with the notation (\ref{def_fi}) and $h(x)=x\log x$, $x\in[0,1]$, we have
\begin{align*}
\lefteqn{\frac 1 {\sqrt{n\log n}} \| \nu_0(K_n)+\nu_1(n-K_n)+n- \E[B_n^\mu] \|_3}  \\
&=\frac 1 {\sqrt{n\log n}} \| \nu_0(K_n)-\E[\nu_0(K_n)]+\nu_1(n-K_n)-\E[\nu_1(n-K_n)] \|_3 \\
&\leq \frac 1 {H \sqrt{n\log n}} \| h(K_n)-\E[h(K_n)]+h(n-K_n)-\E[h(n-K_n)] \|_3 \\
&\;\;\;\;~+ \frac 1 {\sqrt{n\log n}} \| f_0(K_n)-\E[f_0(K_n)] \|_3+\frac 1 {\sqrt{n\log n}} \| f_1(n-K_n)-\E[f_1(n-K_n)] \|_3
\end{align*}
An easy calculation reveals (details are given in the appendix, equation (\ref{ccc_app}))
\begin{align*}
\lefteqn{\| h(K_n)-\E[h(K_n)]+h(n-K_n)-\E[h(n-K_n)] \|_3 }\\
&=n\left\|  h\left( \frac{K_n}{n} \right)-\E \left[h\left(\frac{K_n}{n}\right)
\right]+h\left(\frac{n-K_n}{n}\right)-\E\left[ h\left(\frac{n-K_n}{n} \right)
\right] \right\|_3\\
&= \bo\left(n^{\nicefrac{1}{2}}\right).
 \end{align*}
The terms $\| f_0(K_n)-\E[f_0(K_n)] \|_3$ and $\| f_1(n-K_n)-\E[f_1(n-K_n)] \|_3$ are also of the order $\bo(n^{\nicefrac{1}{2}})$ by the argument used in
 (\ref{est_cond_app}).
 Altogether we have
$$\frac 1 {\sqrt{n\log n}} \| \nu_0(K_n)+\nu_1(n-K_n)+n- \E[B_n^\mu] \|_3= \bo\left( \frac{1}{\sqrt{\log n}}\right), $$
which implies (\ref{conv_2_app}).
\end{proof}

\pagebreak

\section*{Appendix}
\subsection*{Asymptotics of the Binomial and Poisson distribution}
The appendix is meant to cover some elementary asymptotic moment calculations of the binomial and Poisson distribution. These calculations were made for the sake of completeness
and may be removed in the published version of this paper.

The following approximations are immediate consequences of the concentration of the binomial distribution. Recall $x\log x=0$ for $x=0$.
\begin{lem}
\label{lemBinAs_app}
Let $p\in(0,1)$, $h(x):=x\log x$ for $x\in[0,1]$ and  $X_{n,p}$ be binomial $B(n,p)$ distributed for  $n\in\N$. Then we have as $n\rightarrow \infty$
\begin{align}
\E\left[ \log \left(\frac{X_{n,p}+1}{n+1} \right) - \log p \right] &= \bo\!\left(n^{-\nicefrac{1}{2}}\right), \label{aaa_app} \\
\left\| h(X_{n,p}/n) - \E\left[h(X_{n,p}/n)\right] \right\|_3& = \bo\left( n^{\nicefrac{-1}{2}}\right),
\label{ccc_app}\\
%\E \left[h(X_{n+1,p}/(n+1))-h(X_{n,p}/n)\right]&=\bo(1/n)\\ \label{ddd_app}
\E[h(X_{n,p}/n)-h(p)]&=\bo(n^{-2/3}). \label{eee_app}
\end{align}
\end{lem}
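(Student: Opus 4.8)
The three estimates share a single engine, the sharp concentration of $X_{n,p}$ about $np$, so the plan is to fix once and for all a constant $\delta\in(0,\min\{p,1-p\})$, set $G:=\{|X_{n,p}-np|\le\delta n\}$, and use three standard facts: a Chernoff bound $\Prob(G^{c})\le e^{-cn}$ for some $c=c(p,\delta)>0$; the second moment $\E(X_{n,p}-np)^{2}=np(1-p)$; and the third absolute central moment $\E|X_{n,p}-np|^{3}=\bo(n^{3/2})$. Each display is then treated by splitting the relevant expectation over $G$ and $G^{c}$.

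For \eqref{ccc_app} I would first pass to a non-centered quantity: with $Z:=h(X_{n,p}/n)$ one has $\|Z-\E Z\|_{3}\le\|Z-h(p)\|_{3}+|\E Z-h(p)|\le 2\|Z-h(p)\|_{3}$ by the triangle and Jensen inequalities, so it suffices to bound $\E|h(X_{n,p}/n)-h(p)|^{3}$. On $G$ the point $X_{n,p}/n$ stays in the compact interval $[p-\delta,p+\delta]\subset(0,1)$, on which $h$ is Lipschitz with some constant $L$, whence $\E\bigl[|h(X_{n,p}/n)-h(p)|^{3}\Ind_{G}\bigr]\le L^{3}\,\E|X_{n,p}/n-p|^{3}=\bo(n^{-3/2})$; on $G^{c}$ the function $h$ is merely bounded on $[0,1]$ (with $h(0)=0$), so $\E\bigl[|h(X_{n,p}/n)-h(p)|^{3}\Ind_{G^{c}}\bigr]=\bo(1)\cdot\Prob(G^{c})=\bo(e^{-cn})$. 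Adding the two and taking cube roots gives \eqref{ccc_app}.

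For \eqref{eee_app} and \eqref{aaa_app} the additional ingredient is the vanishing first moment $\E[X_{n,p}-np]=0$. Write $g(x):=h(x/n)-h(p)$ in the first case and $g(x):=\log\frac{x+1}{n+1}-\log p$ in the second; in both cases $g$ is $C^{2}$ on a neighbourhood of $np$ containing $[(p-\delta)n,(p+\delta)n]$, with $g'(x)=\bo(1/n)$ and $g''(x)=\bo(1/n^{2})$ uniformly there, and with $g(np)=0$ in the first case resp.\ $g(np)=\bo(1/n)$ in the second. A second-order Taylor expansion about $np$ valid on $G$ gives $g(X_{n,p})\Ind_{G}=\bigl(g(np)+g'(np)(X_{n,p}-np)+\tfrac{1}{2} g''(\xi)(X_{n,p}-np)^{2}\bigr)\Ind_{G}$; taking expectations, the linear term contributes $g'(np)\,\E[(X_{n,p}-np)\Ind_{G}]=-g'(np)\,\E[(X_{n,p}-np)\Ind_{G^{c}}]$, which is exponentially small since $\E[|X_{n,p}-np|\Ind_{G^{c}}]\le(np(1-p))^{1/2}\Prob(G^{c})^{1/2}$, the constant term is $\bo(1/n)$, and the quadratic term is at most $\bo(n^{-2})\,\E(X_{n,p}-np)^{2}=\bo(1/n)$; on $G^{c}$ one has $|g(X_{n,p})|=\bo(\log n)$ (the convention $0\log 0=0$ keeps $g$ finite even at $X_{n,p}=0$, where it is $-\log(n+1)$ up to a bounded term), hence $\E[|g(X_{n,p})|\Ind_{G^{c}}]=\bo(\log n)\,e^{-cn}=o(1/n)$. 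Altogether $\E[g(X_{n,p})]=\bo(1/n)$, which is stronger than the asserted $\bo(n^{-1/2})$ and $\bo(n^{-2/3})$.

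I do not expect a genuine obstacle here: the only subtle point is the logarithmic singularity at $0$ of $h$ and of $\log$, which prevents a global Taylor expansion and forces the split at $G^{c}$ — there the integrands are only of size $\bo(\log n)$, harmless against the exponentially small $\Prob(G^{c})$. The exponents $n^{-1/2}$ and $n^{-2/3}$ in the statement are intentionally non-sharp; they are simply the forms used later in Sections~\ref{sec:mean}--\ref{sec:limit}.
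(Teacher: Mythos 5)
Your argument is correct, and for \eqref{aaa_app} and \eqref{eee_app} it takes a genuinely different (and in fact sharper) route than the paper. The paper proves \eqref{aaa_app} by the mean value theorem for $\log$ on $[\varepsilon,1]$ together with a Chernoff bound, which only invokes $\E|X_{n,p}-np|=\bo(\sqrt n)$ and therefore stops at $\bo(n^{-1/2})$; for \eqref{eee_app} it compares $\E[h(X_{n,p}/n)]$ and $h(p)$ each with $p\,\E[\log(X_{n,p}/n)\Ind_{\{X_{n,p}\ge1\}}]$, using $\log(1+x)-x=\bo(x^2)$ and $\log(1+x)=\bo(x)$ on a truncation window of half-width $n^{2/3}$, which is exactly what produces the exponent $2/3$. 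Your single second-order Taylor expansion about $np$ on the linear-width event $G$, combined with $\E[X_{n,p}-np]=0$ to annihilate the linear term (up to the exponentially small correction you correctly control via Cauchy--Schwarz and the Chernoff bound) and with $g''=\bo(n^{-2})$ played against $\E(X_{n,p}-np)^2=\bo(n)$, gives the stronger bound $\bo(1/n)$ for both quantities in one stroke; this unifies the two estimates and makes transparent why the stated exponents are non-sharp. For \eqref{ccc_app} the two proofs are close in spirit but differ in the centering: the paper self-centers with an independent copy of $X_{n,p}$ and then uses the Lipschitz property of $h$ on $[p/2,1]$, whereas you anchor at the deterministic value $h(p)$ via the triangle and Jensen inequalities; both reduce to the third central moment $\E|X_{n,p}-np|^3=\bo(n^{3/2})$ plus a Chernoff-controlled tail, so here your variant buys nothing new but is equally valid.
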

\begin{proof}
Proof of (\ref{aaa_app}): Note that we have for all $\varepsilon\in(0,1)$ by the mean value theorem
$$|\log (x)-\log(y)| \leq \varepsilon^{-1} |x-y|,\quad x,y\in[\varepsilon,1].$$
This yields
\begin{align*}
 &\left|\E\left[ \log \left(\frac{X_{n,p}+1}{n+1} \right) - \log p \right]\right|\\
&\leq\E\left[ \left|\log \left(\frac{X_{n,p}+1}{n+1} \right) - \log p\right|\Ind_{\{X_{n,p}\geq np/2\}} \right]
+\bo \left( \log n \Prob(X_{n,p}< np/2)\right)\\
&\leq \frac 2 p \E\left[\left| \frac{X_{n,p}+1-np-p}{n+1}\right|\right]+\bo \left( \log n \Prob(X_{n,p}< np/2)\right).
\end{align*}
The assertion follows since $\E[|(X_{n,p}-np) /\sqrt {n p (1-p)}|]$ converges to the first absolut moment of the standard
normal distribution and $\log n \Prob(X_{n,p}< np/2)=o(n^{-1/2})$ by Chernoff's bound.\\

\noindent
Proof of (\ref{ccc_app}): First note that $h$ is bounded on $[0,1]$ and that
we have for all $\varepsilon\in(0,1)$
$$|h^\prime (x) | \leq \log (1/\varepsilon)+1,\quad x\in[\varepsilon,1].$$
In particular, we obtain by the mean value theorem that
\begin{align}\label{lip_proof_ccc_app}
|h(x)-h(y)|\leq (\log (1/\varepsilon)+1) |x-y|, \quad x,y\in[\varepsilon,1].
\end{align}
With an independent copy $\widetilde{X}_{n,p}$ of $X_{n,p}$ we obtain by Jensen's inequality and (\ref{lip_proof_ccc_app})
\begin{align*}
&\left\| h(X_{n,p}/n) - \E\left[h(X_{n,p}/n)\right] \right\|_3^3\\
&= \E[(\E[h(X_{n,p}/n)-h(\widetilde{X}_{n,p}/n)|X_{n,p}])^3]\\
&\leq \E[(h(X_{n,p}/n)-h(\widetilde{X}_{n,p}/n))^3]\\
&= \E[(h(X_{n,p}/n)-h(\widetilde{X}_{n,p}/n))^3 \Ind_{\{X_{n,p},\widetilde{X}_{n,p}\in[n p/2, n]\}}] +\bo(\Prob(X_{n,p}\leq np/2))\\
&\leq (\log (2/p)+1)^3\E[(X_{n,p}/n-\widetilde{X}_{n,p}/n)^3]+\bo(\Prob(X_{n,p}\leq np/2))\\
&\leq \left(\frac{ \log (2/p)+1}{\sqrt{n}}\right)^3 (2\|X_{n,p}/\sqrt{n}\|_3)^3+\bo(\Prob(X_{n,p}\leq np/2)).
\end{align*}
The assertion follows by Chernoff's bound on $\Prob(X_{n,p}\leq np/2)$ and $\|X_{n,p}/\sqrt{n}\|_3\rightarrow \|N\|_3$ where
$N$ is $\mathcal{N}(0,p(1-p))$ distributed.\\

%\noindent
%Proof of (\ref{ddd_app}): With the same arguments as for (\ref{ccc_app}) and the coupling $X_{n+1,p}=X_{n,p}+B$ for
%an bernoulli $B(p)$ distributed $B$ that is independent of $X_{n,p}$ we obtain
%\begin{align*}
% &|\E \left[g(X_{n+1,p}/(n+1))-g(X_{n,p}/n)\right]|\\
%&\leq \E\left[|g(X_{n+1,p}/(n+1))-g(X_{n,p}/n)|\Ind_{\{X_{n+1,p}/(n+1), X_{n,p}/n \geq p/2\}}\right]
%+\bo(\Prob(X_{n,p}\leq np/2))\\
%&\leq (\log(2/p)+1)\E\left[\left|\frac{X_{n,p}+B} {n+1} - \frac {X_{n,p}} n \right|\right]+\bo(\Prob(X_{n,p}\leq np/2))\\
%&\leq (\log(2/p)+1) \left( \frac 1 {n+1} + \frac {1}{n(n+1)} \E[X_{n,p}]\right)+\bo(\Prob(X_{n,p}\leq np/2)).
%\end{align*}
%Once again, the assertion follows by Chernoff's bound on $\Prob(X_{n,p}\leq np/2)$.

\noindent
Proof of (\ref{eee_app}): It is sufficient to show that
\begin{enumerate}
 \item{$h(p)-p\E[\log (X_{n,p}/n) \Ind_{\{X_{n,p}\geq 1\}}]=\bo(n^{-2/3})$,}
\item{$\E[h(X_{n,p}/n)-p\log(X_{n,p}/n)\Ind_{\{X_{n,p}\geq 1\}}]=\bo(n^{-2/3})$.}
\end{enumerate}
For the first part note that we have
\begin{align*}
 &\left|h(p)-p\E[\log (X_{n,p}/n) \Ind_{\{X_{n,p}\geq 1\}}]\right| \\
&= p\left| \E\left[\log\left( \frac {X_{n,p}} {np}\right)\Ind_{\{X_{n,p}\geq 1\}}\right] \right|+\bo\left((1-p)^n\right)\\
&=p\left|\E\left[\left(\log\left( 1+\frac {X_{n,p}-np} {np}\right)-\frac{X_{n,p}-np} {np}\right)\Ind_{\{X_{n,p}\geq 1\}}\right]\right|
+\bo\left((1-p)^n\right)\\
&\leq p\left|\E\left[\left(\log\left( 1+\frac {X_{n,p}-np} {np}\right)-\frac{X_{n,p}-np} {np}\right)\Ind_{\{|X_{n,p}-np|\leq n^{2/3}\}}\right]\right| \\
&\quad+(\log(np)+1/p)\Prob(|X_{n,p}-np|> n^{2/3})+\bo\left((1-p)^n\right).
\end{align*}
Since we have $\log(1+x)-x = \bo(x^2)$ for $x\rightarrow 0$ and $\Prob(|X_{n,p}-np|> n^{2/3})=o(n^{-1})$ by Chernoff's bound,
we may conclude that
$$h(p)-p\E[\log (X_{n,p}/n) \Ind_{\{X_{n,p}\geq 1\}}]=\bo(n^{-2/3}).$$
In order to obtain the second bound, note that
\begin{align*}
 &\E[h(X_{n,p}/n)-p\log(X_{n,p}/n)\Ind_{\{X_{n,p}\geq 1\}}]\\
&=\E\left[\left(h(X_{n,p}/n)-p\log(X_{n,p}/n)\right)\Ind_{\{X_{n,p}\geq 1\}}\right]+\bo\left((1-p)^n\right)\\
&=\frac 1 {\sqrt n} \E\left[\frac {X_{n,p}-np} {\sqrt n} \log\left(\frac {X_{n,p}} n \right) \Ind_{\{X_{n,p}\geq 1\}}\right]
+\bo\left((1-p)^n\right)\\
&=\frac 1 {\sqrt n} \E\left[\frac {X_{n,p}-np} {\sqrt n} \log\left(\frac {X_{n,p}} n \right) \Ind_{\{|X_{n,p}-np|\leq n^{2/3}\}}\right]
+o\left(n^{-2/3}\right)\\
&=\frac 1 {\sqrt n} \E\left[\frac {X_{n,p}-np} {\sqrt n} \log(p) \Ind_{\{|X_{n,p}-np|\leq n^{2/3}\}}\right]\\
&\quad +\frac 1 {\sqrt n} \E\left[\frac {X_{n,p}-np} {\sqrt n} \log\left(1+\frac{X_{n,p}-np} {np}\right) \Ind_{\{|X_{n,p}-np|\leq n^{2/3}\}}\right]
+o\left(n^{-2/3}\right).
\end{align*}
Since $\log(1+x)=\bo(x)$ as $x\rightarrow 0$ and $\E[|(X_{n,p}-np)/\sqrt n|]$ converges to the first absolute moment of the
$\mathcal{N}(0,p(1-p))$ distribution, we obtain for the second summand
$$\frac 1 {\sqrt n} \E\left[\frac {X_{n,p}-np} {\sqrt n} \log\left(1+\frac{X_{n,p}-np} {np}\right) \Ind_{\{|X_{n,p}-np|\leq n^{2/3}\}}\right]
=\bo(n^{-5/6}).$$
For the first summand note that $\E[( X_{n,p}-np)/ \sqrt n]=0$ which implies
\begin{align*}
&\frac 1 {\sqrt n} \E\left[\frac {X_{n,p}-np} {\sqrt n} \log(p) \Ind_{\{|X_{n,p}-np|\leq n^{2/3}\}}\right]\\
&=-\frac 1 {\sqrt n} \E\left[\frac {X_{n,p}-np} {\sqrt n} \log(p) \Ind_{\{|X_{n,p}-np|> n^{2/3}\}}\right]\\
&=\bo(\Prob(|X_{n,p}-np|> n^{2/3}))\\
&=o(n^{-2/3}).
\end{align*}
Hence, we obtain $\E[h(X_{n,p}/n)-p\log(X_{n,p}/n)\Ind_{\{X_{n,p}\geq 1\}}]=\bo(n^{-2/3})$ which combined with the
first result yields the assertion.
\end{proof}

The next Lemma provides asymptotic results for the poisson distribution that are needed for the analysis of the variance:
\begin{lem}\label{lemPoisAs}
 For $\lambda>0$ let $N_\lambda$ be Poisson($\lambda$) distributed.
Then we have for all $\alpha,\beta>0$ as $\lambda\rightarrow\infty$
\begin{align*}
\E[N_\lambda^\alpha]&=\bo(\lambda^{\alpha}),\\
 \E\left[N_\lambda^\alpha (\log N_\lambda)^\beta\right]&=\bo\left(\lambda^\alpha(\log \lambda)^\beta\right).
\end{align*}
\end{lem}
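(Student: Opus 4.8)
The plan is to prove the two estimates in turn, the second relying on the first. For $\E[N_\lambda^\alpha]=\bo(\lambda^\alpha)$, I would first treat an integer exponent $k\in\N$: the ordinary moments of a Poisson variable are the Touchard polynomials, so $\E[N_\lambda^k]=\sum_{j=0}^k S(k,j)\lambda^j$ with $S(k,j)$ the Stirling numbers of the second kind, a polynomial in $\lambda$ of degree $k$; hence $\E[N_\lambda^k]=\bo(\lambda^k)$. (Equivalently one may start from the factorial moments $\E[N_\lambda(N_\lambda-1)\cdots(N_\lambda-k+1)]=\lambda^k$ and proceed by an easy induction.) For a general real $\alpha>0$ I would set $m:=\lceil\alpha\rceil\ge\alpha$; since $x\mapsto x^{\alpha/m}$ is concave on $[0,\infty)$ (because $0<\alpha/m\le 1$), Jensen's inequality gives
\begin{align*}
\E[N_\lambda^\alpha]=\E\left[(N_\lambda^m)^{\alpha/m}\right]\le \left(\E[N_\lambda^m]\right)^{\alpha/m}=\bo\left(\lambda^{m\cdot\alpha/m}\right)=\bo(\lambda^\alpha),
\end{align*}
which is the first assertion.

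For the second estimate I would split the expectation according to the size of $N_\lambda$. The summands with $N_\lambda\in\{0,1\}$ contribute nothing, since $\log 1=0$ and, with the usual convention, $0^\alpha(\log 0)^\beta:=0$. On the event $\{2\le N_\lambda\le 2\lambda\}$ one has $\log N_\lambda\le\log(2\lambda)$, whence, using the first part,
\begin{align*}
\E\left[N_\lambda^\alpha(\log N_\lambda)^\beta\,\Ind_{\{2\le N_\lambda\le 2\lambda\}}\right]\le(\log 2\lambda)^\beta\,\E[N_\lambda^\alpha]=\bo\left(\lambda^\alpha(\log\lambda)^\beta\right).
\end{align*}
On the tail event $\{N_\lambda>2\lambda\}$ I would use the crude bound $(\log x)^\beta\le C_\beta\,x$ for $x\ge 1$ together with the Cauchy--Schwarz inequality,
\begin{align*}
\E\left[N_\lambda^\alpha(\log N_\lambda)^\beta\,\Ind_{\{N_\lambda>2\lambda\}}\right]\le C_\beta\,\E\left[N_\lambda^{\alpha+1}\Ind_{\{N_\lambda>2\lambda\}}\right]\le C_\beta\left(\E[N_\lambda^{2\alpha+2}]\right)^{1/2}\left(\Prob(N_\lambda>2\lambda)\right)^{1/2}.
\end{align*}
Here $\E[N_\lambda^{2\alpha+2}]=\bo(\lambda^{2\alpha+2})$ by the first part, while a standard Chernoff bound for the Poisson distribution yields $\Prob(N_\lambda>2\lambda)\le e^{-c\lambda}$ for some constant $c>0$; thus this tail term is $\bo(\lambda^{\alpha+1}e^{-c\lambda/2})=o(1)$, which is certainly $\bo(\lambda^\alpha(\log\lambda)^\beta)$. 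Adding the three contributions gives $\E[N_\lambda^\alpha(\log N_\lambda)^\beta]=\bo(\lambda^\alpha(\log\lambda)^\beta)$, as claimed.

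There is no real obstacle here: this is an elementary concentration-and-moments computation, included only for completeness. The two points that call for a little care are the passage from integer to real exponents, handled by the Jensen step above, and checking that the tail contribution in the second estimate is genuinely negligible, which follows because the exponential decay of $\Prob(N_\lambda>2\lambda)$ dominates every polynomial factor. The convention fixing the value of $N_\lambda^\alpha(\log N_\lambda)^\beta$ at $N_\lambda\in\{0,1\}$ should be stated explicitly so that the quantity is well defined.
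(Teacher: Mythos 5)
Your proof is correct, and the first estimate is handled essentially as in the paper: integer moments via factorial moments, then Jensen's inequality with the concave map $x\mapsto x^{\alpha/\lceil\alpha\rceil}$; your single Jensen step even subsumes the paper's separate treatment of the cases $\alpha\in(0,1)$ and $\alpha\in(1,\infty)\setminus\N$. For the second estimate you follow the same bulk--tail strategy but with a different cut: you split at $2\lambda$ and control the tail by Cauchy--Schwarz together with a Poisson Chernoff bound $\Prob(N_\lambda>2\lambda)\le e^{-c\lambda}$, after absorbing the logarithm via $(\log x)^\beta\le C_\beta x$; the paper instead splits at the threshold $\lambda^{\alpha+1}$, uses $n^\alpha(\log n)^\beta\le C n^{3\alpha/2}$ on the tail, and then needs only Markov's inequality $\Prob(N_\lambda>\lambda^{\alpha+1})\le\lambda^{-\alpha}$, so no large-deviation estimate enters at all. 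Both routes give $\bo(\lambda^\alpha(\log\lambda)^\beta)$: your version buys an exponentially small tail contribution at the cost of invoking the (standard, and correctly stated) Chernoff bound, while the paper's seemingly artificial threshold is chosen exactly so that the crudest tail bound suffices. Your explicit remark that the value of $N_\lambda^\alpha(\log N_\lambda)^\beta$ at $N_\lambda\in\{0,1\}$ must be fixed by convention is a sensible addition which the paper leaves implicit.
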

\begin{proof}
 We start with the analysis of $\E[N_\lambda^\alpha]$: For $\alpha\in\N$ the assertion follows by induction and the fact that
for every $n\in\N_0$ we have
$$\E\left[\prod_{i=0}^n (N_\lambda -i)\right]=\lambda^{n+1}.$$
For $\alpha\in(0,1)$ note that $x\mapsto x^{\alpha}$ is concave on $[0,\infty)$ and therefore,
by Jensen's inequality
\begin{align*}
 \E[N_\lambda^\alpha]\leq \left(\E[N_\lambda]\right)^\alpha =\lambda^\alpha.
\end{align*}
Finally, for $\alpha\in(1,\infty) \cap \N^c$ we have that $x\mapsto x^{\alpha / \lceil\alpha \rceil}$ is concave on $[0,\infty)$
which yields
\begin{align*}
 \E[N_\lambda^\alpha] \leq (\E[N_\lambda^{\lceil \alpha \rceil}])^{\alpha/ \lceil \alpha \rceil}
\end{align*}
and the assertion follows by the results for $\alpha\in\N$.\\

\noindent
For the second part of the proof we use the following decomposition
\begin{align*}
  \E\left[N_\lambda^\alpha (\log N_\lambda)^\beta\right]&= \E\left[N_\lambda^\alpha (\log N_\lambda)^\beta\Ind_{\{N_\lambda\leq \lambda^{\alpha+1}\}}\right]
+ \E\left[N_\lambda^\alpha (\log N_\lambda)^\beta\Ind_{\{N_\lambda>\lambda^{\alpha+1}\}}\right]\\
&\leq (\alpha+1)^\beta (\log \lambda)^\beta\E[N_\lambda^\alpha]+\E\left[N_\lambda^\alpha (\log N_\lambda)^\beta\Ind_{\{N_\lambda>\lambda^{\alpha+1}\}}\right]\\
&=\bo(\lambda^\alpha(\log \lambda)^\beta)+\E\left[N_\lambda^\alpha (\log N_\lambda)^\beta\Ind_{\{N_\lambda>\lambda^{\alpha+1}\}}\right],
\end{align*}
where the last step holds since $\E[N_\lambda^\alpha]=\bo(\lambda^\alpha)$.
Hence, it is sufficient to show that
$$\E\left[N_\lambda^\alpha (\log N_\lambda)^\beta\Ind_{\{N_\lambda>\lambda^{\alpha+1}\}}\right]=\bo(\lambda^\alpha).$$
Since we have $n^\alpha (\log n )^\beta \leq C_{\alpha\beta} n^{3\alpha/2}$ for a sufficiently large constant $C_{\alpha\beta}$
and all $n\in\N_0$, we obtain
\begin{align*}
 \E\left[N_\lambda^\alpha (\log N_\lambda)^\beta\Ind_{\{N_\lambda>\lambda^{\alpha+1}\}}\right]&\leq
C_{\alpha\beta}\E\left[N_\lambda^{3\alpha/2}\Ind_{\{N_\lambda>\lambda^{\alpha+1}\}}\right]\\
&\leq C_{\alpha\beta}\sqrt{\E[N_\lambda^{3\alpha}] \Prob(N_\lambda>\lambda^{\alpha+1})}
\end{align*}
where the last inequality holds by the Cauchy-Schwarz inequality. Together with the previous result $\E[N_\lambda^{3\alpha}]=\bo(\lambda^{3\alpha})$
and Markov's inequality this yields
$$\E\left[N_\lambda^\alpha (\log N_\lambda)^\beta\Ind_{\{N_\lambda>\lambda^{\alpha+1}\}}\right]=\bo(\lambda^{\alpha})$$
and the assertion follows.
\end{proof}


\begin{thebibliography}{99}
\bibitem{bahw98}
Bai, Z.-D., Chao, C.-C., Hwang, H.-K. and Liang, W.-Q. (1998)
On the variance of the number of maxima in random vectors and its applications.
{\em Ann. Appl. Probab.} {\bf 8}, 886--895.

\bibitem{CeVa15}
Cesaratto, E.~and Vall{\'e}e, B. (2015)
Gaussian Distribution of Trie Depth for Strongly Tame Sources.
{\em Combinatorics, Probability and Computing} {\bf 24}, 54--103.


\bibitem{clflva01}
Cl{\'e}ment, J., Flajolet, P. and Vall{\'e}e, B. (2001)
Dynamical sources in information theory: a general analysis of trie structures.
Average-case analysis of algorithms (Princeton, NJ, 1998).
{\em Algorithmica} {\bf 29}, 307--369.

\bibitem{de-la-Briandais59} de la Briandais, R. (1959) File searching using
variable length keys, in \emph{Proceedings of the AFIPS Spring Joint
Computer Conference}. AFIPS Press, Reston, Va., 295-298.

\bibitem{de84}
Devroye, L. (1984)
A probabilistic analysis of the height of tries and of the complexity of triesort.
{\em Acta Inform.} {\bf 21}, 229--237.

\bibitem{de02}
Devroye, L. (2002)
Laws of large numbers and tail inequalities for random tries and PATRICIA trees.
Probabilistic methods in combinatorics and combinatorial optimization.
{\em J. Comput. Appl. Math.} {\bf 142}, 27--37.


\bibitem{de05}
Devroye, L. (2005)
Universal asymptotics for random tries and PATRICIA trees.
{\em Algorithmica} {\bf 42}, 11--29.


\bibitem{DrJaNe08}
Drmota, M., Janson, S. and Neininger, R. (2008)
 A functional limit theorem for the profile of search trees.
{\em Ann. Appl. Probab.} {\bf 18}, 288--333.

\bibitem{flrova10}
Flajolet, Ph., Roux, M. and Vall{\'e}e, B. (2010)
Digital trees and memoryless sources: from arithmetics to analysis. {\em 21st International Meeting on Probabilistic, Combinatorial, and Asymptotic Methods in the Analysis of Algorithms (AofA'10)},  Discrete Math. Theor. Comput. Sci. Proc., AM, Assoc. Discrete Math. Theor. Comput. Sci., Nancy, 233--260.

\bibitem{fuhwza10}
Fuchs, M., Hwang, H.-K. and Zacharovas, V. (2010)
Asymptotic variance of random symmetric digital search trees.
{\em Discrete Math. Theor. Comput. Sci.} {\bf 12}, 103--165.



\bibitem{fuhwza14}
Fuchs, M., Hwang, H.-K. and Zacharovas, V. (2014)
An analytic approach to the asymptotic variance of trie statistics and related structures.
{\em Theoret. Comput. Sci.} {\bf 527}, 1--36.

\bibitem{HuVa14}
Hun, K.~and Vall{\'e}e, B. (2014)
Typical depth of a digital search tree built on a general source.
{\em A{NALCO}14---{M}eeting on {A}nalytic {A}lgorithmics and {C}ombinatorics}, 1--15.


\bibitem{hw03}
Hwang, H.-K. (2003)
Second phase changes in random $m$-ary search trees and generalized quicksort: convergence rates.  {\em Ann. Probab.} {\bf 31}, 609--629.

%\bibitem{jare88a}
%Jacquet, Ph. and R{\'e}gnier, M.  (1988) Normal limiting distribution of the size and the external
%path length of tries. Technical Report RR-0827, INRIA-Rocquencourt.

\bibitem{jare88}
Jacquet, Ph. and R{\'e}gnier, M.  (1988) Normal limiting distribution of the size of tries. {\em Performance '87 (Brussels, 1987)}, 209--223, {\em North-Holland, Amsterdam}.

%\bibitem{jasz89}
%Jacquet, Ph. and Szpankowski, W.  (1989) Analysis of Digital Tries with Markovian Dependency. {\em Computer Science Technical Reports.} Report 89-906,  Purdue University. Available via {\tt http://docs.lib.purdue.edu/cstech/772}

\bibitem{jasz91}
Jacquet, Ph. and Szpankowski, W.  (1991) Analysis of digital tries with Markovian dependency {\em IEEE Trans. Information Theory}, {\bf 37}, 1470--1475.


\bibitem{jasz95}
Jacquet, Ph. and Szpankowski, W.  (1995) Asymptotic behavior of the Lempel-Ziv parsing scheme and [in] digital search trees. Special volume on mathematical analysis of algorithms. {\em Theoret. Comput. Sci.} {\bf 144}, 161--197.

\bibitem{js98}
Jacquet, Ph. and Szpankowski, W.  (1998)
Analytical Depoissonization and Its Applications, {\it Theoretical
Computer Science}, 201, 1--62.

\bibitem{jaszta01}
Jacquet, P.,  Szpankowski, W. and Tang, J. (2001) Average profile of the Lempel-Ziv parsing scheme for a Markovian source. {\em Algorithmica} {\bf 31}, 318--360.

\bibitem{JaNe08}
Janson, S. and Neininger, R. (2008)
The size of random fragmentation trees.
{\em Probab. Theory Related Fields} {\bf 142},  399--442.

\bibitem{kiprsz89}
Kirschenhofer, P., Prodinger, H. and Szpankowski, W.  (1989) On the variance of the external path length in a symmetric digital trie. Combinatorics and complexity (Chicago, IL, 1987). {\em Discrete Appl. Math.} {\bf 25}, 129--143.

%\bibitem{kiprsz94}
%Kirschenhofer, P., Prodinger, H. and Szpankowski, W.  (1994) Digital search trees again revisited: the internal path length perspective. {\em SIAM J. Comput.} {\bf 23}, 598--616.

\bibitem{kps96}
Kirschenhofer, P., Prodinger, H. and Szpankowski, W.  (1996)
Analysis of a Splitting Process Arising in Probabilistic Counting and
Other Related Algorithms, {\it Random Structures \& Algorithms}, 9,
379--401.

\bibitem{Knuth98}
Knuth, D.E. (1998) \emph{The Art of Computer Programming,
Volume III: Sorting and Searching}, Second edition, Addison Wesley,
Reading, MA.

\bibitem{Leckey2015}
Leckey. K. (2015)
{\em Probabilistic Analysis of Radix Algorithms on Markov Sources.}
Ph.D.~dissertation, submitted at the Goethe University Frankfurt a.M. Available via {\tt http:// www.math.uni-frankfurt.de/$\sim$leckey/Dissertation.pdf}

\bibitem{LeNeSz13}
Leckey, K., Neininger, R. and Szpankowski, W. (2013) Towards More Realistic Probabilistic Models
for Data Structures: The External Path Length in Tries under the Markov Model.
{\em Proceedings ACM-SIAM Symp. Disc. Algo. (SODA)}, 877--886.

\bibitem{Mahmoud92}
Mahmoud, H.M.  (1992) \emph{Evolution of Random Search
Trees}, John Wiley \&\ Sons, New York.


\bibitem{Ne01}
Neininger, R. (2001)
On a multivariate contraction method for random recursive structures with applications to Quicksort.
Analysis of algorithms (Krynica Morska, 2000).
{\em Random Structures Algorithms} {\bf 19}, 498--524.


\bibitem{NeRu04}
Neininger, R. and R\"uschendorf, L. (2004)
A general limit theorem for recursive algorithms and combinatorial
    structures.
{\em Ann. Appl. Probab.} {\bf 14}, 378--418.

\bibitem{NeRu04b}
Neininger, R. and R\"uschendorf, L. (2004)
On the contraction method with degenerate limit equation.
{\em Ann. Probab.} {\bf 32}, 2838--2856.

\bibitem{NeSu12}
Neininger, R. and Sulzbach, H. (2015)
On a functional contraction method.
{\em Ann. Probab.}, to appear. Preprint available via \url{http://arxiv.org/abs/1202.1370}



\bibitem{RaRu95}
Rachev, S.T. and R\"uschendorf, L. (1995)
Probability metrics and recursive algorithms.
{\em Adv. in Appl. Probab.} {\bf 27}, 770--799.


%\bibitem{rajasz93}
%Rais, B., Jacquet, P. und  Szpankowski, W. (1993) Limiting distribution for the
%depth in PATRICIA tries. {\em SIAM J. Discrete Math.} {\bf 6},
%197--213.


%\bibitem{pollard}
%Pollard, D. (1984)
%Convergence of stochastic processes.
%Springer Series in Statistics. {\em Springer-Verlag, New York}.


\bibitem{Ro91}
R\"osler, U. (1991)
A limit theorem for ``Quicksort''. {\em RAIRO Inform. Th\'eor. Appl.} {\bf 25}, 85--100.

\bibitem{Ro92}
R\"osler, U. (1992)
A fixed point theorem for distributions.
{\em Stochastic Process. Appl.} {\bf 42}, 195--214.

\bibitem{Ro99}
R\"osler, U. (1999)
On the analysis of stochastic divide and conquer algorithms.
Average-case analysis of algorithms (Princeton, NJ, 1998).
{\em Algorithmica} {\bf 29}, 238--261.

\bibitem{RoRu01}
R\"osler, U. and R\"uschendorf, L. (2001) The contraction method for recursive
algorithms. {\em Algorithmica} {\bf 29}, 3--33.

\bibitem{Schach95}
Schachinger, W. (1995) On the variance of a class of inductive valuations of data structures for
digital search. {\em Theoret. Comput. Sci.} {\bf 144}, 251--275. Special volume on mathematical
analysis of algorithms.


\bibitem{spa-book}
Szpankowski, W. (2001)
{\it Average Case Analysis of Algorithms on Sequences},
John Wiley, New York.

\bibitem{vallee2001}
Vall{\'e}e, B. (2001)
Dynamical sources in information theory: fundamental intervals and word prefixes,
{\em Algorithmica} {\bf 1-2}, 262--306.

\bibitem{zo76}
Zolotarev, V.~M.  (1976)
Approximation of the distributions of sums of independent
random variables with values in infinite-dimensional
spaces. (Russian.)
{\em Teor. Veroyatnost. i Primenen.} {\bf 21}, 741--758.
Erratum {\em ibid} {\bf 22} (1977),  901.
English transl.
{\em Theory Probab. Appl.} {\bf 21}, 721--737;
{\em ibid.}
 {\bf 22}, 881.

\bibitem{zo77}
Zolotarev, V.~M.  (1977)
Ideal metrics in the problem of
approximating the distributions of sums of independent random
variables. (Russian.)
{\em Teor. Veroyatnost. i Primenen.}
{\bf 22},  449--465.
English transl.
{\em Theory Probab. Appl.}
{\bf 22},   433--449.


%\bibitem{ZoloPse}
%Zolotarev, V.M. (1978)
%Pseudomoments.
%{\em Teor. Verojatnost. i Primenen.} {\bf 23}, 284--294.

\end{thebibliography}
\end{document}